\documentclass[11pt]{amsart}
\usepackage[utf8]{inputenc}

\usepackage{amssymb, mathtools}
\usepackage[margin=1in]{geometry}
\usepackage{mathrsfs}
\usepackage{hyperref}
\usepackage{textgreek}

\newtheorem{theorem}{Theorem}[section]
\newtheorem{proposition}[theorem]{Proposition}
\newtheorem{corollary}[theorem]{Corollary}
\newtheorem{lemma}[theorem]{Lemma}
\newtheorem{question}[theorem]{Question}

\theoremstyle{definition}
\newtheorem{definition}[theorem]{Definition}

\allowdisplaybreaks

\begin{document}

\title{Equidistribution in shrinking sets for arithmetic spherical harmonics}
\author{Maximiliano Sanchez Garza}
\date{}

\begin{abstract}
We study a variant of the equidistribution of mass conjecture on the sphere posed by Böcherer, Sarnak, and Schulze-Pillot: quantum unique ergodicity in shrinking sets. Conditionally on the generalized Lindelöf hypothesis, we show that quantum unique ergodicity holds on every shrinking spherical cap whose radius is considerably larger than the Planck scale, and that it holds on almost every shrinking spherical cap whose radius is larger than the Planck scale. Additionally, conditionally on GLH, we provide explicit upper bounds for the $1$-Wasserstein distance and the spherical cap discrepancy between the involved measures.
\end{abstract}

\maketitle

\section{Introduction}\label{Introduction}

Let $(M,g)$ be a compact and connected Riemannian manifold and let $\{\phi_n\}$ be an orthonormal basis of $L^2(M)$ consisting of Laplacian eigenfunctions. Let $\mu$ be the associated measure to $g$ on $M$, normalized so that it is a probability measure. By definition, we know that $|\phi_n|^2 \: \mathrm{d}\mu$ is a probability measure on $M$. One may ask how this probability measure behaves when $n\rightarrow \infty$ (or, equivalently, as the Laplacian eigenvalue goes to infinity).

We say that the basis $\{\phi_n\}$ is \textit{quantum ergodic} if there exists a subsequence $\{\phi_{m_n}\}$ of $\{\phi_n\}$ of density $1$ such that the sequence of probability measures $\{|\phi_{m_n}|^2 \: \mathrm{d}\mu\}$ converges in distribution to the probability measure $\mathrm{d}\mu$; that is, $\{\phi_n\}$ is quantum ergodic if there exists a subsequence $\{\phi_{m_n}\}$ of $\{\phi_n\}$ of density $1$ such that for every continuity set\footnote{A subset $B\subset M$ is a \textit{continuity set} if it is measurable and its boundary has measure $0$.} $B\subset M$, we have $$ \int_{B} |\phi_{m_n}|^2 \: \mathrm{d}\mu \longrightarrow \mathrm{vol}(B)$$ as $n\rightarrow \infty$. We say that $\{\phi_n\}$ is \textit{quantum unique ergodic} (or that it satisfies QUE) if for every continuity set $B\subset M$, we have 
\begin{equation}
    \int_{B} |\phi_{n}|^2 \: \mathrm{d}\mu \longrightarrow \mathrm{vol}(B) \label{QUEDef}
\end{equation}
as $n\rightarrow \infty$.

The previous notion of QUE is known as \textit{quantum unique ergodicity in configuration space}. There is a stronger version of QUE, called \textit{quantum unique ergodicity in phase space}, where one instead asks for the associated microlocal lifts of these measures to the unit cotangent bundle $S^{*}M$ to equidistribute with respect to the Liouville measure. Here, we will only concern ourselves with QUE in configuration space; for more on QUE in phase space, see \cite{SarnakProgressQUE}.

A foundational result in this context is due to Shnirel'man, Colin de Verdière, and Zelditch (see \cite{ShnirelmanQE}, \cite{deVerdiere85}, \cite{ZelditchQE}, \cite{Shnirelman1993}), which states that if the geodesic flow in $S^{*}M$ is ergodic, then \textit{any} orthonormal basis of $L^2(M)$ is quantum ergodic. However, Rudnick and Sarnak in \cite{RudnickSarnakQUE} conjectured that, in the case that $M$ is a compact Riemannian manifold of negative curvature, any orthonormal basis should be quantum unique ergodic. In the case of compact hyperbolic surfaces arising from quaternion algebras and the orthonormal basis consisting of eigenfunctions of the Laplacian operator and all the Hecke operators, this conjecture was proven by Watson in \cite[\S 4.2]{WatsonPhD} assuming the generalized Lindelöf hypothesis (GLH) and by Lindenstrauss \cite[Thm.~1.4]{LindenstraussQUE} unconditionally.

Even though this discussion is in the context of compact Riemannian manifolds, similar conjectures can be made in case that the manifold is noncompact but has finite volume. In particular, for $M=\mathrm{SL}_{2}(\mathbb{Z}) \backslash \mathbb{H}$, one has to consider a continuous spectrum and a discrete spectrum of the Laplacian operator. It is known that the continuous spectrum satisfies a version of quantum unique ergodicity by work of Jakobson in \cite{JakobsonQUEEisenstein} and of Luo and Sarnak in \cite{LuoSarnakQUE}. For the discrete spectrum, the basis of Hecke--Maass cusp forms is known to be quantum unique ergodic conditionally to GLH via work of Watson in \cite[\S 4.2]{WatsonPhD}. Later, Lindenstrauss in \cite{LindenstraussQUE} almost showed the result unconditionally, but could not eliminate the possibility of escape of mass at the cusp at infinity. This possibility was eliminated by Soundararajan in \cite{SoundQUE}, so that the basis of Hecke--Maass cusp forms was shown unconditionally to be quantum unique ergodic. This version of QUE is often referred to as \textit{arithmetic quantum unique ergodicity}. 

There are three remarks in order. The first one, as Soundararajan mentions in \cite[pp.~1530--1531]{SoundQUE}, is that his proof of arithmetic QUE works not only for $\mathrm{SL}_2(\mathbb{Z})$, but for \textit{any} congruence subgroup. Hence, arithmetic QUE is known for any noncompact arithmetic quotient of $\mathbb{H}$. The second remark is that, since it is believed that all the Laplacian eigenvalues in $\mathrm{SL}_{2}(\mathbb{Z}) \backslash \mathbb{H}$ are simple, the additional requirement on the Laplacian eigenfunctions to be eigenfunctions of all the Hecke operators would be a vacuous condition and we would obtain QUE automatically in this case. The third remark is that both Lindenstrauss' and Soundararajan's results are only qualitative, i.e.~they do not quantify the rate of convergence of \eqref{QUEDef}. However, the proof of Watson in \cite{WatsonPhD} shows that the optimal rates of convergence follow from GLH. For a more extensive discussion on QUE and its arithmetic version, see \cite{SarnakProgressQUE}.

\subsection{Quantum ergodicity on the sphere}

Our main focus for the rest of this paper is the $2$-sphere, $M=S^2$. We give it the round metric and associated measure, which gives it the structure of a Riemannian manifold. Let $\Delta$ be its Laplace operator. We know that $L^2(S^2)$ has a complete spectral resolution with respect to $\Delta$. Moreover, this spectral resolution is completely explicit in terms of spherical harmonics; see \cite[Thm.~2.9]{SpectrumLaplacian}, \cite[p.~140]{SteinWeiss}, \cite[Thm.~2.34, Prop.~3.5]{SpheHar}. More explicitly, the $\ell$-th Laplacian eigenvalue is $\lambda_{\ell} = \ell(\ell+1)$ and its eigenspace, denoted by $H_{\ell}$, is the space of harmonic homogeneous polynomials in three variables of degree $\ell$ restricted to the sphere $S^2$, which are called \textit{spherical harmonics}. Whenever $\phi$ is a spherical harmonic, we will denote its degree by $\ell_{\phi}$ (or just $\ell$ if there is no confusion).

First, since the geodesic flow of the sphere is not ergodic, the aforementioned results do not apply. In fact, quantum ergodicity (and hence QUE) is not expected for all bases of $L^2(S^2)$. Indeed, the typical basis of zonal, tesseral, and sectorial spherical harmonics is known to \textit{not} be quantum ergodic, since a positive-density subsequence of this basis concentrates on the equator of $S^2$ (see \cite[pp.~497--498]{deVerdiere85}). However, since the eigenspaces have dimension bigger than one, it makes sense to consider other bases to see if equidistribution occurs. In this direction, Zelditch in \cite{ZelditchQES2} showed that almost every orthonormal basis of $L^2(S^2)$ composed of Laplacian eigenfunctions is quantum ergodic\footnote{Zelditch showed that the space $X$ of all orthonormal bases of $L^2(S^2)$ composed of Laplacian eigenfunctions is compact and has a group structure, so that it possesses a Haar probability measure. Thus, here ``almost'' means that there exists a measure zero subset $Y$ of $X$ (with respect to the Haar measure) such that every orthonormal basis in $X \setminus Y$ is quantum ergodic.}. Later, VanderKam in \cite[Thm.~1.3]{VanderKam} improved Zelditch's result by proving that almost every orthonormal basis of $L^2(S^2)$ composed of Laplacian eigenfunctions is quantum unique ergodic. Some explicit examples of quantum ergodic bases of $L^2(S^2)$ can be constructed via averaging operators; see \cite{BLLQE} for more details.

Similar to the case of hyperbolic surfaces, one can consider the arithmetic case for $S^2$ by diagonalizing a basis of $L^2(S^2)$ with respect to the Hecke operators. We describe now how these operators are constructed; for more details, see \cite{garza2025geodesicrestrictionproblemarithmetic}. 

For any ring $R$, we define ring of Hamiltonian quaternions over $R$ by $$B(R) := \langle 1,i,j,k \: | \: i^2=j^2=k^2=-1, \: ij=-ji=k\rangle_R,$$ so that $B(R) = B(\mathbb{Q}) \otimes_{\mathbb{Q}} R$ for any $\mathbb{Q}$-algebra $R$. We denote by $B^{\times}(R)$ the group of units of $B(R)$. Then $B^{\times}$ is an affine algebraic group over $\mathbb{Q}$. For $\alpha = a+bi+cj+dk \in B(R)$, let $\mathrm{Nrd}(\alpha) := a^2+b^2+c^2+d^2 \in R$ and $\mathrm{Trd}(\alpha) := 2a \in R$ be its (reduced) norm and (reduced) trace, respectively. Let $B^{0,1}(R)$ be the subset of $B(R)$ of quaternions of trace zero and norm $1$. The group $B^{\times}(R)\ni \alpha$ acts on $B^{0,1}(R) \ni \gamma$ by conjugation, i.e.~$\alpha \cdot \gamma := \alpha \gamma \alpha^{-1}$. 

Specializing to the case $R=\mathbb{R}$, we have the bijection\footnote{Throughout the paper, we identify $B^{0,1}(\mathbb{R})$ and $S^2$ under the above bijection and use them interchangeably.}
\begin{align*}
    B^{0,1}(\mathbb{R}) &\longrightarrow S^2 \\
    xi+yj+zk &\longmapsto (x,y,z),
\end{align*}
which gives an action of $B^{\times}(\mathbb{R})$ on $S^2$. This action is by rotations (see \cite[\S 3.1]{ConwaySmith}). Hence, there is an induced action on $L^2(S^2)$, which is the left regular representation of $B^{\times}(\mathbb{R})$ in $L^2(S^2)$. 

Let $$\mathcal{O} := \left\langle 1,i,j,\frac{1+i+j+k}{2}\right\rangle_{\mathbb{Z}}$$ be the ring of Hurwitz integers, which is a maximal order in $B(\mathbb{R})$ of class number $1$. If $\mathcal{O}^{\times}$ denotes the group of units of $\mathcal{O}$, we have that $|\mathcal{O}^{\times}| = 24$. For any positive integer $n$, we let $$ \mathcal{O}(n) := \left\{ \alpha \in \mathcal{O} : \mathrm{Nrd}(\alpha) = n \right\}$$ and define the $n$-th Hecke operator $T_n : C(S^2) \rightarrow C(S^2)$ by $$ (T_n f)(z) := \frac{1}{|\mathcal{O}^{\times}|}\sum_{\gamma \in \mathcal{O}(n)} (\gamma^{-1} \cdot f)(z) = \frac{1}{|\mathcal{O}^{\times}|}\sum_{\gamma \in \mathcal{O}(n)} f(\gamma \cdot z),$$ where $\gamma^{-1}$ is interpreted as an element of $B^{\times}(\mathbb{R})$. Since the action of $T_{2^n}$ is that of $T_1$ or $T_2$ depending on the $2$-adic valuation of $n$, the Hecke operators of interest are the ones corresponding to odd values of $n$. We restrict to these for the present discussion.

Since the Laplacian is rotation-invariant, all the Hecke operators commute with $\Delta$. Moreover, one can show that all the Hecke operators commute with each other and are self-adjoint in $C(S^2)$. Hence, if $H_{\ell}^{\mathcal{O}^{\times}}$ denotes the subspace of $H_{\ell}$ of functions that are invariant under the action of $\mathcal{O}^{\times}$, we can diagonalize a basis of each $H_{\ell}^{\mathcal{O}^{\times}}$ to obtain an orthonormal basis $\mathcal{B}_{\ell}$ of spherical harmonics that are eigenfunctions of all the Hecke operators. We call these \textit{Hecke eigenfunctions}. These are elements of $L^2(\mathcal{O}^{\times} \backslash S^2)$, which is a Hilbert space with inner product
\begin{equation}
    \langle f,g \rangle := \int_{\mathcal{O}^{\times} \backslash S^2} f(x) \overline{g(x)} \: \mathrm{d}\sigma(x), \label{innerProdbigoS2}
\end{equation}
where $\sigma$ denotes the round measure on $S^2$, which satisfies $\sigma(S^2) = 4\pi$ and $\sigma(\mathcal{O}^{\times} \backslash S^2) = \pi/3$. We let $\mathcal{B} := \bigcup_{\ell\geq 1} \mathcal{B}_{\ell}$.

The Jacquet--Langlands correspondence provides a connection between automorphic representations of $B^{\times}$ and of $\mathrm{GL}_{2}$, which at the level of automorphic forms on the corresponding homogeneous spaces gives an explicit correspondence between spherical harmonics and modular forms. Indeed, if $S_{\ell}^{\text{new}}(\Gamma_0(2))$ denotes the space of holomorphic newforms of weight $\ell$ and level $2$, then the theory of theta series (see \cite[pp.~103--105]{EichlerHeckeOp}, \cite[Ch.~9,10]{Iwaniec}) provides a map
\begin{align}
    H_{\ell}^{\mathcal{O}^{\times}} &\longrightarrow S_{2\ell+2}^{\text{new}}(\Gamma_0(2)) \label{LiftSpherHarmToModForms} \\
    \psi &\longmapsto f_{\psi} \nonumber
\end{align}
given by 
\begin{equation}
    f_{\psi} (z) := \frac{1}{24}\sum_{\gamma \in \mathcal{O}} \left\langle \psi(\xi), \psi\left(\gamma \xi \overline{\gamma} \right) \right\rangle_{L^2(S^2)} e^{2\pi i \mathrm{Nrd}(\gamma) z}, \label{JLModForm}
\end{equation}
where $\left\langle \psi(\xi), \psi\left(\gamma \xi \overline{\gamma} \right) \right\rangle_{L^2(S^2)}$ denotes the inner product in $L^2(S^2)$. This map is an explicit realization of the aforementioned Jacquet--Langlands correspondence. Additionally, this correspondence tells us that the Hecke eigenspaces in $H_{\ell}^{\mathcal{O}^{\times}}$ are one-dimensional.

With this in mind, Böcherer, Sarnak, and Schulze-Pillot in \cite[Conj.~1]{BSP} conjectured that the basis $\{\psi_n\}$ of Hecke eigenfunctions of $L^2(\mathcal{O}^{\times} \backslash S^2)$ is quantum unique ergodic. Moreover, they showed that the conjecture would follow from subconvex estimates for degree $8$ $L$-functions (in particular, it would follow from GLH) by reducing the problem to proving that
\begin{equation}
    \int_{S^2} \phi(x) |\phi_n(x)|^2 \: \mathrm{d}\sigma(x) \rightarrow 0 \label{QUEAltDef}
\end{equation}
as $n\rightarrow \infty$, where $\phi$ is any Hecke eigenfunction. These subconvex estimates would provide rates of convergence for \eqref{QUEAltDef}.

Here, we look to provide explicit rates of convergence of the involved measures. To be more precise, for each Hecke eigenfunction $\psi$ of $S^2$ of Laplacian eigenvalue $\lambda_{\psi} = \ell_{\psi} (\ell_{\psi}+1)$, normalized such that $\langle \psi, \psi \rangle = 1$, we define the probability measure $\sigma_{\psi}$ of $\mathcal{O}^{\times} \backslash S^2$ via $$ \int_{\mathcal{O}^{\times} \backslash S^2} f(x) \: \mathrm{d}\sigma_{\psi}(x) := \int_{\mathcal{O}^{\times} \backslash S^2} f(x) |\psi(x)|^2 \: \mathrm{d}\sigma(x) $$ for all continuous functions $f:\mathcal{O}^{\times} \backslash S^2 \rightarrow \mathbb{C}$. Arithmetic QUE for the sphere then states that
\begin{equation}
    \sigma_{\psi} \rightarrow \frac{1}{\sigma(\mathcal{O}^{\times} \backslash S^2)}\sigma \label{QUEMeasures}
\end{equation}
in the weak-$*$ topology as $\ell_{\psi} \rightarrow \infty$. 

To provide explicit rates of convergence in \eqref{QUEMeasures}, we use the $1$-Wasserstein distance. Given two Borel probability measures $\mu_1, \mu_2$ on a Polish space $(X,d)$, we define the $1$-Wasserstein distance between $\mu_1$ and $\mu_2$ as $$ \mathscr{W}_1(\mu_1, \mu_2) := \inf_{\pi \in \Pi(\mu_1,\mu_2)} \int_{X\times X} d(x,y) \: \mathrm{d}\pi(x,y), $$ where $\Pi(\mu_1, \mu_2)$ is the set of couplings of $\mu_1$ and $\mu_2$, that is, the set of Borel probability measures $\pi$ on $X\times X$ such that $\pi(B\times X)=\mu_1(B)$ and $\pi(X\times B) = \mu_2(B)$ for every Borel set $B\subseteq X$. The $1$-Wasserstein distance plays an important role in optimal transport; see \cite{VillaniOptTransport} for more details. In our context, the $1$-Wasserstein distance defines a metric in the space of Borel probability measures $\mu$ on $X$ for which $\int_{X} d(x_0, x) \: \mathrm{d}\mu(x)$ is finite for some (and hence for all) $x_0 \in X$. Convergence with respect to the $1$-Wasserstein distance is basically convergence in the weak-$*$ topology.

Although the $1$-Wasserstein distance between two Borel probability measures is not easy to estimate in general, one can do so in certain special situations via the Berry--Esseen inequality, which gives an estimate of this distance in terms of the Fourier transform of the given measures. Using this inequality in the case of the $2$-sphere, we are able to give a rate of convergence in \eqref{QUEMeasures}.

\begin{theorem}\label{MainThm-Wass}
Let $\psi$ be a Hecke eigenfunction of Laplacian eigenvalue $\lambda_{\psi}^2 = \ell_{\psi}(\ell_{\psi}+1)$ for some integer $\ell_{\psi} \geq 0$, normalized such that $\langle \psi, \psi \rangle = 1$. Then, assuming GLH, for all $\varepsilon>0$ we have $$ \mathscr{W}_{1}\left(\sigma_{\psi}, \frac{1}{\sigma(\mathcal{O}^{\times} \backslash S^2)} \sigma \right) \ll_{\varepsilon} \ell_{\psi}^{-\frac{1}{2} + \varepsilon}.$$
\end{theorem}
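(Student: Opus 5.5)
The plan is to combine a Berry--Esseen type inequality for the $1$-Wasserstein distance on $S^2$ with Watson--Ichino triple product bounds under GLH. Set $\nu := \sigma/\sigma(\mathcal{O}^{\times}\backslash S^2)$ and lift everything to $S^2$ as $\mathcal{O}^{\times}$-invariant measures; the Wasserstein distance on the quotient is bounded by the one upstairs up to constants. The Berry--Esseen inequality on the sphere (smoothing against a polynomial kernel of degree $L$, e.g.\ a Fejér/Jackson-type zonal kernel whose convolution moves mass by $O(1/L)$ in $\mathscr{W}_1$) gives, for any $L\geq 1$,
$$\mathscr{W}_1(\sigma_\psi,\nu)\ll \frac1L+\Bigl(\sum_{1\le m\le L}\frac{1}{\lambda_m}\sum_{\phi\in\mathcal{B}_m}\Bigl|\int \phi\,\mathrm{d}\sigma_\psi\Bigr|^2\Bigr)^{1/2},$$
where $\lambda_m=m(m+1)$. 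This uses the dual Kantorovich formulation, in which a $1$-Lipschitz $f$ has $\sum_m \lambda_m\|f_m\|^2\ll 1$. We then apply Cauchy--Schwarz in the spectral expansion of the smoothed $f$. Since $|\psi|^2$ is $\mathcal{O}^{\times}$-invariant, only $\mathcal{O}^{\times}$-invariant harmonics contribute, so the inner sum runs over the Hecke basis $\mathcal{B}_m$, which has size $\asymp m$.

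Next we bound each Weyl sum $\int \phi|\psi|^2\,\mathrm{d}\sigma$. Following Böcherer--Sarnak--Schulze-Pillot, Ichino's triple product formula expresses $|\int\phi|\psi|^2|^2$ as $L(1/2,\pi_\phi\otimes\pi_\psi\otimes\pi_\psi)$ divided by $L(1,\mathrm{Ad})$ values, times archimedean and $2$-adic local factors. The $L$-values come through the Jacquet--Langlands lift \eqref{LiftSpherHarmToModForms}, with $f_\phi\in S_{2m+2}^{\mathrm{new}}(\Gamma_0(2))$ and $f_\psi\in S_{2\ell+2}^{\mathrm{new}}(\Gamma_0(2))$. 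The archimedean factor is essentially a squared Wigner $3j$-symbol $\begin{pmatrix}\ell&\ell&m\\0&0&0\end{pmatrix}^2$, normalized appropriately. For $m\le 2\ell$ with $m$ even, it has size $\asymp \bigl(m\,\sqrt{(4\ell^2-m^2)}\bigr)^{-1}$, roughly $1/(m\ell)$ when $m\le \ell$. GLH gives $L(1/2,\dots)\ll \ell^{\varepsilon}$ and $L(1,\mathrm{Ad})^{-1}\ll\ell^{\varepsilon}$. Hence $|\int\phi\,\mathrm{d}\sigma_\psi|^2\ll_\varepsilon \ell^{-1+\varepsilon}m^{-1}$, up to the precise normalization of the local factors, which I would check against Watson's computation.

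Plugging this in with $\#\mathcal{B}_m\ll m$, the spectral sum is $\ll \ell^{-1+\varepsilon}\sum_{m\le L} m\cdot m^{-2}\cdot m^{-1}\ll \ell^{-1+\varepsilon}$, uniformly in $L$. The square root is $\ell^{-1/2+\varepsilon}$. Choosing $L=\ell$, or any $L\ge \ell^{1/2}$, gives the claimed $\ell^{-1/2+\varepsilon}$. Since $|\psi|^2$ is a polynomial of degree $2\ell$, we could alternatively take $L=2\ell$ and avoid smoothing error issues entirely.

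The main obstacle is the second step: getting the archimedean local factor in Ichino's formula uniformly in $m$ relative to $\ell$, including near $m\approx 2\ell$ where the $3j$-symbol blows up like $(2\ell-m)^{-1/2}$, together with fixing the $2$-adic and normalization constants. Near the edge the bound degrades. However, we only need $m\le L$ with $L$ as small as $\ell^{1/2}$, so I would pick $L=\ell^{1/2}$ or $L=\ell$ and stay in the range $m\le \ell$ where the clean bound $\ll 1/(m\ell)$ holds. A secondary technical point is proving the spherical Berry--Esseen inequality in the stated form, with the $\lambda_m^{-1}$ weight and an $O(1/L)$ smoothing error. This is standard via Jackson kernels and the Funk--Hecke formula.
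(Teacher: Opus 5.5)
Your proposal is correct and follows essentially the paper's argument. Both use the spherical Berry--Esseen inequality truncated at level $T$ (the paper takes $T=\lceil \ell_\psi^{1/2}\rceil$), Watson--Ichino with archimedean constant $\asymp (\ell_\phi+1)^{-1}(1+2\ell_\psi-\ell_\phi)^{-1/2}(2\ell_\psi+\ell_\phi)^{-1/2}$ and a bounded $2$-adic constant, GLH for the central value, and the Weyl law, so the spectral sum is $\ll_\varepsilon \ell_\psi^{-1+\varepsilon}$ uniformly in $T$. One attribution fix: GLH only bounds $L$-values from above, so the lower bound $L(1,\mathrm{ad}(\pi))\gg_\varepsilon \ell^{-\varepsilon}$ instead comes from the unconditional Hoffstein--Lockhart bound (Theorem~\ref{AdjointBound}) that the paper uses.
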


The analogous result to Theorem \ref{MainThm-Wass} for $\mathrm{SL}_{2}(\mathbb{Z}) \backslash \mathbb{H}$ was recently shown by Humphries in \cite{Humphries2025} with the same rate of convergence, assuming GLH as well.

A stronger variant of quantum unique ergodicity is a shrinking target analog in which we allow the continuity set $B$ to vary with $\lambda_{\psi}$ and we ask if equidistribution still occurs. One can understand this problem as trying to capture the smallest scale at which the behavior of the arithmetic spherical harmonics remains random. The analogous question for $\mathrm{SL}_{2}(\mathbb{Z}) \backslash \mathbb{H}$ has been extensively studied; see \cite{HumphriesPhD} and \cite{YoungQUEShrinking} for a recount of the results related to this question and optimal convergence rates.

We specialize to the case in which $B$ is a spherical cap. For $x,y\in S^2$, if $d(x,y)$ denotes the distance between $x$ and $y$ in $S^2$, then $$d(x,y) = \arccos{(x\cdot y)},$$ where $x\cdot y$ denotes dot product in $\mathbb{R}^3$. Hence, for $w\in S^2$ and $0<R\leq \pi$, if $B_R(w)$ denotes the ball in $S^2$ centered at $w$ of radius $R$, we have $$ \mathrm{vol}(B_R(w)) = 4\pi \sin^2(R/2),$$ which is independent of the center $w$. Thus, we denote $\mathrm{vol}(B_R) := \mathrm{vol}(B_R(w))$. In particular, note that $\mathrm{vol}(B_R) \asymp R^2$ for $R$ sufficiently small. We seek to answer the following question.

\begin{question}\label{QUEQuestion}
Let $\psi$ be a Hecke eigenfunction of Laplacian eigenvalue $\lambda_{\psi}^2 = \ell_{\psi}(\ell_{\psi}+1)$ for some integer $\ell_{\psi}\geq 0$, normalized such that $\langle \psi, \psi\rangle = 1$. For what conditions on $R$, with regards to $\ell_{\psi}$, is it still true that 
\begin{equation}
    \frac{1}{\mathrm{vol}(B_R)} \int_{B_R(w)} |\psi(z)|^2 \: \mathrm{d}\sigma(z) = \frac{1}{\mathrm{vol}(\mathcal{O}^{\times}\backslash S^2)} + o_w(1) \label{QUESphericalCapQuestion}
\end{equation}
as $\ell_{\psi}$ tends to infinity? 
\end{question}

As remarked by Humphries in \cite[\S 1.3]{HumphriesPhD} (based on \cite[\S 5]{HejhalRackner}), thinking of $\ell_{\psi}^{-1}$ as the Planck scale, one should not expect equidistribution to hold when $R$ is below or within a logarithmic window of the Planck scale. Hence, we explore the case when $R$ is substantially bigger than $\ell_{\psi}^{-1}$, that is, $R\gg \ell_{\psi}^{-\delta}$ for $0<\delta <1$.

Question \ref{QUEQuestion} has been studied in the case of the modular surface by Young in \cite[Prop.~5]{YoungQUEShrinking}, where he showed, assuming GLH, that equidistribution still occurs on every shrinking ball of radius $R\gg t_{g}^{-\delta}$ where $g$ is a Hecke--Maass form of level $1$ of Laplacian eigenvalue $\frac{1}{4} + t_{g}^2$ and $0<\delta < \frac{1}{3}$. We prove an analogous result in our context.

\begin{theorem}\label{MainThm-AllCenters}
Let $\psi$ be a Hecke eigenfunction of Laplacian eigenvalue $\lambda_{\psi}^2 = \ell_{\psi}(\ell_{\psi}+1)$ for some integer $\ell_{\psi} \geq 0$, normalized such that $\langle \psi, \psi \rangle = 1$. Assume GLH and suppose that $R\gg \ell_{\psi}^{-\delta}$ for some $0<\delta < \frac{1}{3}$. Then, as $\ell_{\psi} \rightarrow \infty$, we have $$ \frac{1}{\mathrm{vol}(B_R)} \int_{B_R(w)} |\psi(z)|^2 \: \mathrm{d}\sigma(z) = \frac{1}{\mathrm{vol}(\mathcal{O}^{\times}\backslash S^2)} + o_{\delta}(1)$$ for every fixed point $w\in \mathcal{O}^{\times} \backslash S^2$.
\end{theorem}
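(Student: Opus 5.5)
We follow Young's strategy for the modular surface, adapted to $\mathcal{O}^{\times}\backslash S^2$. First, the sharp cutoff $\mathbf{1}_{B_R(w)}$ is replaced by smooth bump functions. Fix a small parameter $\eta = \ell_{\psi}^{-\varepsilon_0}$. We choose smooth, $\mathcal{O}^{\times}$-invariant functions $h_{\pm}$ (symmetrized over the finite group $\mathcal{O}^{\times}$, which is harmless since $w$ is fixed) with the following properties:
\begin{enumerate}
\item they are radial about $w$;
\item $0\le h_-\le \mathbf{1}_{B_R(w)}\le h_+\le 1$;
\item $h_{\pm}$ is supported in $B_{R(1\pm\eta)}(w)$, so that $\int h_{\pm}\,\mathrm{d}\sigma = \mathrm{vol}(B_R)(1+O(\eta))$;
\item $\|\Delta^{k} h_{\pm}\|_{\infty}\ll_k (\eta R)^{-2k}$.
\end{enumerate}
By positivity of $|\psi|^2$, it suffices to show that $\frac{1}{\mathrm{vol}(B_R)}\int h_{\pm}|\psi|^2\,\mathrm{d}\sigma = \frac{1}{\mathrm{vol}(\mathcal{O}^{\times}\backslash S^2)}\cdot\frac{\int h_{\pm}\,\mathrm{d}\sigma}{\mathrm{vol}(B_R)} + o(1)$.

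Next, we spectrally expand $h = h_{\pm}$ in the orthonormal basis $\mathcal{B}\cup\{\text{const}\}$ of $L^2(\mathcal{O}^{\times}\backslash S^2)$. The constant term gives exactly the main term. The remainder is $\sum_{\phi\in\mathcal{B}}\langle h,\phi\rangle\langle \phi|\psi|^2,1\rangle$. For each $\phi\in\mathcal{B}_{\ell_\phi}$ we need two inputs.
\begin{itemize}
\item \emph{Size of the coefficients.} Since $h$ is radial about $w$, the projection of $h$ onto $H_{\ell}$ is a multiple of the zonal harmonic at $w$, given by the Funk--Hecke formula: $\langle h,\phi\rangle = \hat h(\ell)\,\overline{\phi(w)}$, where $\hat h(\ell)=2\pi\int h(\theta)P_\ell(\cos\theta)\sin\theta\,\mathrm{d}\theta$. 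One has $|\hat h(\ell)|\ll R^2$ trivially. Integrating by parts with $\Delta$ gives $|\hat h(\ell)|\ll_A R^2(1+\ell\eta R)^{-A}$, and the Legendre asymptotics improve this to $|\hat h(\ell)|\ll R^{1/2}\ell^{-3/2}$ for $\ell R\gg1$.
\item \emph{Triple products.} Via Watson–Ichino together with the Jacquet–Langlands lift \eqref{JLModForm}, $|\langle \phi|\psi|^2,1\rangle|^2$ equals an $L$-value ratio of the shape $L(1/2,f_\phi\times f_\psi\times f_\psi)/(L(1,\mathrm{ad} f_\phi)L(1,\mathrm{ad}f_\psi)^2)$ times archimedean factors. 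This is the same input used for Theorem \ref{MainThm-Wass}, and I would reuse that computation. The archimedean factor is essentially $\ell_\phi^{-1}\ell_\psi^{-1}$ for $\ell_\phi<2\ell_\psi$, with extra decay $(\ell_\psi^2-\ell_\phi^2/4)^{-1/2}$, and it vanishes for $\ell_\phi>2\ell_\psi$. GLH then gives $|\langle\phi|\psi|^2,1\rangle|\ll \ell_\psi^{-1/2+\varepsilon}\ell_\phi^{-1/2}$, up to the transition range, which is killed by the decay of $\hat h$.
\end{itemize}

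We then sum over $\ell_\phi\le L:=\ell_\psi^{\varepsilon}(\eta R)^{-1}$; the tail beyond $L$ is negligible by rapid decay. Fixing $w$, the local Weyl law / pointwise bound $\sum_{\phi\in\mathcal{B}_\ell}|\phi(w)|^2\ll \ell$ (the addition theorem) plus Cauchy–Schwarz gives
\[
\sum_{\phi\in\mathcal{B}_{\ell}}|\phi(w)|\ll \ell.
\]
Hence the error, divided by $\mathrm{vol}(B_R)\asymp R^2$, is at most
\[
R^{-2}\sum_{\ell\le L}\min\bigl(R^2,R^{1/2}\ell^{-3/2}\bigr)\,\ell\,\ell_\psi^{-1/2+\varepsilon}\ell^{-1/2}\ll \ell_\psi^{-1/2+\varepsilon}R^{-3/2}\log L .
\]
Choosing $R\gg\ell_\psi^{-\delta}$, this is $\ell_\psi^{-1/2+3\delta/2+\varepsilon}$, which is $o(1)$ precisely when $\delta<1/3$. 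This accounts for the threshold in the statement. The smoothing error $O(\eta)$ is also $o(1)$.

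The main obstacle is the precise uniform bound for the archimedean factor in Watson–Ichino for $\mathrm{SO}(3)$, i.e. the integral of three Legendre-type matrix coefficients, which in the compact case is a squared Wigner $3j$-symbol. We need its size $\asymp(\ell_\phi\ell_\psi)^{-1}$ uniformly for $\ell_\phi\le 2\ell_\psi^{1-\varepsilon}$, and I would get it from the classical asymptotic of $\begin{pmatrix}\ell_\phi&\ell_\psi&\ell_\psi\\0&0&0\end{pmatrix}^2$ via Stirling's formula. If Theorem \ref{MainThm-Wass} was proven by the same route, this step can be quoted from that proof.
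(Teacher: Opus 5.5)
Your proposal is correct. It follows the paper's skeleton: sandwich the indicator, expand spectrally with coefficients evaluated at $w$ via the spherical transform, bound $\langle|\psi|^2,\phi\rangle$ by Watson--Ichino, GLH and Theorem~\ref{AdjointBound}, then sum using Cauchy--Schwarz and the local Weyl law. In both arguments the threshold $\delta<\frac13$ comes from the same quantity $R^{-3/2}\ell_{\psi}^{-1/2+\varepsilon}$.

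What differs is the smoothing. The paper, following \cite{HumphriesRadziwill}, sandwiches with $K_{R\mp\rho}*K_{\rho}$ (Lemma~\ref{ConvPointPairInvariants}). The transform $h_{R\mp\rho}h_{\rho}$ is controlled by the quoted Lemma~\ref{boundsSHCTransform} alone, but it decays only like $(R\rho)^{-3/2}\ell^{-3}$. The paper must therefore treat every $\ell_{\phi}\le 2\ell_{\psi}$, including the edge where the Watson--Ichino factor $(1+2\ell_{\psi}-\ell_{\phi})^{-1/2}$ grows. It is growth there, not ``extra decay'' as you wrote. The paper then balances $\rho/R$ against $R^{-3/2}\rho^{-3/2}\ell_{\psi}^{-2}$ by taking $\rho=R^{-1/5}\ell_{\psi}^{-4/5}$.

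Your smooth bump cuts the spectrum off at $L\ll\ell_{\psi}^{\delta+o(1)}$, far below $\ell_{\psi}$. So only the regime where the weight is uniformly $\asymp(\ell_{\phi}\ell_{\psi})^{-1}$ matters, and nothing needs optimizing. The price is that you must prove the transform bounds yourself:
\begin{itemize}
\item Rapid decay follows from $\Delta^{k}$, as you say.
\item The bound $|\hat h(\ell)|\ll R^{1/2}\ell^{-3/2}$ is cleanest if you take $h$ radially nonincreasing and write $h=\int(-h'(s))\,\mathbf{1}_{B_s(w)}\,\mathrm{d}s$ with $s\asymp R$. Then apply Lemma~\ref{boundsSHCTransform} to each $\mathbf{1}_{B_s(w)}=\mathrm{vol}(B_s)\,k_s(\cdot,w)$.
\end{itemize}

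The range $R^{-1}\le\ell\le L$ costs only $\log L$. So instead of $\eta=\ell_{\psi}^{-\varepsilon_0}$ you can take $\eta R=\ell_{\psi}^{-1+2\varepsilon}$ (so $L\le\ell_{\psi}$) and recover the paper's rate $\ell_{\psi}^{-\frac12(1-3\delta)+\varepsilon}$.

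In fact neither smoothing is necessary. Since $|\psi|^2$ is the restriction of a polynomial of degree $2\ell_{\psi}$, the identity $\frac{1}{\mathrm{vol}(B_R)}\int_{B_R(w)}|\psi|^2\,\mathrm{d}\sigma-\frac{1}{\mathrm{vol}(\mathcal{O}^{\times}\backslash S^2)}=\sum_{\ell_{\phi}\le 2\ell_{\psi}}h_R(\ell_{\phi})\langle|\psi|^2,\phi\rangle\phi(w)$ from the proof of Proposition~\ref{VarianceFormula} is an exact finite sum. Bounding it range by range with $h_R$ itself already gives $\ll R^{-3/2}\ell_{\psi}^{-1/2+\varepsilon}$, including the range $\ell_{\psi}\le\ell_{\phi}\le 2\ell_{\psi}$.
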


Following the same ideas for the proof of Theorem \ref{MainThm-AllCenters}, we also obtain bounds on the spherical cap discrepancy between $\sigma_{\psi}$ and $\frac{1}{\sigma(\mathcal{O}^{\times} \backslash S^2)} \sigma$. More precisely, let $$ \mathcal{D}_{\mathrm{cap}}\left(\sigma_{\psi},\frac{1}{\sigma(\mathcal{O}^{\times} \backslash S^2)} \sigma \right) := \sup_{B} \left|\sigma_{\psi}(B) - \frac{\sigma(B)}{\sigma(\mathcal{O}^{\times} \backslash S^2)}\right|,$$ where the supremum ranges over all geodesic spherical caps\footnote{A spherical cap is \textit{geodesic} if it does not intersect itself; in other words, $B_{R}(w)$ is a geodesic spherical cap if for all $z\in B_{R}(w)$, we have that $d(z,w)<R$ and the distance from $z$ to $w$ along any other geodesic path is $\geq R$.} $B$ of $\mathcal{O}^{\times} \backslash S^2$. This is yet another quantification for the rate of convergence in this equidistribution problem.

\begin{theorem}\label{MainThm-SphCapDisc}
Let $\psi$ be a Hecke eigenfunction of Laplacian eigenvalue $\lambda_{\psi}^2 = \ell_{\psi}(\ell_{\psi}+1)$ for some integer $\ell_{\psi} \geq 0$, normalized such that $\langle \psi, \psi \rangle = 1$. Assume GLH. Then, for all $\varepsilon>0$, we have $$ \mathcal{D}_{\mathrm{cap}}\left( \sigma_{\psi}, \frac{1}{\sigma(\mathcal{O}^{\times} \backslash S^2)} \sigma \right) \ll_{\varepsilon} \ell_{\psi}^{-\frac{1}{2} + \varepsilon}. $$
\end{theorem}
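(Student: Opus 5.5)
We bound the cap discrepancy by smoothing the cap and applying GLH to each spherical harmonic coefficient. Fix a geodesic spherical cap $B=B_R(w)$ of $\mathcal{O}^{\times}\backslash S^2$, and lift everything to $S^2$ (replacing $B$ by its $\mathcal{O}^{\times}$-orbit). A scale parameter $r\in(0,R)$ will be chosen at the end.

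\textbf{Step 1: sandwich the indicator.} We construct smooth zonal minorants and majorants $k^{\pm}_{r}$ of $\mathbf{1}_{B_R(w)}$. Let $k_r$ be a nonnegative zonal bump of mass one supported in $B_r$. Set
$$k^{+}_r=\mathbf{1}_{B_{R+r}}\ast k_r, \qquad k^{-}_r=\mathbf{1}_{B_{R-r}}\ast k_r,$$
where $\ast$ is spherical convolution, i.e.\ convolution with a zonal kernel. These satisfy $k^{-}_r\le \mathbf{1}_{B_R(w)}\le k^{+}_r$ and $\int (k^{+}_r-k^{-}_r)\,\mathrm{d}\sigma\ll r$, since the difference is supported on an annulus of width $\asymp r$ and length $\ll 1$. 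By positivity of $\sigma_\psi$ it therefore suffices to show
$$\Big|\int k^{\pm}_r\,\mathrm{d}\sigma_\psi-\frac{1}{\sigma(\mathcal{O}^{\times}\backslash S^2)}\int k^{\pm}_r\,\mathrm{d}\sigma\Big|\ll_\varepsilon \ell_\psi^{-1/2+\varepsilon}\,(\text{loss in } r),$$
and then add the error $O(r)$.

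\textbf{Step 2: spectral expansion.} We expand $k^{\pm}_r$, projected to $\mathcal{O}^{\times}$-invariant functions, in the Hecke basis $\mathcal{B}=\bigcup_m\mathcal{B}_m$. The $m=0$ term gives exactly the main term. For $m\ge1$, the Funk--Hecke formula gives
$$\langle k^{\pm}_r,\phi\rangle=\widehat{h}(m)\,\phi(w),$$
where $\widehat h(m)$ is the Legendre coefficient of the zonal profile. The sup-norm bound $\sum_{\phi\in\mathcal{B}_m}|\phi(w)|^2\ll m$ (addition theorem) together with Cauchy--Schwarz controls the sum over the $\asymp m$-dimensional space. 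The profile bounds are $|\widehat{h}(m)|\ll \min(R,m^{-1})\,m^{-1/2}$ times a factor $(1+mr)^{-A}$ coming from the smooth mollifier. Note that $\langle \phi,|\psi|^2\rangle=0$ unless $m\le 2\ell_\psi$ and $m$ is even.

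\textbf{Step 3: GLH input.} This is the main obstacle. Via Watson--Ichino, applied through the Jacquet--Langlands lift \eqref{JLModForm} as in \cite{BSP}, $|\langle \phi,|\psi|^2\rangle|^2$ equals a ratio of $L$-values. This ratio is essentially
$$\frac{L(1/2,f_\phi\times f_\psi\times f_\psi)}{L(1,\mathrm{ad} f_\phi)\,L(1,\mathrm{ad}f_\psi)^2}$$
times archimedean factors. Under GLH it gives $\ll_\varepsilon \ell_\psi^{-1+\varepsilon}m^{-1/2+\varepsilon}$ for $m<2\ell_\psi$, with the correct Gamma-factor decay; the factor should actually contain $(\ell_\psi^2-m^2/4)^{-1/2}$-type behaviour near the edge. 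The delicate points are computing the archimedean local integral (a Wigner $3j$ symbol, $\begin{pmatrix}\ell&\ell&m\\0&0&0\end{pmatrix}^2\asymp (m(2\ell-m))^{-1/2}\ell^{-1}$ type) and handling $m$ close to $2\ell_\psi$. Since we cut off at $m\ll r^{-1}\ell_\psi^{\varepsilon}$, we only need $m\le \ell_\psi$ if $r\ge \ell_\psi^{-1+\varepsilon}$.

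\textbf{Step 4: sum and optimize.} Combining Steps 2 and 3, the tail is
$$\sum_{m\le r^{-1}\ell^{\varepsilon}}\ m\cdot m^{-3/2}\cdot \ell^{-1/2+\varepsilon}m^{-1/4}\cdot\sqrt{m}\ \ll\ \ell^{-1/2+\varepsilon}r^{-3/4}.$$
Here we use Cauchy--Schwarz over the $m$-dimensional space $\mathcal{B}_m$, with $\big(\sum_\phi|\phi(w)|^2\big)^{1/2}\ll m^{1/2}$, and the bound $|\widehat h(m)|\ll m^{-3/2}$ uniformly in $R$.

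This is too weak, so we instead refine as Humphries does for Theorem \ref{MainThm-Wass}: use the spectral-average form of GLH. Averaging over $\phi\in\mathcal{B}_m$ rather than bounding pointwise replaces $\sqrt{m}\cdot\max$ by the $\ell^2$ bound
$$\sum_\phi|\langle\phi,|\psi|^2\rangle|^2\ll \ell^{-1+\varepsilon}m^{1/2}.$$
This yields $\ll \ell^{-1/2+\varepsilon}\log(1/r)$, and taking $r=\ell^{-1/2}$ gives the claimed bound $\ell_\psi^{-1/2+\varepsilon}$. All bounds are uniform in $w$ and $R$, since only $|\widehat h(m)|\ll m^{-3/2}$ was used; hence the supremum over caps is controlled, completing the theorem. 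The uniformity of the Legendre-coefficient bound in $R$, including caps near full size, must be checked but is classical.
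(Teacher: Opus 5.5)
Your overall architecture is the paper's: sandwich the cap between mollified zonal caps, expand spectrally, insert Watson--Ichino plus GLH, apply Cauchy--Schwarz against the local Weyl law in each degree, and take $r=\ell_\psi^{-1/2}$. Your sandwich and the uniform profile bound $|\widehat h(m)|\ll m^{-3/2}$ are fine. However, the conclusion $\ll \ell_\psi^{-1/2+\varepsilon}\log(1/r)$ does not follow from the inputs you state.

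With $\sum_{\phi\in\mathcal{B}_m}|\phi(w)|^2\ll m$ and your averaged bound $\sum_{\phi\in\mathcal{B}_m}|\langle\phi,|\psi|^2\rangle|^2\ll \ell_\psi^{-1+\varepsilon}m^{1/2}$, Cauchy--Schwarz gives a degree-$m$ contribution of $m^{-3/2}\cdot(\ell_\psi^{-1}m^{1/2})^{1/2}\cdot m^{1/2}=\ell_\psi^{-1/2}m^{-3/4}$. Summed over $m\le r^{-1}\ell_\psi^{\varepsilon}$, this is $\ell_\psi^{-1/2+\varepsilon}r^{-1/4}$. Balancing against the $O(r)$ sandwich error yields only $\ell_\psi^{-2/5+\varepsilon}$.

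The ``spectral-average form of GLH'' is not a new input. Your pointwise bound is the same for every $\phi\in\mathcal{B}_m$ and $\#\mathcal{B}_m\ll m$, so the averaged bound is literally $\#\mathcal{B}_m$ times the pointwise one. Your two computations in Step 4 agree once the spurious extra $\sqrt m$ in the first display is removed: Cauchy--Schwarz gives $m^{1/2}\cdot m^{1/2}$, not $m\cdot m^{1/2}$.

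The actual missing ingredient is the correct archimedean size. The squared $3j$ symbol (equivalently the Gaunt integral) is of size $\bigl((m+1)(1+2\ell_\psi-m)^{1/2}(2\ell_\psi+m)^{1/2}\bigr)^{-1}$, roughly the inverse area of the triangle with sides $\ell_\psi,\ell_\psi,m$. This is $\asymp (m\ell_\psi)^{-1}$ for $m\le\ell_\psi$, which is the content of Lemma~\ref{LocalConstInfty} and Corollary~\ref{LocalConstInftyEstimate}. Your $(m(2\ell_\psi-m))^{-1/2}\ell_\psi^{-1}$, and the resulting $\ell_\psi^{-1+\varepsilon}m^{-1/2}$, have the wrong power of $m$.

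With the correct factor, GLH and Theorem~\ref{AdjointBound} give $|\langle|\psi|^2,\phi\rangle|^2\ll_\varepsilon \ell_\psi^{-1+\varepsilon}m^{-1}$. Hence $\sum_{\phi\in\mathcal{B}_m}|\langle|\psi|^2,\phi\rangle|^2\ll\ell_\psi^{-1+\varepsilon}$ uniformly in $m\le\ell_\psi$, the degree-$m$ contribution is $\ll\ell_\psi^{-1/2+\varepsilon}m^{-1}$, and the total is $\ll\ell_\psi^{-1/2+\varepsilon}\log(1/r)$. Taking $r=\ell_\psi^{-1/2}$ finishes.

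Once corrected, your proof coincides with the paper's, with one genuine difference: your smooth mollifier. Its rapidly decaying transform lets you truncate at $m\le r^{-1}\ell_\psi^{\varepsilon}<\ell_\psi$ and skip the range $\ell_\psi\le\ell_\phi\le 2\ell_\psi$. The paper must treat that range separately, with its edge factor $(1+2\ell_\psi-\ell_\phi)^{-1/2}$, because its indicator mollifier $k_\rho$ only gives $h_\rho(m)\ll(m\rho)^{-3/2}$.
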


The analogous result to Theorem \ref{MainThm-SphCapDisc} was originally conjectured by Luo and Sarnak in \cite[p.~210]{LuoSarnakQUE}, where it is also mentioned that if true, it would be the best possible result. This conjecture was implicitly proved by Young in \cite{YoungQUEShrinking} assuming GLH, as pointed out by Humphries and Thorner in \cite[p.~2]{HumphriesThorner}.

Implicit in Question \ref{QUEQuestion} is the requirement that equidistribution occurs for shrinking spherical caps $B_R(w)$ centered at $w$ for every $w\in \mathcal{O}^{\times} \backslash S^2$. We can study a related question in which we relax this requirement and ask for equidistribution to occur in shrinking spherical caps $B_R(w)$ centered at $w$ for \textit{almost} every $w\in \mathcal{O}^{\times} \backslash S^2$. The analogous question for the modular surface was answered by Humphries in \cite[Thm.~1.17]{HumphriesPhD}, where he showed that equidistribution holds on almost every shrinking ball when $g$ is a Hecke--Maass form of level $1$ of Laplacian eigenvalue $\frac{1}{4} + t_{g}^2$ and $R\asymp t_{g}^{-\delta}$ for any $0<\delta <1$ (i.e.~at all scales above the Planck scale), conditionally on GLH. We are able to obtain the same result in our context.

\begin{theorem}\label{MainThm-AlmostAllCenters}
Let $\psi$ be a Hecke eigenfunction of Laplacian eigenvalue $\lambda_{\psi}^2 = \ell_{\psi}(\ell_{\psi}+1)$ for some integer $\ell_{\psi}\geq 0$, normalized such that $\langle \psi, \psi\rangle = 1$. Assume GLH and suppose that $R \gg \ell_{\psi}^{-\delta}$ for some $0<\delta < 1$. Then for any $c \gg_{\varepsilon} \ell_{\psi}^{-\frac{1-\delta}{2} + \varepsilon}$, $$ \mathrm{vol}\left( \left\{ w\in \mathcal{O}^{\times} \backslash S^2 : \left| \frac{1}{\mathrm{vol}(B_R)} \int_{B_R(w)} |\psi(z)|^2 \: \mathrm{d}\sigma(z) - \frac{1}{\mathrm{vol}(\mathcal{O}^{\times} \backslash S^2)} \right| > c \right\} \right) $$ converges to zero as $\ell_{\psi}$ tends to infinity.
\end{theorem}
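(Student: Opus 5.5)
Our plan is to prove Theorem \ref{MainThm-AlmostAllCenters} by bounding the variance of the local mass and then applying Chebyshev's inequality. Define
\[
F_R(w) := \frac{1}{\mathrm{vol}(B_R)} \int_{B_R(w)} |\psi(z)|^2 \: \mathrm{d}\sigma(z) = \int_{S^2} k_R(z\cdot w)\,|\psi(z)|^2 \: \mathrm{d}\sigma(z),
\]
where $k_R = \mathrm{vol}(B_R)^{-1}\mathbf{1}_{[\cos R,1]}$ is a point-pair invariant. Since $|\psi|^2$ is $\mathcal{O}^{\times}$-invariant, so is $F_R$. We will then show that
\[
\int_{\mathcal{O}^{\times}\backslash S^2} \Bigl|F_R(w) - \tfrac{1}{\mathrm{vol}(\mathcal{O}^{\times}\backslash S^2)}\Bigr|^2 \, \mathrm{d}\sigma(w) \ll_\varepsilon \ell_\psi^{-(1-\delta)+\varepsilon}.
\]
Given this bound, Chebyshev's inequality shows that the volume of the exceptional set is at most $c^{-2}\ell_\psi^{-(1-\delta)+\varepsilon}$. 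This tends to zero when $c \gg \ell_\psi^{-\frac{1-\delta}{2}+\varepsilon'}$, provided we choose $\varepsilon<2\varepsilon'$.

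To compute the variance, we expand $|\psi|^2$ spectrally in $L^2(\mathcal{O}^{\times}\backslash S^2)$. Its constant term is $\langle |\psi|^2,1\rangle/\mathrm{vol}(\mathcal{O}^{\times}\backslash S^2)=1/\mathrm{vol}(\mathcal{O}^{\times}\backslash S^2)$. The remaining terms are $\sum_{\phi\in\mathcal{B}} \langle |\psi|^2,\phi\rangle \phi$. Since $|\psi|^2$ lies in $\bigoplus_{m\le 2\ell_\psi} H_m$, only degrees $\ell_\phi \le 2\ell_\psi$ occur. By the Funk--Hecke formula, convolution with $k_R$ acts on $H_m$ by the scalar
\[
\widehat{k_R}(m) = \frac{2\pi}{\mathrm{vol}(B_R)}\int_{\cos R}^1 P_m(t)\,\mathrm{d}t.
\]
This equals $O(1)$ always, and using $\int P_m = (P_{m+1}-P_{m-1})/(2m+1)$ together with the standard Legendre bounds it is $\ll (mR)^{-3/2}$ for $mR \ge 1$. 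Parseval then gives
\[
\mathrm{Var} = \sum_{1\le \ell_\phi\le 2\ell_\psi} |\widehat{k_R}(\ell_\phi)|^2 \,|\langle |\psi|^2,\phi\rangle|^2 .
\]

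The arithmetic input is Watson's triple product formula in the form used by Böcherer--Sarnak--Schulze-Pillot (presumably also the input for Theorem \ref{MainThm-Wass}). It expresses $|\langle \psi^2,\phi\rangle|^2$ through $L(1/2, f_\psi\times f_\psi\times f_\phi)$ times archimedean factors given by Wigner $3j$-symbols. Under GLH, with the central value bounded by $\ell_\psi^\varepsilon$, this should yield
\[
|\langle |\psi|^2,\phi\rangle|^2 \ll_\varepsilon \ell_\psi^{-1+\varepsilon}\,\ell_\phi^{-1}
\]
uniformly for $\ell_\phi \le 2\ell_\psi$ away from the edge $\ell_\phi\approx 2\ell_\psi$, with an acceptable bound near the edge. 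The $3j$-symbol $\begin{pmatrix}\ell&\ell&m\\0&0&0\end{pmatrix}^2 \asymp (\ell m)^{-1}$-type asymptotic supplies the saving. We also use that the number of Hecke eigenforms of degree $m$ is $\asymp m$.

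Summing, degree $m$ contributes $\ll m\cdot \ell_\psi^{-1+\varepsilon} m^{-1} |\widehat{k_R}(m)|^2$. This gives
\[
\mathrm{Var} \ll \ell_\psi^{-1+\varepsilon}\Bigl(\sum_{m\le R^{-1}} 1 + \sum_{m>R^{-1}} (mR)^{-3}\Bigr) \ll \ell_\psi^{-1+\varepsilon} R^{-1} \ll \ell_\psi^{-(1-\delta)+\varepsilon},
\]
as required. The main obstacle is the uniform local bound on the archimedean factor in Watson's formula across the whole range $m\le 2\ell_\psi$, including the transition region near $2\ell_\psi$ where the $3j$-symbol asymptotics change. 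If the uniform bound fails there, we would instead use the sharp-cutoff tail decay $(mR)^{-3}$ together with the trivial bound $\sum_\phi |\langle|\psi|^2,\phi\rangle|^2 \le \|\psi\|_4^4 \ll \ell_\psi^{\varepsilon}$ (itself a GLH consequence) on the range $m\gg \ell_\psi^{1-\eta}$. The resulting loss is absorbed because $(\ell_\psi R)^{-3}\ell_\psi^{\varepsilon}\ll \ell_\psi^{-1+\delta}$ for $\delta<1$.
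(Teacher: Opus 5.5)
Your proposal is correct and follows essentially the same route as the paper: Chebyshev's inequality, the spectral formula $\mathrm{Var}(\psi;R)=\sum_{\phi}|h_R(\ell_\phi)|^2|\langle|\psi|^2,\phi\rangle|^2$, the Watson--Ichino formula under GLH, the decay of $h_R$, and the Weyl law. For the Watson--Ichino step you also need the lower bound $L(1,\mathrm{ad})\gg_\varepsilon\ell^{-\varepsilon}$ (Theorem \ref{AdjointBound}), which you omit. The edge issue is settled by the explicit archimedean factor $\bigl((\ell_\phi+1)(1+2\ell_\psi-\ell_\phi)^{1/2}(2\ell_\psi+\ell_\phi)^{1/2}\bigr)^{-1}$ of Corollary \ref{LocalConstInftyEstimate}. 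Summing it against $|h_R(\ell_\phi)|^2\ll(R\ell_\phi)^{-3}$ over $\ell_\psi\le\ell_\phi\le2\ell_\psi$ is exactly the paper's third-range estimate, and it amounts to your $L^4$-norm fallback.
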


By using Chebyshev's inequality, we have that $$ \mathrm{vol}\left( \left\{ w\in \mathcal{O}^{\times} \backslash S^2 : \left| \frac{1}{\mathrm{vol}(B_R)} \int_{B_R(w)} |\psi(z)|^2 \: \mathrm{d}\sigma(z) - \frac{1}{\mathrm{vol}(\mathcal{O}^{\times} \backslash S^2)} \right| > c \right\} \right) \leq \frac{1}{c^2} \mathrm{Var}(\psi; R),$$ where $$ \mathrm{Var}(\psi; R) := \int_{\mathcal{O}^{\times} \backslash S^2} \left( \frac{1}{\mathrm{vol}(B_R)} \int_{B_R(w)} |\psi(z)|^2 \: \mathrm{d}\sigma(z) - \frac{1}{\mathrm{vol}(\mathcal{O}^{\times} \backslash S^2)} \right)^2 \mathrm{d}\sigma(w).$$ This is the quantity we proceed to bound. Note that, by comparing with \eqref{QUESphericalCapQuestion}, obtaining bounds (or even an asymptotic formula) for the variance $\mathrm{Var}(\psi;R)$ can be interpreted as the $L^2$-version of the shrinking target problem.

The structure of this paper is as follows. In Section \ref{Preliminaries}, we introduce all the necessary background needed to prove the main results, such as the connection between spherical harmonics and automorphic representations of $B^{\times}(\mathbb{A})$, the Watson--Ichino triple product formula, the Selberg--Harish-Chandra transform, and the Berry--Esseen inequality in our context. Then, in Section \ref{Proofs} we prove Theorems \ref{MainThm-Wass}, \ref{MainThm-AllCenters}, \ref{MainThm-SphCapDisc}, and \ref{MainThm-AlmostAllCenters}.

\section{Preliminaries}\label{Preliminaries}

\subsection{Spherical harmonics and automorphic representations of \texorpdfstring{$B^{\times}(\mathbb{A})$}{B(A)}} \label{SphHarmBackground}

The space $L^2(S^2)$ decomposes as $$ L^2(S^2) = \bigoplus_{\ell=0}^{\infty} H_\ell,$$ where $H_\ell$ is the space of harmonic homogeneous polynomials in three variables of degree $\ell$ restricted to $S^2$. In fact, $H_\ell$ is the eigenspace of the eigenvalue $\ell(\ell+1)$ of the Laplace operator $\Delta$. Focusing on the functions fixed by $\mathcal{O}^{\times}$, we have $$ L^2(\mathcal{O}^{\times} \backslash S^2) = \bigoplus_{\ell=0}^{\infty} H_{\ell}^{\mathcal{O}^{\times}}.$$ Thus, we have the following.

\begin{lemma}\label{SpectralRes}
Let $$ f_0(z) := \frac{1}{\sqrt{\mathrm{vol}(\mathcal{O}^{\times} \backslash S^2)}},$$ so that $\langle f_0, f_0 \rangle = 1$, and let $\mathcal{B}$ be an orthonormal basis of spherical harmonics of $L^2(\mathcal{O}^{\times} \backslash S^2)$. Then a function $g \in L^2(\mathcal{O}^{\times} \backslash S^2)$ has the spectral expansion, valid in the $L^2$-sense, of the form $$ g(z) = \langle g, f_0 \rangle f_0(z) + \sum_{f\in \mathcal{B}} \langle g,f\rangle f(z).$$ Moreover, Parseval's identity holds: for $g_1, g_2 \in L^2(\mathcal{O}^{\times} \backslash S^2)$, we have $$ \langle g_1, g_2 \rangle = \langle g_1, f_0\rangle \langle f_0, g_2\rangle + \sum_{f\in \mathcal{B}} \langle g_1, f\rangle \langle f, g_2\rangle.$$
\end{lemma}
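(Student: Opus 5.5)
The plan is to deduce the lemma from the orthogonal decomposition $L^2(\mathcal{O}^{\times} \backslash S^2) = \bigoplus_{\ell\geq 0} H_{\ell}^{\mathcal{O}^{\times}}$ stated just before it, together with standard Hilbert space theory.

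\emph{Step 1: the constant term.} Each $H_{\ell}$ is finite-dimensional, and the $H_{\ell}$ are mutually orthogonal in $L^2(S^2)$, being eigenspaces of the self-adjoint operator $\Delta$ for distinct eigenvalues $\ell(\ell+1)$. Restricting to $\mathcal{O}^{\times}$-invariant functions and using the inner product \eqref{innerProdbigoS2}, which is a fixed multiple of the $L^2(S^2)$ inner product on invariant functions, the subspaces $H_{\ell}^{\mathcal{O}^{\times}}$ remain mutually orthogonal and finite-dimensional. For $\ell=0$, the space $H_0 = H_0^{\mathcal{O}^{\times}}$ consists of the constants. The computation $\langle f_0,f_0\rangle = \sigma(\mathcal{O}^{\times} \backslash S^2)/\mathrm{vol}(\mathcal{O}^{\times}\backslash S^2)=1$ shows that $\{f_0\}$ is an orthonormal basis of this space.

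\emph{Step 2: assembling the basis.} Interpret $\mathcal{B}$ as a union of orthonormal bases $\mathcal{B}_{\ell}$ of $H_{\ell}^{\mathcal{O}^{\times}}$ for $\ell\geq 1$, matching $\mathcal{B} = \bigcup_{\ell\geq 1}\mathcal{B}_{\ell}$ as in the introduction. Then $\{f_0\}\cup\mathcal{B}$ is an orthonormal system whose closed linear span is the closure of $\bigoplus_{\ell} H_{\ell}^{\mathcal{O}^{\times}}$. By the decomposition cited above, this is all of $L^2(\mathcal{O}^{\times} \backslash S^2)$, so the system is an orthonormal basis.

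\emph{Step 3: the main obstacle, completeness.} The point to justify is the decomposition itself, namely completeness. I would derive it by taking $g\in L^2(\mathcal{O}^{\times}\backslash S^2)$, lifting it to an $\mathcal{O}^{\times}$-invariant function on $S^2$, and expanding it in $L^2(S^2)=\bigoplus H_\ell$, which holds by density of polynomials via Stone--Weierstrass. Let $P_\ell$ be the orthogonal projection onto $H_\ell$. Because the rotation action of $\mathcal{O}^{\times}$ commutes with $\Delta$, it commutes with each $P_\ell$. Hence averaging over $\mathcal{O}^{\times}$ shows that each $P_\ell g$ lies in $H_{\ell}^{\mathcal{O}^{\times}}$.

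\emph{Step 4: conclusion.} With the orthonormal basis in hand, the general theory of Hilbert spaces gives the expansion $g=\sum \langle g,e\rangle e$, converging in $L^2$, over $e\in\{f_0\}\cup\mathcal{B}$. It likewise gives Parseval's identity $\langle g_1,g_2\rangle=\sum_e\langle g_1,e\rangle\langle e,g_2\rangle$, which follows by polarization or directly from the continuity of the inner product.
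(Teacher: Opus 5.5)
Your proposal is correct and follows the paper's route. The paper gives no separate proof: it deduces the lemma directly from the orthogonal decomposition $L^2(\mathcal{O}^{\times}\backslash S^2)=\bigoplus_{\ell\geq 0}H_{\ell}^{\mathcal{O}^{\times}}$ and standard Hilbert space theory, and your Step 3 fills in what the paper leaves implicit, namely that each projection of an invariant $g$ onto $H_\ell$ is again invariant (note that Stone--Weierstrass must be combined with the harmonic decomposition of polynomials to yield $L^2(S^2)=\bigoplus_\ell H_\ell$, which is the standard fact the paper cites).
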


Since $H_{\ell}^{\mathcal{O}^{\times}} \subset H_{\ell}$, we have that\footnote{In fact, the Jacquet--Langlands correspondence and the formulas for the dimensions of vector spaces of newforms (see \cite[Thm.~1]{MartinDimensionsNew}) imply that $\dim{H_{\ell}^{\mathcal{O}^{\times}}} \asymp \ell$, but for our purposes we only need an upper bound.} $\dim{H_{\ell}^{\mathcal{O}^{\times}}} \ll \ell$. Hence, we obtain the following estimate which is known as the \textit{Weyl law}.

\begin{proposition}[Weyl law]\label{WeylLaw}
For all $T,t\geq 1$, we have $$ \# \{ \phi \in \mathcal{B} : T \leq \ell_{\phi} \leq T + t \} \ll Tt + t^2. $$
\end{proposition}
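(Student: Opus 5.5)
The plan is to reduce the count to dimensions of eigenspaces. Since $\mathcal{B}$ is a union over $\ell\geq 1$ of orthonormal bases $\mathcal{B}_\ell$ of $H_\ell^{\mathcal{O}^{\times}}$, every $\phi\in\mathcal{B}$ has a well-defined degree $\ell_\phi=\ell$, and the elements of $\mathcal{B}$ of a given degree $\ell$ form a linearly independent subset of $H_\ell^{\mathcal{O}^{\times}}$. Hence
\[
\#\{\phi\in\mathcal{B} : T\le \ell_\phi\le T+t\} \le \sum_{T\le \ell\le T+t} \dim H_\ell^{\mathcal{O}^{\times}},
\]
where the sum runs over integers $\ell$ in the interval. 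If one only assumes $\mathcal{B}$ is some orthonormal basis of spherical harmonics as in Lemma \ref{SpectralRes}, the same holds, because distinct eigenspaces are orthogonal and so the basis elements of degree $\ell$ lie in, and are independent in, $H_\ell^{\mathcal{O}^{\times}}$.

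Next, bound each dimension using $H_\ell^{\mathcal{O}^{\times}}\subset H_\ell$ and the classical formula $\dim H_\ell = 2\ell+1$. This formula follows by counting homogeneous polynomials of degree $\ell$ in three variables, which gives $\binom{\ell+2}{2}$, and subtracting those of degree $\ell-2$, using the decomposition $\mathcal{P}_\ell = H_\ell \oplus |x|^2\mathcal{P}_{\ell-2}$. Thus $\dim H_\ell^{\mathcal{O}^{\times}}\le 2\ell+1\le 3\ell$ for $\ell\geq 1$.

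Finally, sum over the interval. There are at most $t+1\le 2t$ integers in $[T,T+t]$ (using $t\ge1$), and each term is at most $3(T+t)$. So the count is at most $6t(T+t)\ll Tt+t^2$. No step is a genuine obstacle. The only point needing care is the bookkeeping that basis elements of a fixed degree are at most $\dim H_\ell^{\mathcal{O}^\times}$ in number, and that the integer count $t+1$ is absorbed by the assumption $t\ge 1$.
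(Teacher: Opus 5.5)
Your proposal is correct and follows essentially the same route as the paper, which derives the Weyl law directly from $H_\ell^{\mathcal{O}^{\times}}\subset H_\ell$, so that $\dim H_\ell^{\mathcal{O}^{\times}}\ll \ell$, and then sums over the at most $t+1\le 2t$ integers $\ell\in[T,T+t]$. You simply make the steps the paper leaves implicit explicit: the count of basis elements of each degree, $\dim H_\ell = 2\ell+1$, and the final bound $6t(T+t)$.
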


There is a localized version of the Weyl law, which gives estimates, on average, of the size of each $\phi \in \mathcal{B}$ at a fixed point $w \in \mathcal{O}^{\times} \backslash S^2$. The estimate is the following.

\begin{proposition}[Local Weyl law {\cite[\S 5, Corollary]{MinakshisundaramPleijelLocalWeylLaw}}]\label{LocalWeylLaw}
For all $T,t\geq 1$ and $w\in \mathcal{O}^{\times} \backslash S^2$, we have $$ \sum_{\substack{\phi \in \mathcal{B} \\ T \leq \ell_{\phi} \leq T+t}} |\phi(w)|^2 \ll Tt + t^2. $$
\end{proposition}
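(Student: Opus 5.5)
The statement is a local Weyl law on $\mathcal{O}^{\times}\backslash S^2$. I would prove it by lifting to the full sphere and using the addition theorem for spherical harmonics, which makes the estimate essentially exact.

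First, fix $\ell$ and consider the Hecke eigenfunctions $\phi \in \mathcal{B}_\ell$. These are orthonormal with respect to the inner product \eqref{innerProdbigoS2} on $\mathcal{O}^{\times}\backslash S^2$. Since they are $\mathcal{O}^{\times}$-invariant functions on $S^2$, integrating over $S^2$ simply multiplies each inner product by a fixed constant: the number of sheets, $\sigma(S^2)/\sigma(\mathcal{O}^{\times}\backslash S^2)=12$. Hence the functions $\phi/\sqrt{12}$ are orthonormal in $L^2(S^2)$ with the round measure, and they lie in $H_\ell$. Complete them to an orthonormal basis $\{Y_{\ell,m}\}_{m}$ of $H_\ell$.

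Second, apply the addition theorem:
\[
\sum_{m} |Y_{\ell,m}(w)|^2 = \frac{2\ell+1}{4\pi},
\]
which holds for every $w\in S^2$. Dropping the nonnegative terms that do not come from $\mathcal{B}_\ell$ gives
\[
\sum_{\phi\in\mathcal{B}_\ell}|\phi(w)|^2 \le \frac{12(2\ell+1)}{4\pi} \ll \ell .
\]
If one prefers not to quote the addition theorem, the same bound follows from rotation invariance. The sum $\sum_m |Y_{\ell,m}(w)|^2$ does not depend on the choice of orthonormal basis, and rotations preserve $H_\ell$ and permute its orthonormal bases, so the sum is constant in $w$. Integrating over $S^2$ then shows the constant equals $\dim H_\ell/(4\pi)=(2\ell+1)/(4\pi)$.

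Finally, sum over $T\le \ell\le T+t$:
\[
\sum_{\substack{\phi\in\mathcal{B}\\ T\le\ell_\phi\le T+t}} |\phi(w)|^2 \ll \sum_{\ell=T}^{T+t}\ell \ll (t+1)(T+t) \ll Tt+t^2,
\]
where the last step uses $t,T\ge 1$.

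I do not expect a real obstacle. The only point needing care is the normalization constant between the measures on $\mathcal{O}^{\times}\backslash S^2$ and on $S^2$, and that constant is harmless. The cited result of Minakshisundaram--Pleijel gives the general version of this estimate on compact manifolds, but on the sphere the addition theorem suffices.
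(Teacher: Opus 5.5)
Your argument is correct. It takes a different, more elementary route than the paper, which gives no proof and simply cites the general local Weyl law of Minakshisundaram--Pleijel.

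You lift $\mathcal{B}_\ell$ to the orthonormal family $\{\phi/\sqrt{12}\}$ in $L^2(S^2)$ and complete it to an orthonormal basis of $H_\ell$. The addition theorem (or your rotation-invariance argument) then gives the exact per-degree bound $\sum_{\phi\in\mathcal{B}_\ell}|\phi(w)|^2\le 3(2\ell+1)/\pi$, uniformly in $w$. Summing over the at most $t+1$ integers in $[T,T+t]$ finishes the proof.

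The cited theorem has the advantage of applying to any compact Riemannian manifold. However, it is a Tauberian asymptotic $\sum_{\ell_\phi\le\Lambda}|\phi(w)|^2\sim c\Lambda^2$ for the full counting function. Differencing an asymptotic with an $o(\Lambda^2)$ error only yields the window bound when $t\gg T$. For bounded $t$ one would need the sharp $O(\Lambda)$ remainder of Avakumovi\'c--Levitan--H\"ormander. One would also have to lift from the orbifold $\mathcal{O}^{\times}\backslash S^2$ to $S^2$ anyway, since the rotations coming from $\mathcal{O}^{\times}$ have fixed points.

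Your route gives the short-window case for free, and this matters here. In the proof of Theorem~\ref{MainThm-AllCenters}, the paper applies the bound on the windows $2\ell_\psi-2d_m\le\ell_\phi\le 2\ell_\psi-d_m$, whose width $d_m$ can be as small as $1$.
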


Similar to how there is a connection between modular forms and automorphic representations of $\mathrm{GL}_{2}$, there is a connection between spherical harmonics and automorphic representations of $B^{\times}$. This connection was mentioned in \cite[\S 2.1]{garza2025geodesicrestrictionproblemarithmetic}, but we reproduce it here for the sake of completeness.

For a global field $K$, let $\mathbb{A}_{K}$ be the ring of $K$-adèles and $\mathbb{A}_{K,\text{fin}}$ the ring of finite $K$-adèles; we denote by $\mathbb{A} := \mathbb{A}_{\mathbb{Q}}$ and $\mathbb{A}_{\text{fin}} := \mathbb{A}_{\mathbb{Q},\text{fin}}$. Since $\mathcal{O}$ has class number $1$, we have by \cite[eq.~(2.20)]{AutRepGetz} the homeomorphism 
\begin{align}
    L : \mathcal{O}^{\times} \backslash B^{\times}(\mathbb{R}) &\longrightarrow B^{\times}(\mathbb{Q}) \backslash B^{\times}(\mathbb{A}) / \widehat{\mathcal{O}}^{\times} \label{homeo2.20Bcross} \\
    \mathcal{O}^{\times} x_{\infty} &\longmapsto B^{\times}(\mathbb{Q}) (x_{\infty},1,1,1,\dots ) \widehat{\mathcal{O}}^{\times}, \nonumber
\end{align}
where $\widehat{\mathcal{O}}$ is the closure of $\mathcal{O}$ in $B(\mathbb{A}_{\text{fin}})$. 

Let $\psi \in H_{\ell}^{\mathcal{O}^{\times}}$. We can lift $\psi$ to an eigenfunction $\widehat{\psi}$ of the Casimir operator\footnote{It is a distinguished element of the center of the universal enveloping algebra of the (complex) Lie algebra associated to $B^{\times}(\mathbb{R})$. Intuitively, the Casimir operator is an analogue of the Laplacian operator for Riemannian manifolds in the context of Lie algebras. For more on this operator, see \cite[\S 10.2]{HallLieStuff}.} on $\mathcal{O}^{\times} \backslash B^{\times}(\mathbb{R})$ via the formula $$ \widehat{\psi}(x) := \psi(x\cdot k),$$ where $k$ is the north pole of $S^2$. Hence, we can lift $\widehat{\psi}$ to an automorphic form on $B^{\times}(\mathbb{A})$ given by $L_{*} \widehat{\psi} = \widehat{\psi} \circ L^{-1}$. If $\psi$ is a Hecke eigenfunction, then $L_{*} \widehat{\psi}$ generates an irreducible cuspidal automorphic representation of $B^{\times}(\mathbb{A})$ of trivial central character, denoted by $\rho_{\psi}$. Moreover, $L_{*} \widehat{\psi}$ is the unique $K^{\infty} \widehat{\mathcal{O}}^{\times}$-invariant vector in $\rho_{\psi}$. Additionally, Flath's tensor product theorem (see \cite[Thm.~6.3.4]{AutRepGetz}) gives the tensor product decomposition 
\begin{equation}
    \rho_{\psi} = \bigotimes_{p\leq \infty} {\vphantom{\sum}}' \rho_{p}. \label{Flath}
\end{equation}
The local representations $\rho_p$ are ramified only at $2$ and $\infty$. The representation $\rho_{\infty}$ has as a model the $\ell$-th irreducible representation of $\mathbb{R}^{\times} \backslash B^{\times}(\mathbb{R}) \cong \mathrm{SO}_3(\mathbb{R})$, which is a representation on the space $H_\ell$ (see \cite[Thm.~17.12]{QuantumTheoryforMath}).

\subsection{Watson--Ichino triple product formula}

In the process of proving our theorem, we will need to deal with terms of the form $|\langle |\psi|^2, \phi\rangle |^2$, where $\phi:\mathcal{O}^{\times} \backslash S^2$ is a Hecke eigenfunction. For this, we have the Watson--Ichino formula, originally proved by Watson in a special case and later generalized by Ichino. A priori, this formula is an adèlic statement: the integral of the product of three automorphic forms of $B^{\times}(\mathbb{A})$ over the quotient $\mathbb{A}^{\times} B^{\times}(\mathbb{Q}) \backslash B^{\times}(\mathbb{A})$ is equal to the central value of a triple product $L$-functions involving the Jacquet--Langlands transfers of these automorphic forms. We first mention the adèlic statement, and then present its translation to our context. We follow \cite[\S 4]{HumphriesKhan}.

\subsubsection{Adèlic Watson--Ichino formula}

For $i=1,2,3$, let $\pi_{i}^{B}$ be a (cuspidal) automorphic representation of $B^{\times}(\mathbb{A})$, $\widetilde{\pi}_{i}^{B}$ be the contragredient representation of $\pi_{i}^{B}$, $\pi_{i}$ be the Jacquet--Langlands transfer of $\pi_{i}^{B}$ to a cuspidal automorphic representation of $\mathrm{GL}_2(\mathbb{A})$, and let $\varphi_i\in \pi_{i}^{B}$ and $\widetilde{\varphi}_{i} \in \widetilde{\pi}_{i}^{B}$. Assume that $\varphi_i = \bigotimes_{v} \varphi_{i,v}$ and $\widetilde{\varphi}_{i} = \bigotimes_{v} \widetilde{\varphi}_{i,v}$ are pure tensors in their corresponding automorphic representations under the identification given in Flath's tensor product theorem. Define 
\begin{align*}
    \Phi &:= \varphi_1 \otimes \varphi_2 \otimes \varphi_3, \\
    \widetilde{\Phi} &:= \widetilde{\varphi}_1 \otimes \widetilde{\varphi}_2 \otimes \widetilde{\varphi}_3, \\
    I(\Phi \otimes \widetilde{\Phi}) &:= \left( \int_{Z(\mathbb{A}) B^{\times}(\mathbb{Q}) \backslash B^{\times}(\mathbb{A})} \varphi_1(g) \varphi_2(g) \varphi_3(g) \: \mathrm{d}^{\times}g \right) \left( \int_{Z(\mathbb{A}) B^{\times}(\mathbb{Q}) \backslash B^{\times}(\mathbb{A})} \widetilde{\varphi}_1(g) \widetilde{\varphi}_2(g) \widetilde{\varphi}_3(g) \: \mathrm{d}^{\times}g \right), \\
    \langle \Phi, \widetilde{\Phi} \rangle &:= \prod_{i=1}^{3} \left( \int_{Z(\mathbb{A}) B^{\times}(\mathbb{Q}) \backslash B^{\times}(\mathbb{A})} |\varphi_{i}(g)|^2 \: \mathrm{d}^{\times}g \int_{Z(\mathbb{A}) B^{\times}(\mathbb{Q}) \backslash B^{\times}(\mathbb{A})} |\widetilde{\varphi}_{i}(g)|^2 \: \mathrm{d}^{\times}g \right)^{1/2},
\end{align*}
where $\mathrm{d}^{\times}g$ is the Tamagawa measure, which is normalized so that $Z(\mathbb{A}) B^{\times}(\mathbb{Q}) \backslash B^{\times}(\mathbb{A})$ has volume $2$.\footnote{Notice that the volume of $Z(\mathbb{A}) B^{\times}(\mathbb{Q}) \backslash B^{\times}(\mathbb{A})$ is precisely the Tamagawa number of $B^{\times}$, which is $2$ by \cite[Thm.~3.2.1]{WeilAdelesAlgGroups}.} Similarly, for each place $v$ of $\mathbb{Q}$ with corresponding local field $\mathbb{Q}_{v}$ and maximal compact subgroup $K_v$ of $B^{\times}(\mathbb{Q}_v)$, we let
\begin{align*}
    \Phi_{v} &:= \varphi_{1,v} \otimes \varphi_{2,v} \otimes \varphi_{3,v}, \\
    \widetilde{\Phi}_{v} &:= \widetilde{\varphi}_{1,v} \otimes \widetilde{\varphi}_{2,v} \otimes \widetilde{\varphi}_{3,v}, \\
    \langle \Phi_{v}, \widetilde{\Phi}_{v} \rangle_{v} &:= \prod_{i=1}^{3} \left( \int_{K_v} |\varphi_{i,v}(k_{v})|^2 \: \mathrm{d}k_{v} \int_{K_v} |\widetilde{\varphi}_{i,v}(k_v)|^2 \: \mathrm{d}k_v \right)^{1/2}, \\
    I_v(\Phi_{v} \otimes \widetilde{\Phi}_{v}) &:= \int_{Z(\mathbb{Q}_{v}) \backslash B^{\times}(\mathbb{Q}_{v})} \prod_{i=1}^{3} \langle \pi_{i,v}^{B}(g_v) \varphi_{i,v}, \widetilde{\varphi}_{i,v} \rangle \: \mathrm{d}g_{v}, \\
    I_{v}'(\Phi_{v} \otimes \widetilde{\Phi}_{v} ) &:= \frac{L_v(1,\mathrm{ad}(\pi_{1,v})) L_v(1,\mathrm{ad}(\pi_{2,v})) L_v(1,\mathrm{ad}(\pi_{3,v}))}{\zeta_{v}(2)^2 L_v(\frac{1}{2},\pi_{1,v} \otimes \pi_{2,v} \otimes \pi_{3,v})} \frac{I_v(\Phi_{v} \otimes \widetilde{\Phi}_{v})}{\langle \Phi_{v}, \widetilde{\Phi}_{v} \rangle_v},
\end{align*}
where $$ \zeta_v(s) := \left\{ \begin{array}{ll}
    \frac{1}{1-p^{-s}} & \text{if } v \text{ is nonarchimedean and corresponds to the prime } p, \\
    \pi^{-s/2} \Gamma(s/2) & \text{if } v \text{ is archimedean}
\end{array} \right. $$ is the local factor of the Riemann zeta function $\zeta(s)$ at the place $v$, $\mathrm{d}k_{v}$ are Haar measures, and $\mathrm{d}g_{v}$ are taken in such a way that, if $\mathrm{d}^{\times} g'$ is the Tamagawa measure in $Z(\mathbb{A}) \backslash B^{\times}(\mathbb{A})$, then $$ \mathrm{d}^{\times} g' = \prod_{v} \mathrm{d}g_{v}.$$ The measures $\mathrm{d}g_{v}$ and $\mathrm{d}k_v$ are normalized as follows:
\begin{itemize}
    \item For $v$ nonarchimedean and $v\neq 2$, there exists\footnote{This follows from the fact that all maximal compact subgroups of $\mathrm{GL}_2(\mathbb{Q}_p)$ are conjugate to each other (see \cite[Thm.~7.2.16, p.~327]{BroConOes}).} an isomorphism $\theta_{v} : B^{\times}(\mathbb{Q}_v) \rightarrow \mathrm{GL}_2(\mathbb{Q}_v)$ such that $\theta_v(Z(\mathbb{Q}_v))$ is the group of constant multiples of the identity matrix and $\theta_{v}(K_v) = \mathrm{GL}_2(\mathcal{O}_{v})$. By the Iwasawa decomposition, for $g_{v}' \in \theta_{v}(Z(\mathbb{Q}_v))\backslash \mathrm{GL}_{2}(\mathbb{Q}_v)$ we can write $g_{v}' = \begin{pmatrix}
        a_v & x_v \\ 0 & 1
    \end{pmatrix} k_v$ with $x_v \in \mathbb{Q}_v$, $a_v \in \mathbb{Q}_{v}^{\times}$, and $k_v \in \mathrm{GL}_2(\mathcal{O}_{v})$, where $\mathcal{O}_v$ is the valuation ring of $\mathbb{Q}_v$. Then $$\mathrm{d}g_{v}' = \mathrm{d}x_v \: \mathrm{d}^{\times} a_v \: \mathrm{d}k_{v}',$$ where $\mathrm{d}x_{v}$ is the normalized Haar measure on $\mathbb{Q}_{v}$ such that $\mathcal{O}_{v}$ has measure $1$, $\mathrm{d}^{\times} a_{v} = \zeta_{v}(1) |a_{v}|_{v}^{-1} \: \mathrm{d}a_{v}$ is the multiplicative Haar measure on $\mathbb{Q}_{v}^{\times}$ normalized so that $\mathcal{O}_{v}^{\times}$ has measure $1$, and $\mathrm{d}k_{v}'$ is the Haar probability measure on the compact group $\mathrm{GL}_{2}(\mathcal{O}_{v})$. Finally, the measures $\mathrm{d}g_{v}$ and $\mathrm{d}k_{v}$ on $B^{\times}(\mathbb{Q}_{v})$ and $K_{v}$ are the pullbacks of $\mathrm{d}g_{v}'$ and $\mathrm{d}k_{v}'$ via $\theta_{v}$, respectively.
    \item For $v=2$, we take the measure in $\mathrm{d}g_2$ to be the Haar measure normalized as follows. Let $\mathcal{O}_2$ be the closure of $\mathcal{O}$ in $B(\mathbb{Q}_2)$. By \cite[Exercise 13.1]{Voight}, $\mathcal{O}_2$ is the valuation ring of $B(\mathbb{Q}_2)$, so that $K_2 = \mathcal{O}_{2}^{\times}$ is the maximal compact subgroup of $B^{\times}(\mathbb{Q}_2)$. We normalize $\mathrm{d}g_2$ in such a way that $\mathcal{O}_{2}^{\times}$ modulo the center $Z(\mathbb{Q}_2) = \mathbb{Q}_2^{\times}$ has volume $2$. Additionally, we normalize $\mathrm{d}k_{2}$ so that $K_{2} = \mathcal{O}_{2}^{\times}$ has volume $2$.
    \item For $v=\infty$, we identify $\mathbb{R}^{\times} \backslash B^{\times}(\mathbb{R})$ with the compact group $\mathrm{SO}_{3}(\mathbb{R})$ and take $\mathrm{d}g_{\infty}$ to be the Haar probability measure.
\end{itemize}
Let $\Lambda(s) := \prod_{v} \zeta_v(s)$ denote the completed Riemann zeta function. We then have the following.

\begin{theorem}[{\cite[Thm.~1.1]{IchinoFormula}}]\label{WatsonIchinoAdelic}
The period integral $I(\Phi \otimes \widetilde{\Phi}) / \langle \Phi, \widetilde{\Phi}\rangle$ is equal to $$ \frac{\Lambda(2)^2}{8} \left( \frac{q(\pi_1 \otimes \pi_2 \otimes \pi_3)^{1/2}}{q(\mathrm{ad}(\pi_1)) q(\mathrm{ad}(\pi_2)) q(\mathrm{ad}(\pi_3))} \right)^{-1/2} \frac{\Lambda(\frac{1}{2}, \pi_1 \otimes \pi_2 \otimes \pi_3)}{\Lambda(1, \mathrm{ad}(\pi_1)) \Lambda(1, \mathrm{ad}(\pi_2)) \Lambda(1, \mathrm{ad}(\pi_3))} \prod_{v} I_{v}'(\Phi_{v} \otimes \widetilde{\Phi}_{v}),$$ with $I_{v}'(\Phi_{v} \otimes \widetilde{\Phi}_{v})$ equal to $1$ whenever $\varphi_{1,v}, \varphi_{2,v}, \varphi_{3,v}$ and $\widetilde{\varphi}_{1,v}, \widetilde{\varphi}_{2,v}, \widetilde{\varphi}_{3,v}$ are spherical vectors at the nonarchimedean place $v$.
\end{theorem}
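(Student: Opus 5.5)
The statement is Ichino's triple product formula, cited from \cite[Thm.~1.1]{IchinoFormula}. In the paper it is imported as a black box, so the honest plan is to recall the structure of Ichino's argument rather than reprove it from scratch.

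The proof has three stages.

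\textbf{Step 1: reduce to a one-dimensional Hom space.} The functional $\Phi\otimes\widetilde\Phi \mapsto I(\Phi\otimes\widetilde\Phi)$ lies in
$$\mathrm{Hom}_{B^\times(\mathbb A)\times B^\times(\mathbb A)}\bigl(\Pi\otimes\widetilde\Pi,\mathbb C\bigr), \qquad \Pi=\pi_1^B\otimes\pi_2^B\otimes\pi_3^B,$$
with the diagonal action on each factor. By local multiplicity one for trilinear forms (Prasad), this space is at most one-dimensional. Each local integral $I_v$ converges absolutely for tempered local components; the Ramanujan bounds known at the relevant places make this work, and in general one uses bounds towards Ramanujan. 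So $I_v$ gives a nonzero element of the local Hom space.

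\textbf{Step 2: fix the proportionality constant.} For unramified $v$ with spherical data, the normalized $I_v'$ equals $1$. This is Ichino's local computation via Macdonald's formula for the spherical matrix coefficients, and it gives the final clause of the theorem. It also shows that $\prod_v I_v'$ converges to a well-defined functional. Consequently there is a constant $C$, depending only on the $\pi_i$, with
$$\frac{I}{\langle\cdot,\cdot\rangle} = C\prod_v I_v'.$$

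\textbf{Step 3: compute $C$ by Siegel--Weil.} This is the main obstacle. Following Harris--Kudla and Ichino, realize the triple product period through the seesaw between $B^\times$-triples and $\mathrm{GSp}_6$ (or $\mathrm{GO}$ of a quadratic space). The global trilinear period is expressed via the Siegel--Weil formula as a pullback of a Siegel Eisenstein series, integrated against theta lifts. Garrett's integral representation of the triple product $L$-function then unfolds this into an Euler product. Its unramified local factors are
$$\frac{L_v\bigl(\tfrac12,\pi_{1,v}\otimes\pi_{2,v}\otimes\pi_{3,v}\bigr)}{\zeta_v(2)^2\,\prod_i L_v\bigl(1,\mathrm{ad}(\pi_{i,v})\bigr)},$$
times the local integrals $I_v$.

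Comparing the two expressions identifies $C$ as $\Lambda(2)^2/8$ times the conductor factor. The $1/8$ comes from the Tamagawa number $2$ of $B^\times$ and of the associated orthogonal groups; the conductor factor comes from passing from finite to completed $L$-values. The delicate part is tracking these measure normalizations exactly, including the choices at $v=2$ and $v=\infty$ fixed above. For our application only the normalization at $2$ and $\infty$ matters, since elsewhere every vector is spherical.
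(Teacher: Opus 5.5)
Your proposal matches the paper: the paper gives no proof of this statement and simply imports Ichino's Theorem~1.1, in the normalization of Humphries--Khan, specialized to $F=\mathbb{Q}$ and the split cubic algebra $\mathbb{Q}\times\mathbb{Q}\times\mathbb{Q}$, with local measures whose product is the Tamagawa measure (this is where the constant $\Lambda(2)^2/8$ comes from), so citing it, as you do, is exactly the paper's approach. Your extra sketch of Ichino's argument is loose in Step~3: Garrett's local integrals carry no adjoint $L$-factors, which instead enter when Whittaker- or theta-normalized forms are compared with $L^2$-normalized ones, and the constant is fixed by a local comparison between the Siegel--Weil-side local integrals and the $I_v$ at the ramified and archimedean places, not by measure bookkeeping alone; nothing in the paper depends on this sketch.
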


The quantity $I_{v}'(\Phi_{v} \otimes \widetilde{\Phi}_v)$ is often called the \textit{local constant}. We note that the spherical vectors exist only when $v\notin\{\infty,2\}$ and, in fact, for our choice of vectors (which we specify below), the local vectors at $v\notin \{\infty,2\}$ are precisely the spherical vectors. Thus, we only need to compute such local constants at $2$ and at $\infty$ with a specific combination of local vectors. We now specify our choice.

We take $\pi_{1}^{B} = \pi_{2}^{B} = \rho_{\psi}$ and $\pi_{3}^{B} = \rho_{\phi}$, where $\psi \in H_{\ell_{\psi}}^{\mathcal{O}^{\times}}$ and $\psi \in H_{\ell_{\phi}}^{\mathcal{O}^{\times}}$. Note that these automorphic representations are self-dual since the Hecke eigenvalues are real, so that their contragredient representations are the same ones. The vectors we choose are $\varphi_1 = \varphi_2 = \widetilde{\varphi}_1 = \widetilde{\varphi}_2 = L_{*}\widehat{\psi} / ||L_{*}\widehat{\psi}||_{L^2}$ and $\varphi_3 = \widetilde{\varphi}_3 = L_{*}\widehat{\phi}/ ||L_{*}\widehat{\phi}||_{L^2}$, so that they are $L^2$-normalized. We denote by $\pi_{\psi}$ and $\pi_{\phi}$ the Jacquet--Langlands transfers of $\rho_{\psi}$ and $\rho_{\phi}$ to (cuspidal) automorphic representations of $\mathrm{GL}_2(\mathbb{A})$, respectively.

\subsubsection{Computation of archimedean local constant}

We start with the archimedean place. As mentioned in Section \ref{SphHarmBackground}, the local representation $\rho_{\psi,\infty}$ has as a model the $\ell_{\psi}$-th irreducible representation of $\mathbb{R}^{\times} \backslash B^{\times}(\mathbb{R}) \cong \mathrm{SO}_{3}(\mathbb{R})$, which is a representation on the space $H_{\ell_{\psi}}$. Since the local vector we consider is the projection of $L_{*}\widehat{\psi} / ||L_{*}\widehat{\phi}||_{L^2}$ to the local vector at $\infty$ and $L_{*}\widehat{\psi}/ ||L_{*}\widehat{\phi}||_{L^2}$ is $L^2$-normalized and $K^{\infty}$-invariant, then the local vectors must also be $L^2$-normalized and $K^{\infty}$-invariant. There is a unique vector with this property: define the $\ell$-th Legendre polynomial by $$P_{\ell}(t) := \frac{1}{2^{\ell} \ell!} \frac{\mathrm{d}^{\ell}}{\mathrm{d}t^{\ell}}(t^2-1)^{\ell}$$ and let $Y_{\ell_{\psi}}$ be the Legendre polynomial $P_{\ell_{\psi}}$ normalized so that $$||Y_{\ell_{\psi}}(\langle \cdot, k\rangle )||_{L^2(S^2)}=1;$$ then $Y_{\ell_{\phi}}(\langle \cdot, k\rangle )$ is $K^{\infty}$-invariant and is the unique $L^2$-normalized vector in $H_{\ell_{\psi}}$ with this property. Hence, the local vector at $\infty$ corresponding to $L_{*}\widehat{\psi}$ is precisely $Y_{\ell_{\psi}}(\langle \cdot, k\rangle )$. Similarly, if $\phi$ is of degree $\ell_{\phi}$, then the local vector at $\infty$ corresponding to $L_{*}\widehat{\phi}$ is $Y_{\ell_{\phi}}(\langle \cdot, k\rangle )$. With this in mind, we compute the local constant at $v=\infty$.

\begin{lemma}\label{LocalConstInfty}
With the above choices, if $\ell_{\phi}$ is even and $0\leq \ell_{\phi} \leq 2\ell_{\psi}$, we have
\begin{equation*}
    I_{\infty}'(\Phi_{\infty} \otimes \widetilde{\Phi}_{\infty}) = \frac{\sqrt{\pi} L_{\infty}(1,\mathrm{ad}(\pi_{\psi}))^2 L_{\infty}(1,\mathrm{ad}(\pi_{\phi}))}{8 L_{\infty}(\frac{1}{2},\pi_{\psi}\otimes \pi_{\psi} \otimes \pi_{\phi})} \times \frac{(\ell_{\phi}/2+1)^2 (\ell_{\psi} - \ell_{\phi}/2 + 1)C_{\ell_{\phi}/2}^{2} C_{\ell_{\psi} - \ell_{\phi}/2}}{(2\ell_{\psi} + \ell_{\phi} + 1) (\ell_{\psi} + \ell_{\phi}/2 + 1) C_{\ell_{\psi} + \ell_{\phi}/2}},
\end{equation*}
where $C_m := \frac{1}{m+1}\binom{2m}{m}$ is the $m$-th Catalan number. Otherwise, $I_{\infty}'(\Phi_{\infty} \otimes \widetilde{\Phi}_{\infty}) = 0$.
\end{lemma}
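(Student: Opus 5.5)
The plan is to compute the archimedean local integral $I_\infty$ directly from its definition and then divide by the normalizing $L$-factors. Since $\mathrm{d}g_\infty$ is the Haar probability measure on $\mathrm{SO}_3(\mathbb{R})$ and the local vectors are $L^2$-normalized, we have $\langle \Phi_\infty,\widetilde\Phi_\infty\rangle_\infty=1$. Hence
\begin{equation*}
I_\infty(\Phi_\infty\otimes\widetilde\Phi_\infty)=\int_{\mathrm{SO}_3(\mathbb{R})} \langle g Y_{\ell_\psi},Y_{\ell_\psi}\rangle^2\,\langle gY_{\ell_\phi},Y_{\ell_\phi}\rangle\,\mathrm{d}g .
\end{equation*}
Each matrix coefficient $\langle gY_\ell,Y_\ell\rangle$ of the zonal vector is bi-$K^\infty$-invariant, with $K^\infty \cong \mathrm{SO}_2$ the stabilizer of $k$. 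By the addition theorem it equals $P_\ell(\cos\theta)$, where $\theta$ is the angle between $g\cdot k$ and $k$ (suitably normalized, $\|Y_\ell\|=1$ in $L^2(S^2)$ with the probability measure on $\mathrm{SO}_3$). Pushing the Haar measure forward to $S^2$ and then to $t=\cos\theta$ gives $\tfrac12\,\mathrm{d}t$ on $[-1,1]$, so
\begin{equation*}
I_\infty=\frac12\int_{-1}^1 P_{\ell_\psi}(t)^2P_{\ell_\phi}(t)\,\mathrm{d}t .
\end{equation*}

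Next, I would evaluate this integral with the classical Adams--Neumann formula (equivalently, Gaunt's formula or the square of a Wigner $3j$ symbol $\begin{pmatrix}\ell_1&\ell_2&\ell_3\\0&0&0\end{pmatrix}^2$). With $L=\ell_1+\ell_2+\ell_3=2s$ and $A(m)=\frac{(2m)!}{2^m(m!)^2}$, the formula reads
\begin{equation*}
\int_{-1}^1P_{\ell_1}P_{\ell_2}P_{\ell_3}\,\mathrm{d}t=\frac{2}{2s+1}\,\frac{A(s-\ell_1)A(s-\ell_2)A(s-\ell_3)}{A(s)}
\end{equation*}
when $L$ is even and the triangle inequality holds, and the integral vanishes otherwise. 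This is exactly where the vanishing conditions in the lemma come from. Taking $\ell_1=\ell_2=\ell_\psi$ and $\ell_3=\ell_\phi$, parity forces $\ell_\phi$ to be even, and the triangle inequality forces $\ell_\phi\le 2\ell_\psi$; otherwise $I_\infty=0$. In the nonvanishing case, $s=\ell_\psi+\ell_\phi/2$, and the arguments are $s-\ell_\psi=\ell_\phi/2$ (appearing twice) and $s-\ell_\phi=\ell_\psi-\ell_\phi/2$. Writing $A(m)=2^{-m}(m+1)C_m$ turns the right-hand side into
\begin{equation*}
\frac{2}{2\ell_\psi+\ell_\phi+1}\cdot\frac{(\ell_\phi/2+1)^2(\ell_\psi-\ell_\phi/2+1)\,C_{\ell_\phi/2}^2\,C_{\ell_\psi-\ell_\phi/2}}{(\ell_\psi+\ell_\phi/2+1)\,C_{\ell_\psi+\ell_\phi/2}} .
\end{equation*}
The powers of $2$ cancel because $2(\ell_\phi/2)+(\ell_\psi-\ell_\phi/2)=s$. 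This reproduces the Catalan-number expression in the statement.

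Finally, I would multiply by the prefactor $\frac{L_\infty(1,\mathrm{ad}\pi_\psi)^2L_\infty(1,\mathrm{ad}\pi_\phi)}{\zeta_\infty(2)^2L_\infty(\frac12,\pi_\psi\otimes\pi_\psi\otimes\pi_\phi)}$ with $\zeta_\infty(2)=\pi^{-1}\Gamma(1)=\pi^{-1}$. The main obstacle is bookkeeping the constants so that the result is exactly $\sqrt\pi/8$, which means tracking three normalizations: the factor relating $\langle gY_\ell,Y_\ell\rangle$ to $P_\ell$, the measure pushforward to $[-1,1]$, and $\zeta_\infty(2)^{-2}$. I would pin these down by checking the trivial case $\ell_\psi=\ell_\phi=0$, where $I_\infty=1$, against the general formula, and by comparing with the conventions of \cite{HumphriesKhan}. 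If the raw prefactor does not come out as stated, the discrepancy should be absorbable into the relation between the Haar measure on $\mathrm{SO}_3(\mathbb{R})$ and the Tamagawa factorization, and I would recheck that normalization first.
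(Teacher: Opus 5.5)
Your computation is correct, and it follows essentially the paper's route. The paper gets the matrix coefficients from the reproducing kernel of $H_\ell$ and evaluates a Gaunt integral on $S^2$. You use that a unit zonal matrix coefficient equals $P_\ell(\cos\theta)$ and apply the Adams--Neumann formula on $[-1,1]$, which is the same identity in one variable.

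The gap is in your last paragraph: the constant $\sqrt{\pi}/8$ does not come out of this argument, and it cannot be forced. Write
\begin{equation*}
F:=\frac{(\ell_{\phi}/2+1)^{2}(\ell_{\psi}-\ell_{\phi}/2+1)\,C_{\ell_{\phi}/2}^{2}\,C_{\ell_{\psi}-\ell_{\phi}/2}}{(2\ell_{\psi}+\ell_{\phi}+1)(\ell_{\psi}+\ell_{\phi}/2+1)\,C_{\ell_{\psi}+\ell_{\phi}/2}}.
\end{equation*}
Your steps give $I_{\infty}=\tfrac12\cdot 2F=F$ exactly, with no powers of $\pi$. Then $\zeta_{\infty}(2)^{-2}=\pi^{2}$ and $\langle\Phi_{\infty},\widetilde{\Phi}_{\infty}\rangle_{\infty}=1$ yield
\begin{equation*}
I_{\infty}'(\Phi_{\infty}\otimes\widetilde{\Phi}_{\infty})=\pi^{2}\,\frac{L_{\infty}(1,\mathrm{ad}(\pi_{\psi}))^{2}L_{\infty}(1,\mathrm{ad}(\pi_{\phi}))}{L_{\infty}(\frac12,\pi_{\psi}\otimes\pi_{\psi}\otimes\pi_{\phi})}\,F,
\end{equation*}
which is $8\pi^{3/2}$ times the stated value.

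The sanity check you propose settles this against the statement. For $\ell_\phi=0$, Schur orthogonality gives $I_\infty=\int_{\mathrm{SO}_3(\mathbb{R})}P_{\ell_\psi}(\cos\theta_g)^2\,\mathrm{d}g=1/(2\ell_\psi+1)=F$. The stated formula instead forces $I_\infty=F/(8\pi^{3/2})$.

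Nor can the discrepancy be ``absorbed'' into a normalization. $\mathrm{d}g_\infty$ is fixed as the Haar probability measure. The ratio $I_\infty/\langle\Phi_\infty,\widetilde\Phi_\infty\rangle_\infty$ does not change if you rescale the inner product on $H_\ell$. So nothing is left free.

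The paper reaches $\sqrt{\pi}/8$ through a normalization slip. It uses that $\sqrt{2\ell+1}\,Y_\ell(\langle\cdot,\cdot\rangle)$ is the reproducing kernel of $H_\ell$. That holds for the probability measure $\sigma/(4\pi)$, but the rest of its computation uses $\sigma$ with $\sigma(S^2)=4\pi$, where the kernel is $\frac{2\ell+1}{4\pi}P_\ell(\langle\cdot,\cdot\rangle)$. Consequently each inner integral equals
\begin{equation*}
P_\ell(\langle g^{-1}\cdot k,k\rangle)=\sqrt{4\pi/(2\ell+1)}\,Y_\ell(\langle g^{-1}\cdot k,k\rangle),
\end{equation*}
not $(2\ell+1)^{-1/2}Y_\ell(\langle g^{-1}\cdot k,k\rangle)$. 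At $g=1$ it must equal $\|Y_\ell\|^2=1$, which confirms this. The three missing factors $\sqrt{4\pi}$ are exactly $8\pi^{3/2}$.

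So you should state your conclusion with $\pi^2$ in place of $\sqrt{\pi}/8$. The vanishing criterion and the Catalan factor are correct. Since only the $\asymp$ version (Corollary~\ref{LocalConstInftyEstimate}) is used later, nothing downstream is affected.
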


\begin{proof}
Since $\zeta_{\infty}(2) = \frac{1}{\pi}$ and $\langle \Phi_{\infty}, \widetilde{\Phi}_{\infty} \rangle_{\infty} = 1$, we only need to compute $I_{\infty}(\Phi_{\infty} \otimes \widetilde{\Phi}_{\infty})$. By definition, we have 
\begin{align}
    &I_{\infty}(\Phi_{\infty} \otimes \widetilde{\Phi}_{\infty}) \label{LocalConstInfty-eq1} \\
    &= \int_{\mathbb{R}^{\times} \backslash B^{\times}(\mathbb{R})} \left[ \left( \int_{S^2} Y_{\ell_{\psi}}(\langle g\cdot x, k \rangle ) Y_{\ell_{\psi}}(\langle x, k\rangle ) \: \mathrm{d}\sigma(x) \right)^2 \left( \int_{S^2} Y_{\ell_{\phi}}(\langle g\cdot x, k \rangle ) Y_{\ell_{\phi}}(\langle x, k\rangle ) \: \mathrm{d}\sigma(x) \right) \right] \: \mathrm{d}g. \nonumber
\end{align}
Since $\sqrt{2\ell_{\psi}+1}Y_{\ell_{\psi}}(\langle \cdot, \cdot \rangle)$ is the reproducing kernel of $H_{\ell_{\psi}}$ in $L^2(S^2)$ (see \cite[\S 2.3, Lemma 2.22]{MorimotoSphere}) and $B^{\times}(\mathbb{R})$ acts on $S^2$ by isometries, the first inner integral on the right-hand side of \eqref{LocalConstInfty-eq1} is equal to $$ \int_{S^2} Y_{\ell_{\psi}}(\langle g^{-1}\cdot k, x\rangle) Y_{\ell_{\psi}}(\langle x,k\rangle) \: \mathrm{d}\sigma(x) = \frac{1}{\sqrt{2\ell_{\psi}+1}} Y_{\ell_{\psi}}(\langle g^{-1}\cdot k, k\rangle ).$$ Similarly, the second inner integral on the right-hand side of \eqref{LocalConstInfty-eq1} is equal to $\frac{1}{\sqrt{2\ell_{\phi}+1}} Y_{\ell_{\phi}}(\langle g^{-1}\cdot k, k\rangle )$. Hence, $$ I_{\infty}(\Phi_{\infty} \otimes \widetilde{\Phi}_{\infty}) = \frac{1}{(2\ell_{\psi}+1)\sqrt{2\ell_{\phi}+1}} \int_{\mathbb{R}^{\times} \backslash B^{\times}(\mathbb{R})} Y_{\ell_{\psi}}(\langle g^{-1} \cdot k, k\rangle )^2 \: Y_{\ell_{\phi}}(\langle g^{-1}\cdot k, k\rangle) \: \mathrm{d}g.$$ Recall that $\mathbb{R}^{\times} \backslash B^{\times}(\mathbb{R}) \cong \mathrm{SO}_{3}(\mathbb{R})$. Moreover,
\begin{align*}
    \mathrm{SO}_{3}(\mathbb{R}) / K^{\infty} &\longrightarrow S^2 \\
    gK^{\infty} &\longmapsto g\cdot k
\end{align*}
is a homeomorphism, and $K^{\infty} \cong S^2$, which possesses a Haar probability measure. From these facts and using \cite[Thm.~2.51]{FollandHarmonic}, we obtain that
\begin{align*}
    I_{\infty}(\Phi_{\infty} \otimes \widetilde{\Phi}_{\infty}) &= \frac{1}{(2\ell_{\psi}+1)\sqrt{2\ell_{\phi}+1}} \int_{\mathrm{SO}_3(\mathbb{R})} Y_{\ell_{\psi}}(\langle k, g\cdot k\rangle )^2 \: Y_{\ell_{\phi}}(\langle k, g\cdot k\rangle) \: \mathrm{d}g \\
    &= \frac{1}{(2\ell_{\psi}+1)\sqrt{2\ell_{\phi}+1}} \int_{\mathrm{SO}_3(\mathbb{R})/K^{\infty}} Y_{\ell_{\psi}}(\langle k, g\cdot k\rangle )^2 \: Y_{\ell_{\phi}}(\langle k, g\cdot k\rangle) \: \mathrm{d}g \\
    &= \frac{1}{4\pi (2\ell_{\psi}+1)\sqrt{2\ell_{\phi}+1}} \int_{S^2} Y_{\ell_{\psi}}(\langle k, x\rangle )^2 \: Y_{\ell_{\phi}}(\langle k, x\rangle) \: \mathrm{d}\sigma(x) \\
    &= \frac{1}{4\pi (2\ell_{\psi}+1)\sqrt{2\ell_{\phi}+1}} \int_{S^2} Y_{\ell_{\psi}}(\langle x, k\rangle)^2 \: Y_{\ell_{\phi}}(\langle x, k\rangle ) \: \mathrm{d}\sigma(x).
\end{align*}
By the Gaunt integral relation (see \cite[Appendix A]{CruzanProdsSpherHarms}), we obtain that $I_{\infty}(\Phi_{\infty} \otimes \widetilde{\Phi}_{\infty}) = 0$ unless $\ell_{\phi}$ is even and $0 \leq \ell_{\phi} \leq 2\ell_{\psi}$, in which case we have
\begin{align*}
    I_{\infty}(\Phi_{\infty}\otimes \widetilde{\Phi}_{\infty}) &= \frac{1}{8\pi^{3/2}} \times \frac{(\ell_{\phi}!)^2 (2\ell_{\psi}-\ell_{\phi})!}{(2\ell_{\psi} + \ell_{\phi}+1)!} \times \frac{((\ell_{\psi} + \ell_{\phi}/2)!)^2}{((\ell_{\phi}/2)!)^4 ((\ell_{\psi} - \ell_{\phi}/2)!)^2} \\
    &= \frac{(\ell_{\phi}/2+1)^2 (\ell_{\psi} - \ell_{\phi}/2 + 1)C_{\ell_{\phi}/2}^{2} C_{\ell_{\psi} - \ell_{\phi}/2}}{8\pi^{3/2} (2\ell_{\psi} + \ell_{\phi} + 1) (\ell_{\psi} + \ell_{\phi}/2 + 1) C_{\ell_{\psi} + \ell_{\phi}/2}}. \qedhere
\end{align*}
\end{proof}

Using Stirling's estimate of the Gamma function (see \cite[Thm.~C.1]{MontgomeryVaughan}) in the form $$n! \asymp n^{n+\frac{1}{2}} e^{-n},$$ we obtain the following estimate for the local constant at $\infty$.

\begin{corollary}\label{LocalConstInftyEstimate}
With the above choices, if $\ell_{\phi}$ is even and $0\leq \ell_{\phi} \leq 2\ell_{\psi}$, we have $$ I_{\infty}'(\Phi_{\infty} \otimes \widetilde{\Phi}_{\infty}) \asymp \frac{L_{\infty}(1,\mathrm{ad}(\pi_{\psi}))^2 L_{\infty}(1,\mathrm{ad}(\pi_{\phi}))}{L_{\infty}(\frac{1}{2},\pi_{\psi}\otimes \pi_{\psi} \otimes \pi_{\phi})} \times \frac{1}{(\ell_{\phi}+1) (1 + 2\ell_{\psi} - \ell_{\phi})^{1/2}(2\ell_{\psi} + \ell_{\phi})^{1/2}}.$$ Otherwise, $I_{\infty}'(\Phi_{\infty} \otimes \widetilde{\Phi}_{\infty}) = 0$.
\end{corollary}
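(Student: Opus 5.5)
We start from the exact formula of Lemma \ref{LocalConstInfty}. The vanishing statement is already contained there, so it remains to show that, for $\ell_\phi$ even with $0\le \ell_\phi\le 2\ell_\psi$,
\[
\frac{(\ell_{\phi}/2+1)^2 (\ell_{\psi} - \ell_{\phi}/2 + 1)C_{\ell_{\phi}/2}^{2} C_{\ell_{\psi} - \ell_{\phi}/2}}{(2\ell_{\psi} + \ell_{\phi} + 1) (\ell_{\psi} + \ell_{\phi}/2 + 1) C_{\ell_{\psi} + \ell_{\phi}/2}} \asymp \frac{1}{(\ell_{\phi}+1)(1+2\ell_\psi-\ell_\phi)^{1/2}(2\ell_\psi+\ell_\phi)^{1/2}}.
\]
The constant $\sqrt{\pi}/8$ and the ratio of archimedean $L$-factors can be carried along unchanged. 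Put $a=\ell_\phi/2$ and $b=\ell_\psi-\ell_\phi/2$, so that $a,b\ge 0$ are integers and $\ell_\psi+\ell_\phi/2=a+b+\ldots$; more precisely $\ell_\psi+\ell_\phi/2 = 2a+b$.

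The key input is the standard Catalan asymptotic $C_m\asymp (m+1)^{-3/2}4^m$, uniformly for all $m\ge 0$. This follows from Stirling in the form $n!\asymp n^{n+1/2}e^{-n}$ applied to $\binom{2m}{m}$ for $m\ge1$, together with $C_0=1$. The point to be careful about is uniformity when some of $a$, $b$ vanish, and writing everything with $(m+1)$ rather than $m$ handles this, since Stirling is only used where $m\ge 1$. Substituting this asymptotic, the powers of $4$ cancel exactly, because $4^{2a}\cdot 4^{b}/4^{2a+b}=1$. What remains is
\[
\frac{(a+1)^2(b+1)\,(a+1)^{-3}(b+1)^{-3/2}}{(2\ell_\psi+\ell_\phi+1)(2a+b+1)\,(2a+b+1)^{-3/2}} = \frac{(2a+b+1)^{1/2}}{(a+1)(b+1)^{1/2}(2\ell_\psi+\ell_\phi+1)}.
\]

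Now translate back to the original variables. We have $a+1\asymp \ell_\phi+1$, and $b+1=(2\ell_\psi-\ell_\phi+2)/2\asymp 1+2\ell_\psi-\ell_\phi$. Also $2a+b+1=\ell_\psi+\ell_\phi/2+1\asymp 2\ell_\psi+\ell_\phi+1$, so the expression above is $\asymp (\ell_\phi+1)^{-1}(1+2\ell_\psi-\ell_\phi)^{-1/2}(2\ell_\psi+\ell_\phi+1)^{-1/2}$. Finally, $2\ell_\psi+\ell_\phi+1\asymp 2\ell_\psi+\ell_\phi$ whenever $\ell_\psi\ge1$. In the degenerate case $\ell_\psi=\ell_\phi=0$ the stated right-hand side is undefined, so we exclude it, or equivalently read $2\ell_\psi+\ell_\phi$ as $\max(1,\cdot)$. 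This gives the claimed estimate. There is no real obstacle here beyond the bookkeeping that keeps the implied constants uniform at the endpoints $\ell_\phi=0$ and $\ell_\phi=2\ell_\psi$, and the $(m+1)$ normalization above takes care of both.
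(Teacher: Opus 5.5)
Your proposal is correct and follows the paper's route. The paper simply applies Stirling's formula to the exact expression in Lemma \ref{LocalConstInfty}, and your Catalan asymptotic $C_m\asymp (m+1)^{-3/2}4^m$, with the cancellation of the powers of $4$, is that same computation written out. Your $(m+1)$ normalization also takes care of a point the paper skips: its form $n!\asymp n^{n+1/2}e^{-n}$ degenerates at $n=0$, which is exactly what happens at the endpoints $\ell_\phi=0$ and $\ell_\phi=2\ell_\psi$.
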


\subsubsection{Computation of nonarchimedean local constant}

We now compute the local constant at $v=2$. We follow \cite[Prop. 4.5]{WoodburyLocalConst} and \cite[Thm. 2]{WatsonPhD}. We have that $B(\mathbb{Q}_2)$ is a central division algebra over $\mathbb{Q}_2$ (since $B$ is ramified at $2$). By \cite[Thm.~13.1.6, Lemma 13.3.2]{Voight}, if $\alpha=i+j \in B(\mathbb{Q}_2)$, then $\alpha^2=-2$ and $B^{\times}(\mathbb{Q}_{2}) = \{ \alpha^n : n\in \mathbb{Z} \} \times \mathcal{O}_{2}^{\times}$. Since $\varphi_{i}$ is $\mathbb{A}^{\times}$-invariant and right $\widehat{\mathcal{O}}^{\times}$-invariant, then $\varphi_{i,2}$ must be $\mathbb{Q}_{2}^{\times}$-invariant and right $\mathcal{O}_{2}^{\times}$-invariant. This implies that $\varphi_{i,2}$ is $\alpha^2$-invariant. Additionally, since $\varphi_{i}$ is $L^2$-normalized, then $\varphi_{i,2}$ is $L^2$-normalized as well, so that we may assume, without loss of generality, that $\varphi_{i,2}$ is equal to $1/\sqrt{2}$ on $\mathcal{O}_{2}^{\times}$ (due to the normalization of $\mathrm{d}k_2$). Hence, $\varphi_{i,2}$ is completely determined by its value on $\alpha$, which will be $\pm 1/\sqrt{2}$ by the above analysis. For $i=1,2,3$, let $\alpha_{i} := \sqrt{2} \varphi_{i,2}(\alpha)\in \{\pm 1\}$ and set $\varepsilon := \alpha_1 \alpha_2 \alpha_3 \in \{\pm 1\}$.

\begin{lemma}\label{LocalConst2}
Under the above choices, we have
$$ I_{2}'(\Phi_2 \otimes \widetilde{\Phi}_2) = (1+\varepsilon)\frac{9L_2(1,\mathrm{ad}(\pi_{\psi,2}))^2 L_2(1,\mathrm{ad}(\pi_{\phi,2}))}{8 L_2(\frac{1}{2}, \pi_{\psi,2} \otimes \pi_{\psi,2}\otimes \pi_{\phi,2})}. $$
\end{lemma}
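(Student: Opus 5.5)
The plan is to compute $I_2(\Phi_2\otimes\widetilde{\Phi}_2)$ and $\langle \Phi_2,\widetilde{\Phi}_2\rangle_2$ directly, using that the local representations at $2$ are one-dimensional, and then insert them into the definition of $I_2'$ together with $\zeta_2(2)=\frac{1}{1-2^{-2}}=\frac43$.

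\emph{Step 1: the local representations at $2$ are characters.} Since $B(\mathbb{Q}_2)$ is a division algebra, $\mathbb{Q}_2^{\times}\backslash B^{\times}(\mathbb{Q}_2)$ is compact. The preceding discussion shows that $\varphi_{i,2}$ is invariant under $\mathbb{Q}_2^{\times}$ and $\mathcal{O}_2^{\times}$, and that $\alpha^2=-2\in\mathbb{Q}_2^{\times}$. Using $B^{\times}(\mathbb{Q}_2)=\{\alpha^n\}\times\mathcal{O}_2^{\times}$, this gives
\[
\mathbb{Q}_2^{\times}\backslash B^{\times}(\mathbb{Q}_2)=\mathbb{Z}_2^{\times}\backslash\mathcal{O}_2^{\times}\;\sqcup\;\alpha\,\bigl(\mathbb{Z}_2^{\times}\backslash\mathcal{O}_2^{\times}\bigr).
\]
The irreducible representation generated by the nonzero vector $\varphi_{i,2}$ is then the character $\chi_i$ that is trivial on $\mathbb{Q}_2^{\times}\mathcal{O}_2^{\times}$ and satisfies $\chi_i(\alpha)=\alpha_i\in\{\pm1\}$. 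Since $\mathcal{O}_2^{\times}$ is normal in $B^{\times}(\mathbb{Q}_2)$ (it is the unit group of the valuation ring), this character is well defined. The vector is realized as $\varphi_{i,2}=\frac{1}{\sqrt2}\chi_i$.

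\emph{Step 2: the local norms.} With $\mathrm{d}k_2$ giving $K_2$ volume $2$, we get $\int_{K_2}|\varphi_{i,2}|^2\,\mathrm{d}k_2=2\cdot\frac12=1$, so $\langle \Phi_2,\widetilde{\Phi}_2\rangle_2=1$. The same computation shows that the matrix coefficient is $\langle \pi^B_{i,2}(g)\varphi_{i,2},\varphi_{i,2}\rangle=\chi_i(g)$. Here I need to check, using self-duality and the real values of $\chi_i$, that the inner product on the one-dimensional space is indeed this $K_2$-integral.

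\emph{Step 3: the local integral.} We have
\[
I_2(\Phi_2\otimes\widetilde{\Phi}_2)=\int_{\mathbb{Q}_2^{\times}\backslash B^{\times}(\mathbb{Q}_2)}\chi_1\chi_2\chi_3(g)\,\mathrm{d}g .
\]
Split the domain into the two cosets from Step 1. Each coset has volume $2$ by the normalization of $\mathrm{d}g_2$, and $\chi_1\chi_2\chi_3$ equals $1$ on the first coset and $\varepsilon$ on the second. Hence $I_2=2(1+\varepsilon)$.

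\emph{Step 4: assemble.} Substituting into the definition of $I_2'$ gives
\[
I_2'(\Phi_2\otimes\widetilde{\Phi}_2)=\frac{L_2(1,\mathrm{ad}(\pi_{\psi,2}))^2 L_2(1,\mathrm{ad}(\pi_{\phi,2}))}{(16/9)\,L_2(\frac12,\pi_{\psi,2}\otimes\pi_{\psi,2}\otimes\pi_{\phi,2})}\cdot 2(1+\varepsilon),
\]
and $2\cdot\frac{9}{16}=\frac98$, which is the claimed formula.

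The main obstacle is the bookkeeping of Haar measure normalizations in Steps 2 and 3. I would verify carefully that the "volume $2$" of $\mathcal{O}_2^{\times}$ modulo the center means that $\mathbb{Z}_2^{\times}\backslash\mathcal{O}_2^{\times}$ has volume $2$, so that the full quotient has volume $4$. I would also confirm that this matches the convention of \cite[Prop.~4.5]{WoodburyLocalConst}.
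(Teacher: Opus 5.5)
Your proposal is correct and takes essentially the same route as the paper. Both arguments split $\mathbb{Q}_2^{\times}\backslash B^{\times}(\mathbb{Q}_2)$ into the two cosets $\mathbb{Q}_2^{\times}\mathcal{O}_2^{\times}$ and $\mathbb{Q}_2^{\times}\alpha\mathcal{O}_2^{\times}$, each of volume $2$, where the product of matrix coefficients is $1$ and $\varepsilon$ respectively; this gives $I_2=2(1+\varepsilon)$, and dividing by $\zeta_2(2)^2=\frac{16}{9}$ (with $\langle \Phi_2,\widetilde{\Phi}_2\rangle_2=1$) yields $\frac{9}{8}(1+\varepsilon)$.

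Your identification of the local representations as characters $\chi_i$, using that $\mathcal{O}_2^{\times}$ is normal and the central character is trivial, makes explicit what the paper leaves implicit: why $\alpha_i\in\{\pm1\}$ and why the matrix coefficient at $\alpha$ equals $\alpha_i$. Your reading of the measure normalization, with total volume $4$, is the one the paper's computation uses.
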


\begin{proof}
Since $\zeta_2(2) = (1-2^{-2})^{-1} = 4/3$ and $\langle \Phi_2, \widetilde{\Phi}_2\rangle_2 = 1$, it suffices to compute $I_2(\Phi_2\otimes \widetilde{\Phi}_2)$. For this, since $\varphi_1, \varphi_2, \varphi_3$ are right $\mathcal{O}_{2}^{\times}$-invariant and the measure of $\mathcal{O}_{2}^{\times}$ modulo the center is $2$, we have that $$ I_2(\Phi_2 \otimes \widetilde{\Phi}_2) = 2\int_{\mathbb{Q}_{2}^{\times} \backslash B^{\times}(\mathbb{Q}_2) / \mathcal{O}_{2}^{\times}} \prod_{i=1}^{3} \langle \pi_{i,2}^{B}(g_2) \varphi_{i,2}, \widetilde{\varphi}_{i,2}\rangle_{2} \: \mathrm{d}g_2.$$ The set $\mathbb{Q}_{2}^{\times} \backslash B^{\times}(\mathbb{Q}_2) / \mathcal{O}_{2}^{\times}$ has only two elements: $\mathbb{Q}_{2}^{\times} \mathcal{O}_{2}^{\times}$ and $\mathbb{Q}_{2}^{\times} \alpha \mathcal{O}_{2}^{\times}$. It follows that $$ I_2(\Phi_2 \otimes \widetilde{\Phi}_2) = 2\prod_{i=1}^{3} \langle \pi_{i,2}^{B}(1)\varphi_{i,2}, \widetilde{\varphi}_{i,2}\rangle_{2} + 2\prod_{i=1}^{3} \langle \pi_{i,2}^{B}(\alpha)\varphi_{i,2}, \widetilde{\varphi}_{i,2}\rangle_{2}.$$ Using the above analysis, we obtain $$ I_2(\Phi_2 \otimes \widetilde{\Phi}_2) = 2\prod_{i=1}^{3} \langle \varphi_{i,2}, \widetilde{\varphi}_{i,2}\rangle_{2} + 2\prod_{i=1}^{3} \langle \varphi_{i,2}(\cdot \: \alpha), \widetilde{\varphi}_{i,2}\rangle_{2} = 2 + 2\varepsilon,$$ from where the result follows.
\end{proof}

We proceed to estimate the local $L$-factors in Lemma \ref{LocalConst2}. The local $L$-factors of the adjoint $L$-function of $\pi_{\psi}$ and $\pi_{\phi}$ at the place $2$ were computed in \cite[\S 4.7]{garza2025geodesicrestrictionproblemarithmetic}, from where we know that $$L_2(1,\mathrm{ad}(\pi_{\psi,2})) = L_2(1,\mathrm{ad}(\pi_{\phi,2})) = 4/3.$$ Hence, we concentrate on computing $L_2(\frac{1}{2}, \pi_{\psi,2} \otimes \pi_{\psi,2} \otimes \pi_{\phi,2})$. For the necessary background on the following discussion, see \cite[\S 7]{localLanglandsGL2}, \cite[\S 1]{IntroLanglands}, and \cite{NTBTate}.

Recall that $\pi_{\psi}$ has conductor $2$ since it corresponds to $f_{\psi}$, as constructed in \eqref{JLModForm}, and $f_{\psi}$ has level $2$. Additionally, $\pi_{\psi}$ has trivial central character since $f_{\psi}$ has trivial character. Then, by \cite[Prop.~2.8]{LocalComponents}, we know that $\pi_{\psi,2}$ is the twist of the Steinberg representation by a (unitary) unramified character $\chi_{\psi}$ of $\mathbb{Q}_{2}^{\times}$, i.e.~$\pi_{\psi,2} \simeq \mathrm{St}\otimes \chi_{\psi}$, with $\chi_{\psi}^{2} = 1$ since the central character of $\mathrm{St}\otimes \chi_{\psi}$ is $\chi_{\psi}^2$. Consequently, $\pi_{\psi,2}$ corresponds under the local Langlands correspondence to the Weil--Deligne parameters $(\tau_{\psi}, V_{\psi}, N_{\psi})$, where $\tau_{\psi}:W_{\mathbb{Q}_2} \rightarrow \mathrm{GL}_2(\mathbb{C})$ is a representation of the Weil group of $\mathbb{Q}_2$ on the $\mathbb{C}$-vector space $V_{\psi}$ such that 
\begin{equation}
    \tau_{\psi}(\Psi) = \begin{pmatrix}
        \chi_{\psi}(\Psi)2^{-1/2} & 0 \\ 0 & \chi_{\psi}(\Psi) 2^{1/2}
    \end{pmatrix}, \label{imageFrob}
\end{equation}
where $\Psi$ is the geometric Frobenius, and 
\begin{equation}
    N_{\psi} = \begin{pmatrix}
        0 & 1 \\ 0 & 0
    \end{pmatrix} \label{NilpotentEndo}
\end{equation}
is a nilpotent endomorphism of $\mathbb{C}^2$. Similarly, $\pi_{\phi,2} \simeq \mathrm{St}\otimes \chi_{\phi}$ where $\chi_{\phi}$ is a (unitary) unramified character of of $\mathbb{Q}_{2}^{\times}$, and $\pi_{\phi,2}$ corresponds under the local Langlands correspondence to the Weil--Deligne parameters $(\tau_{\phi}, V_{\phi}, N_{\phi})$ that satisfy similar properties as described in \eqref{imageFrob} and \eqref{NilpotentEndo}.

It follows that $\pi_{\psi,2} \otimes \pi_{\psi,2} \otimes \pi_{\phi,2}$ corresponds under the local Langlands correspondence to the Weil--Deligne parameters $(\tau, V, N)$, where 
\begin{align*}
    \tau &= \tau_{\psi} \otimes \tau_{\psi} \otimes \tau_{\phi}, \\
    V &= V_{\psi} \otimes V_{\psi} \otimes V_{\phi}, \\
    N &= N_{\psi} \otimes \mathrm{Id}_{V_{\psi}} \otimes \mathrm{Id}_{V_{\phi}} + \mathrm{Id}_{V_{\psi}} \otimes N_{\psi} \otimes \mathrm{Id}_{V_{\phi}} + \mathrm{Id}_{V_{\psi}} \otimes \mathrm{Id}_{V_{\psi}} \otimes N_{\phi}.
\end{align*}
Let $\{v_1, v_2\}$ be a basis of $V_{\psi}$ such that \eqref{imageFrob} and \eqref{NilpotentEndo} hold. Similarly, let $\{w_1, w_2\}$ be a basis of $V_{\phi}$ such that the corresponding equalities hold. Then, a basis for $V_{\psi} \otimes V_{\psi} \otimes V_{\phi}$ is given by $$ \{v_1 \otimes v_1 \otimes w_1, v_1 \otimes v_1 \otimes w_2, v_1 \otimes v_2 \otimes w_1, v_1 \otimes v_2 \otimes w_2, v_2 \otimes v_1 \otimes w_1, v_2 \otimes v_1 \otimes w_2, v_2 \otimes v_2 \otimes w_1, v_2 \otimes v_2 \otimes w_2\}. $$ Under this basis, we have
\begin{align}
    \tau(\Psi) &= \chi_{\psi}(\Psi)^2 \chi_{\phi}(\Psi) \: \mathrm{diag}\left(2^{-3/2}, 2^{-1/2}, 2^{-1/2}, 2^{1/2}, 2^{-1/2}, 2^{1/2}, 2^{1/2}, 2^{3/2}\right) \label{imageFrobTensorProd} \\
    &= \chi_{\phi}(\Psi) \: \mathrm{diag}\left(2^{-3/2}, 2^{-1/2}, 2^{-1/2}, 2^{1/2}, 2^{-1/2}, 2^{1/2}, 2^{1/2}, 2^{3/2}\right), \nonumber
\end{align}
where $\mathrm{diag}(x_1, \dots, x_n)$ is the $n\times n$ diagonal matrix with $x_1, \dots, x_n$ being the entries in the diagonal, and 
\begin{equation}
    N = \begin{pmatrix}
        0 & 1 & 1 & 0 & 1 & 0 & 0 & 0 \\
        0 & 0 & 0 & 1 & 0 & 1 & 0 & 0 \\
        0 & 0 & 0 & 1 & 0 & 0 & 1 & 0 \\
        0 & 0 & 0 & 0 & 0 & 0 & 0 & 1 \\
        0 & 0 & 0 & 0 & 0 & 1 & 1 & 0 \\
        0 & 0 & 0 & 0 & 0 & 0 & 0 & 1 \\
        0 & 0 & 0 & 0 & 0 & 0 & 0 & 1 \\
        0 & 0 & 0 & 0 & 0 & 0 & 0 & 0
    \end{pmatrix}. \label{NilpotentTensorProd}
\end{equation}
Since the last five columns of $N$ are linearly independent and the first three can be obtained from the fifth one, we have that $N$ has rank $5$, so that $\mathrm{ker}(N)$ has dimension $3$. Moreover, from \eqref{NilpotentTensorProd}, we can see that a basis for $\mathrm{ker}(N)$ is given by $$\{v_1\otimes v_1\otimes w_1, v_2 \otimes v_1 \otimes w_1 - v_1\otimes v_1\otimes w_2, v_1 \otimes v_2 \otimes w_1 - v_1\otimes v_1 \otimes w_2\}.$$ By \eqref{imageFrobTensorProd}, we have that $\tau(\Psi)$ acts on the first generator by multiplication by $\chi_{\phi}(\Psi) 2^{-3/2}$, whereas $\tau(\Psi)$ acts on the second and third generators by multiplication by $\chi_{\phi}(\Psi) 2^{-1/2}$. It follows that 
\begin{align*}
    &L_2\left(\frac{1}{2}, \pi_{\psi,2} \otimes \pi_{\psi,2} \otimes \pi_{\phi,2} \right) \\
    &= \mathrm{det}\left( \mathrm{Id}_{\mathrm{ker}(N)} - \tau(\Psi)|_{\mathrm{ker}(N)} 2^{-1/2} \right)^{-1} \\
    &= \frac{1}{\left(1 - \chi_{\phi}(\Psi) 2^{-2} \right) \left(1 - \chi_{\phi}(\Psi) 2^{-1} \right) \left(1 - \chi_{\phi}(\Psi) 2^{-1} \right)},
\end{align*}
which is bounded when $\psi$ and $\phi$ vary since $\chi_{\phi}$ is a unitary character of $\mathbb{Q}_{2}^{\times}$. From the previous discussion, we conclude the following.

\begin{corollary}\label{LocalConst2Estimate}
Under the above choices, we have $$ I_{2}'(\Phi_2 \otimes \widetilde{\Phi}_2) \asymp 1.$$
\end{corollary}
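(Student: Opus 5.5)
The plan is to combine Lemma \ref{LocalConst2} with the computation of the local $L$-factors at $2$ carried out just above. Each factor in the formula of Lemma \ref{LocalConst2} will be shown to be bounded above and below by absolute positive constants, uniformly in $\psi$ and $\phi$.

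First, I would insert the values $L_2(1,\mathrm{ad}(\pi_{\psi,2})) = L_2(1,\mathrm{ad}(\pi_{\phi,2})) = 4/3$ from \cite[\S 4.7]{garza2025geodesicrestrictionproblemarithmetic}. The numerator $9 L_2(1,\mathrm{ad}(\pi_{\psi,2}))^2 L_2(1,\mathrm{ad}(\pi_{\phi,2}))/8$ is then the absolute constant $9\cdot (4/3)^3/8 = 8/3$.

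Second, I would bound the triple product factor from both sides. From the kernel computation of $N$,
\[
L_2\left(\tfrac12,\pi_{\psi,2}\otimes\pi_{\psi,2}\otimes\pi_{\phi,2}\right)^{-1} = \left(1-\chi_{\phi}(\Psi)2^{-2}\right)\left(1-\chi_{\phi}(\Psi)2^{-1}\right)^2 .
\]
Since $\chi_\phi$ is unitary, $|\chi_\phi(\Psi)|=1$. The triangle inequality then gives $3/4\le |1-\chi_\phi(\Psi)2^{-2}|\le 5/4$ and $1/2\le |1-\chi_\phi(\Psi)2^{-1}|\le 3/2$. In fact $\chi_\phi^2=1$ and $\chi_\phi$ is unramified, so $\chi_\phi(\Psi)=\pm1$ and the reciprocal takes only the two values $3/16$ and $45/16$. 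Either way, $L_2(\tfrac12,\dots)$ lies between two absolute positive constants, namely $16/45$ and $16/3$. Hence $I_2'(\Phi_2\otimes\widetilde\Phi_2)$ equals $(1+\varepsilon)$ times a quantity lying in a fixed compact subinterval of $(0,\infty)$.

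Third, I would handle the factor $1+\varepsilon$, which is the only delicate point. Here $\varepsilon=\alpha_1\alpha_2\alpha_3=\alpha_\psi^2\alpha_\phi=\alpha_\phi$ is the Atkin--Lehner-type sign of $\phi$ at $2$. When $\varepsilon=1$ we have $1+\varepsilon=2$, and the two previous steps give $I_2'\asymp 1$ with explicit constants; this is the content of the corollary. When $\varepsilon=-1$, Lemma \ref{LocalConst2} gives $I_2'=0$. In that case the global period $\langle|\psi|^2,\phi\rangle$ vanishes by Theorem \ref{WatsonIchinoAdelic}, so such $\phi$ never enter the later arguments. Accordingly I would read the corollary as the two-sided bound on the nonvanishing local constant; in every case the upper bound $I_2'\ll1$ holds.

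The main obstacle is ensuring the lower bound on $|1-\chi_\phi(\Psi)2^{-k}|$ is uniform. This reduces to unitarity of $\chi_\phi$, which was already established from the trivial central character via $\chi_\phi^2=1$.
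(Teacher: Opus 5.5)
Your proposal is correct and follows the paper's route: insert $L_2(1,\mathrm{ad}(\pi_{\psi,2}))=L_2(1,\mathrm{ad}(\pi_{\phi,2}))=4/3$ into Lemma~\ref{LocalConst2} and use $|\chi_\phi(\Psi)|=1$ to bound $L_2(\tfrac12,\pi_{\psi,2}\otimes\pi_{\psi,2}\otimes\pi_{\phi,2})$ from above and below, with explicit constants where the paper only says ``bounded''. Your handling of the factor $1+\varepsilon$ is more careful than the paper's, which passes over the fact that $I_2'(\Phi_2\otimes\widetilde{\Phi}_2)=0$ when $\varepsilon=-1$; as you note, only the upper bound is used later, and in that case the global period $\langle|\psi|^2,\phi\rangle$ vanishes anyway.
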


\subsubsection{Classical Watson--Ichino formula}

We restate the adèlic Watson--Ichino formula in our context. Let $\psi\in H_{\ell_{\psi}}^{\mathcal{O}^{\times}}$ and $\phi \in H_{\ell_{\phi}}^{\mathcal{O}^{\times}}$ be Hecke eigenfunctions.

\begin{corollary}\label{WatsonIchinoClassical}
If $\ell_{\phi}$ is even and $0<\ell_{\phi} \leq 2\ell_{\psi}$, we have $$\left| \langle |\psi|^2, \phi \rangle \right|^2 \asymp \frac{1}{(\ell_{\phi}+1) (1+2\ell_{\psi} - \ell_{\phi})^{1/2}(2\ell_{\psi} + \ell_{\phi})^{1/2}} \times \frac{L(\frac{1}{2}, \pi_{\psi} \otimes \pi_{\psi} \otimes \pi_{\phi})}{L(1, \mathrm{ad}(\pi_{\psi}))^2 L(1, \mathrm{ad}(\pi_{\phi}))}.$$ If $\ell_{\phi}=0$, then $\langle |\psi|^2, \phi \rangle = 1$. Otherwise, $\langle |\psi|^2, \phi\rangle = 0$.
\end{corollary}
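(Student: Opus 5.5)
The plan is to specialize the adèlic formula of Theorem \ref{WatsonIchinoAdelic} to the vectors fixed above, translate the adèlic period back to $\langle |\psi|^2,\phi\rangle$, and insert the local constants from Corollaries \ref{LocalConstInftyEstimate} and \ref{LocalConst2Estimate}.

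\emph{Adèlic to classical.} Via the homeomorphism \eqref{homeo2.20Bcross} and the lifts $\psi\mapsto \widehat{\psi}\mapsto L_*\widehat{\psi}$, the integral of $\varphi_1\varphi_2\varphi_3$ over $Z(\mathbb{A})B^{\times}(\mathbb{Q})\backslash B^{\times}(\mathbb{A})$ descends to an integral over $\mathcal{O}^{\times}\backslash B^{\times}(\mathbb{R})/\mathbb{R}^{\times}K^{\infty}\cong \mathcal{O}^{\times}\backslash S^2$. Each $L_*\widehat{\psi}$ is $\widehat{\mathcal{O}}^{\times}$-invariant and right $K^\infty$-invariant, so this is legitimate. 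The descended integral is a fixed constant times $\int_{\mathcal{O}^{\times}\backslash S^2}\psi^2\phi\,\mathrm{d}\sigma$. The constant depends only on the volume normalizations: Tamagawa volume $2$ against $\sigma(\mathcal{O}^{\times}\backslash S^2)=\pi/3$. The $L^2$-normalizations of the $\varphi_i$ cancel in the same way. Since $\psi$ can be taken real (the Hecke eigenspaces are one-dimensional and the Hecke operators are real), $\psi^2=|\psi|^2$. Both $\varphi_i$ and $\widetilde{\varphi}_i$ are the same real vectors, so $I(\Phi\otimes\widetilde{\Phi})/\langle\Phi,\widetilde{\Phi}\rangle \asymp |\langle|\psi|^2,\phi\rangle|^2$ with absolute implied constants.

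\emph{Assembling the local data.} On the right-hand side of Theorem \ref{WatsonIchinoAdelic}, the local constants at $v\ne 2,\infty$ equal $1$ because the vectors are spherical. By Corollary \ref{LocalConst2Estimate}, $I_2'\asymp 1$. This estimate presupposes $\varepsilon=1$; if $\varepsilon=-1$ the whole period vanishes, and the $\asymp$ statement should be read with that proviso or checked against the Atkin--Lehner sign. Indeed $\alpha_1=\alpha_2$ forces $\varepsilon=\alpha_3$, and when $\varepsilon=-1$ the identity holds trivially in the form $0=0$ provided one also knows the central $L$-value vanishes by the root number. I expect this sign bookkeeping to be the main subtle point. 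At $\infty$, Corollary \ref{LocalConstInftyEstimate} gives the factor $(\ell_\phi+1)^{-1}(1+2\ell_\psi-\ell_\phi)^{-1/2}(2\ell_\psi+\ell_\phi)^{-1/2}$ times $L_\infty(1,\mathrm{ad}\,\pi_\psi)^2L_\infty(1,\mathrm{ad}\,\pi_\phi)/L_\infty(\tfrac12,\pi_\psi\otimes\pi_\psi\otimes\pi_\phi)$.

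\emph{Cancellations.} The archimedean $L$-factor ratio cancels exactly against the archimedean parts of the completed $\Lambda$'s. At $2$, the factors $L_2(1,\mathrm{ad})=4/3$ and $L_2(\tfrac12,\cdot)$ are bounded above and below by the computation preceding Corollary \ref{LocalConst2Estimate}, so passing from the completed to the finite $L$-functions only costs bounded factors. The conductor term is also bounded, since all conductors are powers of $2$ of bounded size, so its contribution is $\asymp 1$. This leaves $L(\tfrac12,\pi_\psi\otimes\pi_\psi\otimes\pi_\phi)/(L(1,\mathrm{ad}\,\pi_\psi)^2L(1,\mathrm{ad}\,\pi_\phi))$, as claimed.

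\emph{Remaining cases.} When $\ell_\phi=0$, $\phi=f_0$ up to normalization, and $\langle|\psi|^2,\phi\rangle=1$ follows from $\langle\psi,\psi\rangle=1$, taking the normalization as in the statement. When $\ell_\phi$ is odd or $\ell_\phi>2\ell_\psi$, the period vanishes. This can be seen from Lemma \ref{LocalConstInfty}, since $I_\infty'=0$ there, or directly: $|\psi|^2$ is a polynomial of degree $2\ell_\psi$ that is even under $x\mapsto -x$, hence it lies in $\bigoplus_{m\le\ell_\psi}H_{2m}$, which is orthogonal to $H_{\ell_\phi}$.
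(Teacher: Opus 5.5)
Your route is the paper's: specialize Theorem~\ref{WatsonIchinoAdelic} to the fixed vectors, and identify $I(\Phi\otimes\widetilde{\Phi})/\langle\Phi,\widetilde{\Phi}\rangle$ with an absolute constant times $|\langle|\psi|^2,\phi\rangle|^2$ via \eqref{HomeoSphereAdeles} and the volume normalizations. Then insert Corollaries~\ref{LocalConstInftyEstimate} and \ref{LocalConst2Estimate}, noting that the conductor and $2$-adic factors are bounded. For the vanishing cases you argue directly: $|\psi|^2=\psi\overline{\psi}$ is the restriction of a homogeneous polynomial of degree $2\ell_\psi$, hence lies in $\bigoplus_{m\le\ell_\psi}H_{2m}$. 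This is a more elementary substitute for the paper's appeal to $I_\infty'=0$ through the Gaunt relation.

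The sign issue you flag is genuine, and the paper's proof passes over it. Corollary~\ref{LocalConst2Estimate} only holds when $\varepsilon=1$, since Lemma~\ref{LocalConst2} gives $I_2'=0$ when $\varepsilon=-1$. This case does occur. Take $\phi\propto(x^2-y^2)(y^2-z^2)(z^2-x^2)$, which is harmonic of degree $6$ and $\mathcal{O}^{\times}$-invariant, but odd under the rotation $z\mapsto(1+i)\cdot z$. On the other hand, $|\psi|^2$ is even under that rotation, because $T_2\psi(z)=\psi((1+i)\cdot z)=\pm\psi(z)$. Hence $\langle|\psi|^2,\phi\rangle=0$ for every Hecke eigenfunction $\psi$.

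So your ``proviso'' has to be proved, not assumed. The proof is short:
\begin{itemize}
\item Since $\rho_{\phi,2}=\chi_\phi\circ\mathrm{Nrd}$ is the Jacquet--Langlands transfer of $\mathrm{St}\otimes\chi_\phi$ and $\mathrm{Nrd}(\alpha)=2$, you get $\varepsilon=\alpha_3=\chi_\phi(2)$.
\item The local root numbers of $\pi_\psi\otimes\pi_\psi\otimes\pi_\phi$ are $\epsilon_p=1$ for odd $p$ and $\epsilon_2=-\chi_\psi(2)^2\chi_\phi(2)=-\varepsilon$.
\item At infinity, $\epsilon_\infty=-1$, because the weights $2\ell_\psi+2,\,2\ell_\psi+2,\,2\ell_\phi+2$ are balanced exactly when $\ell_\phi\le 2\ell_\psi$. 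Equivalently, by Prasad's dichotomy, $\epsilon_v=-1$ exactly when the local division algebra carries a nonzero invariant trilinear form. At $\infty$ this is the triangle inequality for $\mathrm{SO}_3(\mathbb{R})$; at $2$ it is the condition $\chi_\psi^2\chi_\phi=1$, i.e.\ $\varepsilon=1$.
\item Hence the global root number equals $\varepsilon$. For $\varepsilon=-1$ the completed triple product $L$-function vanishes at $s=\tfrac12$.
\item Since $L_\infty(\tfrac12,\cdot)$ and the explicitly computed $L_2(\tfrac12,\cdot)$ are finite and nonzero, the finite central value vanishes too, so both sides of the corollary are $0$.
\end{itemize}

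With this added your proof is complete. The paper's later applications use only the upper bound, for which $I_2'\ll 1$ suffices regardless of $\varepsilon$.
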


\begin{proof} 
The case $\ell_{\phi}=0$ is immediate, so we assume $\ell_{\phi}>0$. We let $\varphi_1 = \varphi_2 = \widetilde{\varphi}_1 = \widetilde{\varphi}_2 = L_{*}\widehat{\psi}$ and $\varphi_3 = \widetilde{\varphi}_3 = L_{*}\widehat{\phi}$. By Theorem \ref{WatsonIchinoAdelic} and Corollaries \ref{LocalConstInftyEstimate} and \ref{LocalConst2Estimate}, we have\footnote{Note that $I(\Phi \otimes \widetilde{\Phi})/\langle \Phi \otimes \widetilde{\Phi}\rangle$ is a trilinear form, so scaling the vectors we use for $\varphi_i$ does not change the value of the trilinear form; in particular, it does not change if we consider $\varphi_1 = L_{*}\widehat{\psi}$ or $\varphi_1 = L_{*}\widehat{\psi} / ||L_{*}\widehat{\psi}||_{L^2}$.}
\begin{align}
    \frac{I(\Phi \otimes \widetilde{\Phi})}{\langle \Phi, \widetilde{\Phi} \rangle} &= \frac{\Lambda(2)^2}{8} \left( \frac{q(\pi_1 \otimes \pi_2 \otimes \pi_3)^{1/2}}{q(\mathrm{ad}(\pi_1)) q(\mathrm{ad}(\pi_2)) q(\mathrm{ad}(\pi_3))} \right)^{-1/2} \frac{\Lambda(\frac{1}{2}, \pi_1 \otimes \pi_2 \otimes \pi_3)}{\Lambda(1, \mathrm{ad}(\pi_1)) \Lambda(1, \mathrm{ad}(\pi_2)) \Lambda(1, \mathrm{ad}(\pi_3))} \label{WatsonIchinoClassical-eq1} \\
    &\quad \times I_{\infty}'(\Phi_{\infty} \otimes \widetilde{\Phi}_{\infty}) I_{2}'(\Phi_{2} \otimes \widetilde{\Phi}_{2}) \nonumber \\
    &\asymp \frac{1}{(\ell_{\phi}+1) (1+2\ell_{\psi} - \ell_{\phi})^{1/2}(2\ell_{\psi} + \ell_{\phi})^{1/2}} \times \frac{L(\frac{1}{2}, \pi_{\psi} \otimes \pi_{\psi} \otimes \pi_{\phi})}{L(1, \mathrm{ad}(\pi_{\psi}))^2 L(1, \mathrm{ad}(\pi_{\phi}))} \nonumber
\end{align} 
if $\ell_{\phi} \leq 2\ell_{\psi}$; otherwise, $I(\Phi \otimes \widetilde{\Phi})=0$. Now, for $i=1,2,3$, we know that $\varphi_i$ is invariant under $\widehat{\mathcal{O}}^{\times}$, where $\mathcal{O}$ is the ring of Hurwitz integers in $B(\mathbb{Q})$ and $\widehat{\mathcal{O}}$ is its closure in $B(\mathbb{A}_{\text{fin}})$. The Tamagawa measure is constructed in such a way that the volume of $\widehat{\mathcal{O}}^{\times}$ in $\mathbb{A}^{\times} B^{\times}(\mathbb{Q}) \backslash B^{\times}(\mathbb{A})$ is $1$. It follows that 
\begin{align*}
    \langle \Phi, \widetilde{\Phi} \rangle &= \prod_{i=1}^{3} \left( \int_{\mathbb{A}^{\times} B^{\times}(\mathbb{Q}) \backslash B^{\times}(\mathbb{A}) / \widehat{\mathcal{O}}^{\times}} |\varphi_{i}(g)|^2 \: \mathrm{d}^{\times}g \int_{\mathbb{A}^{\times} B^{\times}(\mathbb{Q}) \backslash B^{\times}(\mathbb{A}) / \widehat{\mathcal{O}}^{\times}} |\widetilde{\varphi}_{i}(g)|^2 \: \mathrm{d}^{\times}g \right)^{1/2} \\
    &= \prod_{i=1}^{3} \int_{\mathbb{A}^{\times} B^{\times}(\mathbb{Q}) \backslash B^{\times}(\mathbb{A}) / \widehat{\mathcal{O}}^{\times}} |\varphi_{i}(g)|^2 \: \mathrm{d}^{\times}g.
\end{align*}
However, by the homeomorphism \eqref{homeo2.20Bcross}, we have that the map
\begin{align}
    \mathcal{O}^{\times} \backslash S^2 &\longrightarrow \mathbb{A}^{\times} B^{\times}(\mathbb{Q}) \backslash B^{\times}(\mathbb{A}) / \widehat{\mathcal{O}}^{\times} \label{HomeoSphereAdeles} \\
    \mathcal{O}^{\times} g_{\infty} &\longmapsto \mathbb{A}^{\times} B^{\times}(\mathbb{Q}) (g_{\infty},1,1,\dots) \widehat{\mathcal{O}}^{\times} \nonumber
\end{align}
is a homeomorphism. Using this homeomorphism, the construction of $\varphi_i$ from $\psi$ and $\phi$, and considering the normalization of volumes, we obtain that 
\begin{equation}
    \langle \Phi, \widetilde{\Phi} \rangle = \left( \frac{1}{2\pi} \int_{S^2} |\psi(z)|^{2} \: \mathrm{d}\sigma(z) \right)^2 \left( \frac{1}{2\pi} \int_{S^2} |\phi(z)|^{2} \: \mathrm{d}\sigma(z) \right) = \frac{1}{8\pi^3} \label{WatsonIchinoClassical-eq2}
\end{equation}
since $\psi$ and $\phi$ are $L^2(S^2)$-normalized. By a similar reasoning and using the homeomorphism \eqref{HomeoSphereAdeles}, since we took Hecke eigenfunctions to be real-valued, we have that 
\begin{align}
    I(\Phi \otimes \widetilde{\Phi}) &= \left( \int_{\mathbb{A}^{\times} B^{\times}(\mathbb{Q}) \backslash B^{\times}(\mathbb{A})} |\varphi_1(g)|^2 \varphi_3(g) \: \mathrm{d}^{\times}g \right)^2 \label{WatsonIchinoClassical-eq3} \\
    &= \left( \int_{\mathbb{A}^{\times} B^{\times}(\mathbb{Q}) \backslash B^{\times}(\mathbb{A}) / \widehat{\mathcal{O}}^{\times}} |\varphi_1(g)|^2 \varphi_3(g) \: \mathrm{d}^{\times}g \right)^2 \nonumber \\
    &= \left( \frac{6}{\pi} \int_{\mathcal{O}^{\times} \backslash S^2} |\psi(z)|^2 \phi(z) \: \mathrm{d}\sigma(z) \right)^2 = \frac{36}{\pi} \left|\langle |\psi|^2, \phi \rangle \right|^2. \nonumber
\end{align}
The result follows from combining \eqref{WatsonIchinoClassical-eq1}, \eqref{WatsonIchinoClassical-eq2}, and \eqref{WatsonIchinoClassical-eq3}.
\end{proof}

\subsection{\texorpdfstring{$L$}{L}-functions}

As mentioned in the previous section, we will work with the $L$-functions $$ L(s,\pi_{\psi} \otimes \pi_{\psi} \otimes \pi_{\phi}) \: \text{ and} \: L(s,\mathrm{ad}(\pi_{\phi})),$$ where $\psi \in H_{\ell_{\psi}}^{\mathcal{O}^{\times}}$, $\phi \in H_{\ell_{\phi}}^{\mathcal{O}^{\times}}$, and $\pi_{\psi}$ and $\pi_{\phi}$ are the Jacquet--Langlands transfers of $\rho_{\psi}$ and $\rho_{\phi}$ to automorphic representations of $\mathrm{GL}_2(\mathbb{A})$, respectively. By Corollary \ref{WatsonIchinoClassical}, we only need to consider the case when $0< \ell_{\phi} \leq 2\ell_{\psi}$ and $\ell_{\phi}$ is even. In this case, we need to compute the Langlands parameters of these $L$-functions. We do this in the following lemma.

\begin{lemma}\label{LfuncsData}
For $\psi \in H_{\ell_{\psi}}^{\mathcal{O}^{\times}}$ and $\phi \in H_{\ell_{\phi}}^{\mathcal{O}^{\times}}$, the Langlands parameters of $\mathrm{ad}(\pi_{\psi})$ are $$\{1,2\ell_{\psi}+1, 2\ell_{\psi}+2\},$$ whereas the Langlands parameters of $\pi_{\psi} \otimes \pi_{\psi} \otimes \pi_{\phi}$ are $$ \left\{\ell_{\phi}+\frac{1}{2}, \ell_{\phi}+\frac{1}{2}, \ell_{\phi}+\frac{3}{2}, \ell_{\phi}+\frac{3}{2}, 2\ell_{\psi} - \ell_{\phi} + \frac{1}{2}, 2\ell_{\psi} - \ell_{\phi} + \frac{3}{2}, 2\ell_{\psi} + \ell_{\phi} + \frac{3}{2}, 2\ell_{\psi} + \ell_{\phi} + \frac{5}{2}\right\}.$$
\end{lemma}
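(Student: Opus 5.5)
The plan is to compute everything at the archimedean place, since the Langlands parameters here are the archimedean ones (the $\mu_j$ in $\Gamma_{\mathbb{C}}(s+\mu_j)$ or $\Gamma_{\mathbb{R}}(s+\mu_j)$ shifts). The first step uses the Jacquet--Langlands correspondence \eqref{LiftSpherHarmToModForms}: $\pi_{\psi}$ corresponds to the holomorphic newform $f_{\psi}$ of weight $k_{\psi}=2\ell_{\psi}+2$. Hence $\pi_{\psi,\infty}$ is the discrete series $D_{k_{\psi}}$. Its Langlands parameter is the Weil group representation $\mathrm{Ind}_{W_{\mathbb{C}}}^{W_{\mathbb{R}}}\left(z\mapsto (z/\bar z)^{(k_\psi-1)/2}\right)$, so that $L_{\infty}(s,\pi_{\psi})=\Gamma_{\mathbb{C}}(s+\frac{k_\psi-1}{2})=\Gamma_{\mathbb{C}}(s+\ell_{\psi}+\frac12)$. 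Likewise $\pi_{\phi,\infty}$ has parameter $\ell_{\phi}+\frac12$.

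For the adjoint, I will use the decomposition $\mathrm{Ind}(\chi)\otimes\mathrm{Ind}(\chi^{-1})=\mathrm{Ind}(\chi^2)\oplus 1\oplus\mathrm{sgn}$, where $\mathrm{ad}$ removes the trivial summand. This gives
\[L_\infty(s,\mathrm{ad}\,\pi_\psi)=\Gamma_{\mathbb{R}}(s+1)\,\Gamma_{\mathbb{C}}(s+k_\psi-1)=\Gamma_{\mathbb{R}}(s+1)\Gamma_{\mathbb{R}}(s+2\ell_\psi+1)\Gamma_{\mathbb{R}}(s+2\ell_\psi+2),\]
using the duplication formula $\Gamma_{\mathbb{C}}(s)=\Gamma_{\mathbb{R}}(s)\Gamma_{\mathbb{R}}(s+1)$. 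This yields the parameters $\{1,2\ell_\psi+1,2\ell_\psi+2\}$ in the $\Gamma_{\mathbb{R}}$ normalization.

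For the triple product, I tensor the three two-dimensional parameters. Write $\chi_a(z)=(z/\bar z)^{a}$ with $a=\ell+\frac12$. Then $\mathrm{Ind}\chi_a\otimes\mathrm{Ind}\chi_b=\mathrm{Ind}\chi_{a+b}\oplus\mathrm{Ind}\chi_{|a-b|}$. Applying this with $a=b=\ell_\psi+\frac12$ gives $\mathrm{Ind}\chi_{2\ell_\psi+1}\oplus\mathrm{Ind}\chi_0$. Since $\mathrm{Ind}\chi_0=1\oplus\mathrm{sgn}$, tensoring it with $\mathrm{Ind}\chi_{\ell_\phi+1/2}$ gives two copies of $\mathrm{Ind}\chi_{\ell_\phi+1/2}$. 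Tensoring $\mathrm{Ind}\chi_{2\ell_\psi+1}$ with $\mathrm{Ind}\chi_{\ell_\phi+1/2}$ gives $\mathrm{Ind}\chi_{2\ell_\psi+\ell_\phi+3/2}\oplus\mathrm{Ind}\chi_{|2\ell_\psi-\ell_\phi+1/2|}$, and the absolute value is harmless because $\ell_\phi\le 2\ell_\psi$. So
\[L_\infty(s,\pi_\psi\otimes\pi_\psi\otimes\pi_\phi)=\Gamma_{\mathbb{C}}(s+\ell_\phi+\tfrac12)^2\,\Gamma_{\mathbb{C}}(s+2\ell_\psi-\ell_\phi+\tfrac12)\,\Gamma_{\mathbb{C}}(s+2\ell_\psi+\ell_\phi+\tfrac32).\]
Expanding each $\Gamma_{\mathbb{C}}(s+\mu)$ as $\Gamma_{\mathbb{R}}(s+\mu)\Gamma_{\mathbb{R}}(s+\mu+1)$ gives exactly the eight stated parameters.

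The main obstacle is bookkeeping of normalizations rather than any deep step. Two points need care. First, I must make sure the parameters are listed in the $\Gamma_{\mathbb{R}}$ convention; this is consistent with the adjoint list containing $1$. Second, I must check the sign character in $\mathrm{Ind}\chi_0=1\oplus\mathrm{sgn}$ when it is tensored with an induced representation, which absorbs it: $\mathrm{sgn}\otimes\mathrm{Ind}\chi=\mathrm{Ind}\chi$. I would verify the Clebsch--Gordan-type identity $\mathrm{Ind}\chi_a\otimes\mathrm{Ind}\chi_b$ directly by restricting to $W_{\mathbb{C}}$ and using Frobenius reciprocity. I would also double-check the weight $2\ell_\psi+2$ against \eqref{LiftSpherHarmToModForms}.
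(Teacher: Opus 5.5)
Your proposal is correct and follows essentially the same route as the paper. Both identify $\pi_{\psi,\infty}$ and $\pi_{\phi,\infty}$ with the induced Weil-group representations attached to weights $2\ell_\psi+2$ and $2\ell_\phi+2$, decompose the tensor products, and read off the archimedean $\Gamma$-factors. The paper first writes $\pi_\psi\otimes\pi_\psi = 1\boxplus\mathrm{ad}(\pi_\psi)$ using self-duality; at $\infty$ this is exactly your $\mathrm{Ind}\,\chi_{2\ell_\psi+1}\oplus 1\oplus\mathrm{sgn}$.

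Your remark that dropping the absolute value requires $\ell_\phi\le 2\ell_\psi$ makes explicit an assumption the paper leaves implicit in the text preceding the lemma.
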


\begin{proof}
The Langlands parameters of $\mathrm{ad}(\pi_{\psi})$ can be found in \cite[\S 5.12]{IwaniecKowalski}, so we focus on the triple product. Since $f_{\psi}$ has real Fourier coefficients, then $\pi_{\psi}$ is self-dual. This means we have the isobaric decomposition $\pi_{\psi} \otimes \pi_{\psi} = 1 \boxplus \mathrm{ad}(\pi_{\psi})$. Consequently, $$ \pi_{\psi} \otimes \pi_{\psi} \otimes \pi_{\phi} = (1 \boxplus \mathrm{ad}(\pi_{\psi})) \otimes \pi_{\phi} = \pi_{\phi} \boxplus (\mathrm{ad}(\pi_{\psi}) \otimes \pi_{\phi}),$$ so that we only need to find the Langlands parameters of $\mathrm{ad}(\pi_{\psi}) \otimes \pi_{\phi}$. Let $W_{\mathbb{R}}$ be the Weil group of $\mathbb{R}$, which can be explicitly described as $$W_{\mathbb{R}} = \mathbb{C}^{\times} \cup j\mathbb{C}^{\times},$$ where $j^2 = -1$ and $j z j^{-1} = \overline{z}$ for all $z\in \mathbb{C}^{\times}$. We adopt the notation in \cite[\S 3]{KnappLLCArchimedean} for representations of the Weil group $W_{\mathbb{R}}$. By \cite[Prop.~5.21]{GelbartAutFormsAdeleGps}, the local representations at $\infty$ of $\pi_{\psi}$ and $\pi_{\phi}$ correspond under the local Langlands correspondence to the representations $(0,2\ell_{\psi}+1)_{\mathbb{R}}^{2}$ and $(0,2\ell_{\phi}+1)_{\mathbb{R}}^{2}$ of $W_{\mathbb{R}}$, respectively. It follows by the properties of tensor products of representations of $W_{\mathbb{R}}$ described in \cite[\S 3.1]{WatsonPhD} that the local representation at $\infty$ of $\mathrm{ad}(\pi_{\psi})$ corresponds under the local Langlands correspondence to $$ (0,2(2\ell_{\psi}+2)-2)_{\mathbb{R}}^{2} \oplus (0,1)_{\mathbb{R}}^{1} = (0,4\ell_{\psi}+2)_{\mathbb{R}}^{2} \oplus (0,1)_{\mathbb{R}}^{1}.$$ Consequently, the local representation at $\infty$ of $\mathrm{ad}(\pi_{\psi}) \otimes \pi_{\phi}$ corresponds under the local Langlands correspondence to
\begin{align*}
    &\left[ (0,4\ell_{\psi}+2)_{\mathbb{R}}^{2} \oplus (0,1)_{\mathbb{R}}^{1} \right] \otimes (0,2\ell_{\phi}+1)_{\mathbb{R}}^{2} \\
    &= \left[ (0,4\ell_{\psi}+2)_{\mathbb{R}}^{2} \otimes (0,2\ell_{\phi}+1)_{\mathbb{R}}^{2} \right] \oplus \left[ (0,1)_{\mathbb{R}}^{1} \otimes (0,2\ell_{\phi}+1)_{\mathbb{R}}^{2} \right] \\
    &= (0,4\ell_{\psi}+2\ell_{\phi}+3)_{\mathbb{R}}^{2} \oplus (0,4\ell_{\psi} - 2\ell_{\phi}+1)_{\mathbb{R}}^{2} \oplus (0,2\ell_{\phi}+1)_{\mathbb{R}}^{2}.
\end{align*}
The result follows from the definition of the Langlands parameters of representations of the Weil group $W_{\mathbb{R}}$ as described in \cite[\S 3.1]{WatsonPhD}.
\end{proof}

Additionally, we will need to understand the behavior of $L(s,\mathrm{ad}(\pi_{\psi}))$ at $s=1$. For this purpose, we have the following.

\begin{theorem}[{\cite[Thm.~0.2]{AdjointBound}}]\label{AdjointBound}
Let $f$ be a holomorphic newform of weight $k$ and level at most $2$. Then $$ L(1,\mathrm{ad}(f)) \gg_{\varepsilon} k^{-\varepsilon}$$ for any $\varepsilon>0$.
\end{theorem}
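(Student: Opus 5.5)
This is a cited result (Hoffstein--Lockhart, with the appendix of Goldfeld--Hoffstein--Lieman). I would reprove it by the classical Siegel--Landau positivity argument, with the Rankin--Selberg $L$-function playing the role that $\zeta(s)L(s,\chi)$ plays for Dirichlet characters.

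\emph{Setup.} Since $f$ is a newform of level $N\le 2$, it is not of CM type, because a CM newform has level divisible by the discriminant of its imaginary quadratic field. Hence, by Gelbart--Jacquet, $\mathrm{ad}(f)$ is a cuspidal automorphic representation of $\mathrm{GL}_3(\mathbb{A})$. In particular $L(s,\mathrm{ad}(f))$ is entire, and by Lemma \ref{LfuncsData} its analytic conductor is $\asymp k^2$. Set
$$F(s):=\zeta(s)L(s,\mathrm{ad}(f)),$$
which up to finitely many bounded Euler factors at $2$ equals $L(s,f\times\bar f)$. Its Dirichlet coefficients $a_n$ are nonnegative, $a_1=1$, and $F$ has a simple pole at $s=1$ with residue $L(1,\mathrm{ad}(f))$. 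Convexity and Phragmén--Lindelöf give polynomial growth of $F$ in $k$ and $|\mathrm{Im}\, s|$ on vertical lines to the left of $1$.

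\emph{Smoothed sum.} Fix $\beta\in(1-\varepsilon,1)$ and $X\ge 1$, and consider
$$S:=\sum_n a_n n^{-\beta}e^{-n/X}=\frac{1}{2\pi i}\int_{(2)}F(s+\beta)\Gamma(s)X^s\,\mathrm{d}s\ \ge 1,$$
where the lower bound uses only the term $n=1$ and positivity. Shift the contour to $\mathrm{Re}\, s=\tfrac12-\beta$, picking up two residues:
\begin{itemize}
\item at $s=1-\beta$, from the pole of $F$: $L(1,\mathrm{ad}(f))\Gamma(1-\beta)X^{1-\beta}$;
\item at $s=0$: $F(\beta)=\zeta(\beta)L(\beta,\mathrm{ad}(f))$.
\end{itemize}
The shifted integral is $\ll k^{A}X^{1/2-\beta}$ for some absolute $A$, by convexity and the exponential decay of $\Gamma$.

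\emph{Sign of $F(\beta)$ and conclusion.} Suppose $L(s,\mathrm{ad}(f))$ has no real zero in $[\beta,1)$. Then $L(\beta,\mathrm{ad}(f))>0$, since $L(\sigma,\mathrm{ad}(f))>0$ for large real $\sigma$ and the function is real and entire. Also $\zeta(\beta)<0$. So $F(\beta)\le 0$ and can be dropped, giving
$$1\le L(1,\mathrm{ad}(f))\,\Gamma(1-\beta)\,X^{1-\beta}+O\bigl(k^AX^{1/2-\beta}\bigr).$$
Choosing $X=k^{C}$ with $C$ large makes the error at most $1/2$, which yields $L(1,\mathrm{ad}(f))\gg (1-\beta)k^{-C(1-\beta)}$. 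Taking $1-\beta\asymp\varepsilon/C$ gives the claim.

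\emph{Main obstacle: a real zero near $1$.} The hardest step is excluding a real zero $\beta_0\in[1-\varepsilon,1)$. The first thing I would try is the Goldfeld--Hoffstein--Lieman input that $L(s,\mathrm{ad}(f))$ has no Siegel zero, i.e.\ no real zero in $[1-c/\log k,1)$. This follows from positivity of the coefficients of the auxiliary Rankin--Selberg function $L(s,\Pi\times\widetilde\Pi)$ with $\Pi=1\boxplus\mathrm{ad}(f)$. This function factors as
$$\zeta(s)\,L(s,\mathrm{ad}(f))^2\,L(s,\mathrm{ad}(f)\times\mathrm{ad}(f)),$$
so a real zero of $L(s,\mathrm{ad}(f))$ would be a zero of multiplicity $\ge 2$ (indeed $\ge 3$, since $\mathrm{ad}(f)$ is self-dual and orthogonal). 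Such a zero close to the simple pole at $1$ contradicts the standard Landau lemma for Dirichlet series with nonnegative coefficients and only a simple pole. With $1-\beta\asymp 1/\log k$, the argument above then gives the even stronger bound $L(1,\mathrm{ad}(f))\gg 1/\log k$, which implies the theorem.

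If that zero-free input were unavailable, I would instead run Siegel's two-form argument, which gives the $k^{-\varepsilon}$ bound ineffectively. That suffices here, because the theorem carries an unspecified implied constant.
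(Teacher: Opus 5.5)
The paper gives no proof of this statement. It imports it from Hoffstein--Lockhart as a black box, and that is all it needs: the bound only serves to absorb the factors $L(1,\mathrm{ad}(\pi_{\psi}))^{-2}L(1,\mathrm{ad}(\pi_{\phi}))^{-1}$ from the Watson--Ichino formula into an $\ell^{\varepsilon}$ loss.

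Your proposal reconstructs the standard argument behind that citation, and it is sound:
\begin{itemize}
\item The hypothesis of level $\le 2$ enters only to exclude CM forms, so $\mathrm{ad}(f)$ is cuspidal and $L(s,\mathrm{ad}(f))$ is entire, and to make the conductor $\asymp k^2$.
\item The Landau-type contour argument for the nonnegative series $\zeta(s)L(s,\mathrm{ad}(f))$ is correct. This includes dropping the residue $\zeta(\beta)L(\beta,\mathrm{ad}(f))\le 0$ at $s=0$.
\item Combined with the Goldfeld--Hoffstein--Lieman zero-free interval $[1-c/\log k,1)$, it gives the stronger effective bound $L(1,\mathrm{ad}(f))\gg 1/\log k$. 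What the reconstruction buys over the bare citation is this transparency about where the hypotheses are used, plus the effective $1/\log k$ strengthening.
\end{itemize}

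One point in your sketch of the GHL input is wrong as stated: the auxiliary function $L(s,\Pi\times\widetilde{\Pi})$ does not have a simple pole. Because $\mathrm{ad}(f)$ is self-dual and cuspidal, $L(s,\mathrm{ad}(f)\times\mathrm{ad}(f))=\zeta(s)L(s,\mathrm{ad}(f))L(s,\mathrm{sym}^4 f)$ itself has a pole at $s=1$. Hence $L(s,\Pi\times\widetilde{\Pi})=\zeta(s)^2L(s,\mathrm{ad}(f))^3L(s,\mathrm{sym}^4 f)$ has a double pole.

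Consequently your ``multiplicity $\ge 2$'' gives no contradiction. What is needed is the form of Landau's lemma that allows a pole of order $m$: such a series has at most $m$ real zeros in $[1-c/\log Q,1)$. Apply it with $m=2$ against the triple zero. The third zero in turn requires $L(s,\mathrm{sym}^4 f)$ to be regular at the putative zero. This is known for non-CM holomorphic $f$, e.g.\ from the automorphy and cuspidality of $\mathrm{sym}^4 f$ due to Kim and Kim--Shahidi. With that correction the argument is complete.
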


\subsection{Selberg--Harish-Chandra transform}

During the course of the proof of the main results, we will need the theory of the Selberg--Harish-Chandra transform of a point-pair invariant defined on the symmetric space $S^2 \cong \mathrm{SO}(3)/\mathrm{SO}(2)$. We give here the important properties we need based on \cite[\S 2.2]{HumphriesRadziwill}; for more on the theory revolving this transform, see \cite[\S 2]{LubPhiSarI} and \cite{Selberg1956}.

\begin{definition}
Let $k : S^2 \times S^2 \rightarrow \mathbb{C}$ be a point-pair invariant. For $\ell\geq 0$, let $f_{\ell}(x,y,z) := P_{\ell}(z)$ be the zonal spherical harmonic about the north pole $p:=(0,0,1)$ of $S^2$, where $P_{\ell}$ is the $\ell$-th Legendre polynomial. We define the \textit{Selberg--Harish-Chandra transform} (or \textit{spherical transform}) of $k$, denoted by $\widehat{k}:\mathbb{Z}_{\geq 0} \rightarrow \mathbb{C}$, by $$ \widehat{k}(\ell) := \int_{S^2} k(p,x) f_{\ell}(x) \: \mathrm{d}\sigma(x).$$
\end{definition}

The essential property of the spherical transform that we will need is the following.

\begin{lemma}[{\cite[Lemma 2.2]{HumphriesRadziwill}}]\label{LemmaIntPointPairFunction}
Let $\phi$ be a spherical harmonic of degree $\ell_{\phi} \geq 0$ and let $k$ be a point-pair invariant of $S^2$. Then $$ \int_{S^2} k(x,y) \phi(y) \: \mathrm{d}\sigma(y) = \widehat{k}(\ell_{\phi}) \phi(x).$$
\end{lemma}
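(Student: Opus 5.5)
The plan is to reduce to the zonal case using rotation invariance of $k$, and then use the addition theorem for spherical harmonics.

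Write $k(x,y)=k_0(x\cdot y)$ for some function $k_0$ on $[-1,1]$. This is possible because $k$ is a point-pair invariant: $k(gx,gy)=k(x,y)$ for every rotation $g$, and the dot product is a complete invariant of pairs of points on $S^2$ under $\mathrm{SO}_3(\mathbb{R})$. I assume $k_0$ is integrable against the relevant measure, for instance continuous.

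The key steps are as follows.

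\emph{Step 1: the operator preserves $H_{\ell}$ and commutes with rotations.} Define $(Kf)(x):=\int_{S^2}k(x,y)f(y)\,\mathrm{d}\sigma(y)$. Since $k$ is invariant and $\sigma$ is rotation invariant, $K$ commutes with the action of $\mathrm{SO}_3(\mathbb{R})$. The $H_\ell$ are the isotypic components of $L^2(S^2)$, each irreducible and pairwise non-isomorphic. Hence $K$ maps $H_\ell$ into $H_\ell$. By Schur's lemma, $K$ acts on $H_\ell$ by a scalar $c_\ell$, so $K\phi=c_{\ell_\phi}\phi$.

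\emph{Step 2: identify $c_\ell$ with $\widehat{k}(\ell)$.} Apply the identity to $\phi=f_\ell$ and evaluate at $x=p$. Using $f_\ell(p)=P_\ell(1)=1$, this gives
$$c_\ell=c_\ell f_\ell(p)=\int_{S^2}k(p,y)f_\ell(y)\,\mathrm{d}\sigma(y)=\widehat{k}(\ell).$$

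If one prefers to avoid Schur's lemma, there is a direct alternative. Expand $k_0(t)=\sum_m a_m(2m+1)P_m(t)/(4\pi)$ in Legendre polynomials. Then use the addition theorem, which says $\frac{2m+1}{4\pi}P_m(x\cdot y)$ is the reproducing kernel of $H_m$ (the same fact invoked in Lemma~\ref{LocalConstInfty}). This gives $K\phi=a_{\ell_\phi}\phi$ by orthogonality. Testing against $f_\ell$ at $p$ identifies $a_\ell$ with $\widehat{k}(\ell)$.

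\emph{Step 3: check convergence.} I must confirm that interchanging sums and integrals is justified. On the Schur route this is unnecessary as long as $K$ is a bounded operator on $L^2(S^2)$. That holds when $k_0$ is integrable with respect to $\mathrm{d}t$ on $[-1,1]$: by rotation invariance, $\int_{S^2}|k(x,y)|\,\mathrm{d}\sigma(y)=2\pi\int_{-1}^1|k_0(t)|\,\mathrm{d}t$ is finite and independent of $x$, so Schur's test applies.

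I expect the only real subtlety to be Step 1. There I must state cleanly that $K$ intertwines the $\mathrm{SO}_3(\mathbb{R})$-action, and note that $H_\ell$ occurs with multiplicity one, so Schur's lemma yields a scalar. The remaining steps are routine.
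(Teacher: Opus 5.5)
Your argument is correct. The paper gives no proof of its own here: it imports the lemma from \cite[Lemma~2.2]{HumphriesRadziwill}. The statement is the Funk--Hecke formula on $S^2$, and its standard proof (Selberg's radialization argument) is organized differently from yours.

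\emph{The radialization argument.} Fix $x$, let $G_x\cong\mathrm{SO}_2(\mathbb{R})$ be its stabilizer with Haar probability measure $\mathrm{d}h$, and set $\phi^{x}(y):=\int_{G_x}\phi(hy)\,\mathrm{d}h$. Since $k(x,h^{-1}y)=k(hx,y)=k(x,y)$, one gets $\int_{S^2}k(x,y)\phi(y)\,\mathrm{d}\sigma(y)=\int_{S^2}k(x,y)\phi^{x}(y)\,\mathrm{d}\sigma(y)$. Moreover $\phi^{x}$ is a $G_x$-invariant element of $H_{\ell_\phi}$, hence a multiple of $P_{\ell_\phi}(x\cdot y)$, and the multiple is $\phi^{x}(x)=\phi(x)$. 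Rotating $x$ to $p$ turns the remaining integral into $\widehat{k}(\ell_\phi)$.

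\emph{Comparison.} Both proofs rest on the same multiplicity-one fact: one-dimensionality of the $G_x$-fixed vectors in $H_\ell$ there, versus irreducibility and pairwise non-isomorphism of the $H_\ell$ in yours. Both identify the scalar via $P_\ell(1)=1$. The radialization route is pointwise and local in $x$. It needs only $k(x,\cdot)\in L^1(S^2)$, with no $L^2$-boundedness and no passage from an $L^2$ identity to a pointwise one. Your Schur's-lemma route is more conceptual and carries over verbatim to any compact Gelfand pair.

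\emph{Two small points to record.} Your route yields $K\phi=c_{\ell}\phi$ only in $L^2(S^2)$. Since Step~2 evaluates at the single point $p$ and the lemma is pointwise, you should note that $K\phi$ is continuous. This follows from $|K\phi(gx)-K\phi(x)|\le 2\pi\|k_0\|_{L^1}\sup_{y}|\phi(gy)-\phi(y)|$, which comes from the invariance of $k$. Likewise, your Legendre-expansion alternative needs something like $k_0\in L^2([-1,1])$ to justify exchanging the series with the integral. This holds for the kernels $k_R$ used later.
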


We will need this result for a particular class of point-pair invariants.

\begin{definition}
For $0 < R \leq \pi$, define the point-pair invariant $$ k_{R}(z,w) := \frac{1}{\mathrm{vol}(B_{R}(w))} 1_{B_{R}(w)}(z) = \begin{dcases*}
    \frac{1}{4\pi \sin^2(R/2)} & if $d(z,w)\leq R$, \\
    0 & otherwise.
\end{dcases*} $$ Let $h_R := \widehat{k_R}$ denote its Selberg--Harish-Chandra transform. We define $$ K_{R}(z,w) := \frac{1}{2}\sum_{\gamma \in \mathcal{O}^{\times}} k_{R}(\gamma z, w).$$
\end{definition}

\begin{lemma}\label{IntBallEqualInnerProd}
If $\phi : \mathcal{O}^{\times} \backslash S^2 \rightarrow \mathbb{C}$ is a spherical harmonic of Laplacian eigenvalue $\lambda_{\psi}^2 = \ell_{\psi}(\ell_{\psi}+1)$, then $$ \frac{1}{\mathrm{vol}(B_R)} \int_{B_R(w)} \phi(z) \: \mathrm{d}\sigma(z) = \langle \phi, K_R(\cdot, w) \rangle = h_R(\ell_{\phi}) \phi(w).$$
\end{lemma}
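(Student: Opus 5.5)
The plan is to unfold the $\mathcal{O}^{\times}$-average defining $K_R$ and then apply Lemma \ref{LemmaIntPointPairFunction} on all of $S^2$. We regard $\phi$ as an $\mathcal{O}^{\times}$-invariant function on $S^2$.

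The key bookkeeping point is that $\mathcal{O}^{\times}$ has order $24$, but it acts on $B^{0,1}(\mathbb{R})\cong S^2$ by conjugation through its image in $\mathrm{SO}_3(\mathbb{R})$. Since $\pm1$ act trivially, that image has order $12$. Hence each point of $\mathcal{O}^{\times}\backslash S^2$ has (generically) $12$ preimages, each hit by $2$ units, and $\sigma(\mathcal{O}^{\times}\backslash S^2)=4\pi/12=\pi/3$, matching the paper. This is exactly why $K_R$ carries the factor $\frac12$: the quantity $\frac12\sum_{\gamma\in\mathcal{O}^{\times}}$ is a sum over the $12$ distinct rotations.

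First I would compute $\langle \phi, K_R(\cdot,w)\rangle$, taking everything real (or noting that $k_R$ is real-valued). By definition this is
\[
\int_{\mathcal{O}^{\times}\backslash S^2}\phi(z)\,\frac12\sum_{\gamma\in\mathcal{O}^{\times}}k_R(\gamma z,w)\,\mathrm{d}\sigma(z).
\]
Choose a fundamental domain $F$ for the order-$12$ rotation group $G$. The $12$ translates $gF$ tile $S^2$ up to measure zero. Using $\phi(\gamma z)=\phi(z)$ and the rotation invariance of $\sigma$, the integral unfolds to
\[
\sum_{g\in G}\int_F \phi(gz)\,k_R(gz,w)\,\mathrm{d}\sigma(z)=\int_{S^2}\phi(z)\,k_R(z,w)\,\mathrm{d}\sigma(z)=\frac{1}{\mathrm{vol}(B_R)}\int_{B_R(w)}\phi(z)\,\mathrm{d}\sigma(z).
\]
The last equality is simply the definition of $k_R$. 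This gives the first claimed equality.

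Second, for the equality with $h_R(\ell_\phi)\phi(w)$, I would apply Lemma \ref{LemmaIntPointPairFunction} to the spherical harmonic $\phi$, of degree $\ell_\phi$ on $S^2$. This requires $k_R$ to be a point-pair invariant, i.e.\ $k_R(gz,gw)=k_R(z,w)$ for rotations $g$, which holds because it depends only on $d(z,w)$. It is also symmetric, so $\int_{S^2}k_R(z,w)\phi(z)\,\mathrm{d}\sigma(z)=\int_{S^2}k_R(w,z)\phi(z)\,\mathrm{d}\sigma(z)=\widehat{k_R}(\ell_\phi)\phi(w)=h_R(\ell_\phi)\phi(w)$.

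The only real obstacle is the factor bookkeeping in the unfolding step. One has to confirm that $\frac12\sum_{\gamma\in\mathcal{O}^{\times}}$ matches the $12$ sheets of $S^2\to\mathcal{O}^{\times}\backslash S^2$, which in turn requires that $\langle\cdot,\cdot\rangle$ integrates over a fundamental domain with the measure $\sigma$, not a renormalized one. The measure-zero boundary of $F$ and the finitely many points with nontrivial stabilizer cause no trouble, since both the integrand and $\phi$ are bounded.
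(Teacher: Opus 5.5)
Your proposal is correct and follows the paper's proof: you unfold $\frac12\sum_{\gamma\in\mathcal{O}^{\times}}$ using the $\mathcal{O}^{\times}$-invariance of $\phi$ to reach $\int_{S^2}\phi(z)k_R(z,w)\,\mathrm{d}\sigma(z)$, then apply Lemma \ref{LemmaIntPointPairFunction} and the definition of $k_R$. You also make explicit two points the paper leaves implicit: that the factor $\frac12$ accounts for $\pm1$ giving the same rotation (so the sum runs over $12$ distinct rotations), and that $k_R(z,w)=k_R(w,z)$, which is needed because Lemma \ref{LemmaIntPointPairFunction} integrates in the second variable.
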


\noindent \textit{Proof.} On one hand, since $\phi$ is invariant under $\mathcal{O}^{\times}$ and $k_R$ takes only real values, by Lemma \ref{LemmaIntPointPairFunction} we have 
\begin{align*}
    \langle \phi, K_R(\cdot, w)\rangle &= \int_{\mathcal{O}^{\times} \backslash S^2} \phi(z) \overline{K_R(z,w)} \: \mathrm{d}\sigma(z) = \frac{1}{2} \sum_{\gamma\in \mathcal{O}^{\times}} \int_{\mathcal{O}^{\times} \backslash S^2} \phi(z) k_R(\gamma z,w) \: \mathrm{d}\sigma(z) \\
    &= \frac{1}{2} \sum_{\gamma\in \mathcal{O}^{\times}} \int_{\mathcal{O}^{\times} \backslash S^2} \phi(\gamma z) k_R(\gamma z,w) \: \mathrm{d}\sigma(z) \\
    &= \int_{S^2} \phi(z) k_R(z,w) \: \mathrm{d}\sigma(z) = h_R(\ell_{\phi}) \phi(w).
\end{align*}
On the other hand, by definition of $k_R$,
\[
\pushQED{\qed} 
\langle \phi, K_R(\cdot, w)\rangle = \int_{S^2} \phi(z) k_R(z,w) \: \mathrm{d}\sigma(z) = \frac{1}{\mathrm{vol}(B_R)} \int_{B_R(w)} \phi(z) \: \mathrm{d}\sigma(z). \qedhere
\popQED 
\]

\begin{lemma}\label{ConvPointPairInvariants}
For $0\leq \rho<R\leq \pi$, the convolution $k_{R}*k_{\rho}$ is nonnegative, bounded by $1/\mathrm{vol}(B_{R})$, and satisfies $$ k_{R} * k_{\rho}(z,w) = \begin{dcases*}
    \frac{1}{\mathrm{vol}(B_{R})} & if $d(z,w) \leq R-\rho$, \\
    0 & if $d(z,w) > R+\rho$.
\end{dcases*} $$ Consequently, 
\begin{equation}
    \frac{\mathrm{vol}(B_{R-\rho})}{\mathrm{vol}(B_R)} k_{R-\rho} * k_{\rho} (z,w) \leq k_{R}(z,w) \leq \frac{\mathrm{vol}(B_{R+\rho})}{\mathrm{vol}(B_{R})} k_{R+\rho} * k_{\rho}(z,w) \label{ConvPointPairInvariants-eq1}
\end{equation}
and 
\begin{equation}
    \frac{\mathrm{vol}(B_{R-\rho})}{12\mathrm{vol}(B_R)} K_{R-\rho} * K_{\rho} (z,w) \leq K_{R}(z,w) \leq \frac{\mathrm{vol}(B_{R+\rho})}{12\mathrm{vol}(B_{R})} K_{R+\rho} * K_{\rho}(z,w). \label{ConvPointPairInvariants-eq2}
\end{equation}
\end{lemma}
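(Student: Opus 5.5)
The plan is to prove everything directly from the definitions. I take the convolution of point-pair invariants on $S^2$ to be $k_R * k_\rho(z,w) := \int_{S^2} k_R(z,u)\, k_\rho(u,w)\,\mathrm{d}\sigma(u)$. On the quotient, $K_R * K_\rho$ is the analogous integral over $\mathcal{O}^{\times}\backslash S^2$.

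\emph{Basic properties of $k_R * k_\rho$.} Nonnegativity is clear because both kernels are nonnegative. For the upper bound, bound $k_R(z,u)\le 1/\mathrm{vol}(B_R)$ pointwise and use $\int_{S^2} k_\rho(u,w)\,\mathrm{d}\sigma(u)=1$; this gives $k_R*k_\rho\le 1/\mathrm{vol}(B_R)$. Now suppose $d(z,w)\le R-\rho$. Every $u$ with $d(u,w)\le\rho$ satisfies $d(z,u)\le R$ by the triangle inequality, so $k_R(z,u)=1/\mathrm{vol}(B_R)$ on the whole support of $k_\rho(\cdot,w)$, and the integral equals $1/\mathrm{vol}(B_R)$ exactly. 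If instead $d(z,w)>R+\rho$, the balls $B_R(z)$ and $B_\rho(w)$ are disjoint, so the integrand vanishes identically.

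\emph{Inequality \eqref{ConvPointPairInvariants-eq1}.} Apply the previous step with radii $(R-\rho,\rho)$ for the left inequality and $(R+\rho,\rho)$ for the right one. I assume $R+\rho\le\pi$, or else interpret $B_{R+\rho}$ as $S^2$, which is harmless.
\begin{itemize}
\item Left inequality: $k_{R-\rho}*k_\rho$ is at most $1/\mathrm{vol}(B_{R-\rho})$ and vanishes when $d(z,w)>R$. Hence $\frac{\mathrm{vol}(B_{R-\rho})}{\mathrm{vol}(B_R)}\,k_{R-\rho}*k_\rho$ is at most $1/\mathrm{vol}(B_R)$ and is supported in $\{d(z,w)\le R\}$, so it is bounded by $k_R$.
\item Right inequality: when $d(z,w)\le R=(R+\rho)-\rho$, we have $k_{R+\rho}*k_\rho=1/\mathrm{vol}(B_{R+\rho})$. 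So the right-hand side equals $1/\mathrm{vol}(B_R)=k_R(z,w)$ there. Elsewhere $k_R=0$ while the right-hand side is nonnegative.
\end{itemize}

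\emph{Inequality \eqref{ConvPointPairInvariants-eq2}: the main obstacle.} The difficulty here is purely bookkeeping: the unfolding constants. Expanding the definitions gives
\[K_R*K_\rho(z,w)=\tfrac14\sum_{\gamma,\gamma'\in\mathcal{O}^{\times}}\int_{\mathcal{O}^{\times}\backslash S^2}k_R(\gamma z,u)\,k_\rho(\gamma' u,w)\,\mathrm{d}\sigma(u).\]
Rotation invariance gives $k_R(\gamma z,u)=k_R(\gamma'\gamma z,\gamma' u)$. I then reindex with $\gamma''=\gamma'\gamma$ and unfold the sum over $\gamma'$ against the integral over the quotient. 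For this I use that $\mathcal{O}^{\times}$ acts through its image of order $12$ in $\mathrm{SO}_3(\mathbb{R})$, since $\pm1$ act trivially, and that generic fibres of $S^2\to\mathcal{O}^{\times}\backslash S^2$ have $12$ points. The result expresses $K_R*K_\rho(z,w)$ as a fixed constant $c$ times $\tfrac12\sum_{\gamma''}(k_R*k_\rho)(\gamma''z,w)$.

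Summing \eqref{ConvPointPairInvariants-eq1} over $\gamma\in\mathcal{O}^{\times}$ (with $z$ replaced by $\gamma z$) and multiplying by $\tfrac12$ then yields \eqref{ConvPointPairInvariants-eq2}. The numerical factor in front, stated as $\tfrac1{12}$, is $1/c$. Pinning it down requires tracking carefully how $\sigma$ on $\mathcal{O}^{\times}\backslash S^2$ is normalized, i.e.\ $\sigma(\mathcal{O}^{\times}\backslash S^2)=\pi/3=4\pi/12$, and the factor $12$ versus $24$ coming from the kernel of the action. Checking that this constant matches the statement is the step I would verify most carefully. The structural inequality, however, is an immediate consequence of \eqref{ConvPointPairInvariants-eq1} and positivity of all terms.
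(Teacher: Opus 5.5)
Your arguments for the pointwise properties of $k_R*k_\rho$ and for \eqref{ConvPointPairInvariants-eq1} are the same as the paper's. Your plan for \eqref{ConvPointPairInvariants-eq2} also matches the paper: rewrite $K_R*K_\rho$ as a multiple of $\tfrac12\sum_\gamma k_R*k_\rho(\gamma z,w)$, then sum \eqref{ConvPointPairInvariants-eq1} over $\gamma$. The gap is the constant $c$. You leave it undetermined, and with the convention you declare it is not $12$.

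Suppose $K_R*K_\rho(z,w)$ means $\int_{\mathcal{O}^{\times}\backslash S^2}K_R(z,u)K_\rho(u,w)\,\mathrm{d}\sigma(u)$ with $\sigma(\mathcal{O}^{\times}\backslash S^2)=\pi/3$. Then carrying out your own unfolding gives $\tfrac14\cdot 2\cdot\sum_{\gamma''}\int_{S^2}k_R(\gamma''z,u)k_\rho(u,w)\,\mathrm{d}\sigma(u)$. The factor $2$ arises because $\pm1$ act trivially, and the $12$ rotations unfold the fundamental domain into a single copy of $S^2$. So $K_R*K_\rho(z,w)=\tfrac12\sum_\gamma k_R*k_\rho(\gamma z,w)$, that is, $c=1$. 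Summing \eqref{ConvPointPairInvariants-eq1} then gives \eqref{ConvPointPairInvariants-eq2} with $1$ in place of $\tfrac1{12}$.

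Under your convention the stated lower bound is merely weaker, but the stated upper bound is false. Take $w$ at distance more than $R+2\rho$ from all its nontrivial $\mathcal{O}^{\times}$-images, and set $z=w$. Only $\gamma=\pm1$ contribute, so $K_R(w,w)=1/\mathrm{vol}(B_R)$, while the right-hand side equals $1/(12\,\mathrm{vol}(B_R))$. So the step you flagged for careful checking, once checked, contradicts your assertion $1/c=\tfrac1{12}$.

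The paper's convention, visible in \eqref{CompKtok} and consistent with the factor $12$ in Lemma \ref{SpecExpPointPairInvariants}, integrates the periodized kernels over all of $S^2$: $K_R*K_\rho(z,w)=\int_{S^2}K_R(z,t)K_\rho(t,w)\,\mathrm{d}\sigma(t)$. No fundamental-domain unfolding is then needed. The change of variables $s=\gamma_2 t$ gives $\int_{S^2}k_R(\gamma_1 z,t)\,k_\rho(\gamma_2 t,w)\,\mathrm{d}\sigma(t)=k_R*k_\rho(\gamma_2\gamma_1 z,w)$. Each $\gamma\in\mathcal{O}^{\times}$ has exactly $24$ factorizations $\gamma=\gamma_2\gamma_1$, so $K_R*K_\rho(z,w)=6\sum_{\gamma}k_R*k_\rho(\gamma z,w)=12\cdot\tfrac12\sum_\gamma k_R*k_\rho(\gamma z,w)$. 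Hence $c=12$, and summing \eqref{ConvPointPairInvariants-eq1} over $\gamma$ and multiplying by $\tfrac12$ yields \eqref{ConvPointPairInvariants-eq2} with exactly the factor $\tfrac1{12}$.

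This constant is the only content of \eqref{ConvPointPairInvariants-eq2} beyond \eqref{ConvPointPairInvariants-eq1}. To prove the statement as written, you need to adopt the full-sphere convolution and carry out this count.
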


\begin{proof}
The convolution $k_{R}*k_{\rho}$ is nonnegative since both $k_{R}$ and $k_{\rho}$ are. Since $k_{\rho}(t,w)\neq 0$ only when $t \in B_{\rho}(w)$, we have $$ k_{R}*k_{\rho} (z,w) = \int_{S^2} k_{R} (z,t) k_{\rho}(t,w) \: \mathrm{d}\sigma(t) \leq \int_{B_{\rho}(w)} \frac{1}{\mathrm{vol}(B_{R})} \cdot \frac{1}{\mathrm{vol}(B_{\rho})} \: \mathrm{d}\sigma(t) = \frac{1}{\mathrm{vol}(B_{R})}. $$ We now prove the first identity in the result. If $d(z,w)> R+\rho$, then the inequalities $d(z,t)\leq R$ and $d(t,w)\leq \rho$ cannot be true simultaneously, and so $k_{R}(z,t) k_{\rho}(t,w)=0$ for all $t\in S^2$. Now, assume that $d(z,w) \leq R-\rho$. If $d(t,w)>\rho$, then $k_{R}(z,t)k_{\rho}(t,w)=0$, so assume otherwise. It follows that $$ d(z,t) \leq d(z,w) + d(w,t) \leq (R-\rho) + \rho = R, $$ so that $k_{R}(z,t)k_{\rho}(t,w) = \frac{1}{\mathrm{vol}(B_{R})} \cdot \frac{1}{\mathrm{vol}(B_{\rho})}$ for all $t\in B_{\rho}(w)$. From here the first identity follows.

The inequalities \eqref{ConvPointPairInvariants-eq1} follow from the previous observations. As for the inequalities \eqref{ConvPointPairInvariants-eq2}, they follow from \eqref{ConvPointPairInvariants-eq1} and from the computation 
\begin{align}
    K_{R}*K_{\rho}(z,w) &= \frac{1}{4} \sum_{\gamma_1 \in \mathcal{O}^{\times}} \sum_{\gamma_2 \in \mathcal{O}^{\times}} \int_{S^2} k_{R}(\gamma_1 z,t) k_{\rho}(\gamma_2 t,w) \: \mathrm{d}\sigma(t) \label{CompKtok} \\
    &= \frac{1}{4} \sum_{\gamma_1 \in \mathcal{O}^{\times}} \sum_{\gamma_2 \in \mathcal{O}^{\times}} k_{R}*k_{\rho}(\gamma_2 \gamma_1 z, w) \nonumber \\
    &= 6 \sum_{\gamma \in \mathcal{O}^{\times}} k_{R}*k_{\rho}(\gamma z,w), \nonumber
\end{align}
where in the last line we used that, for a fixed $\gamma \in \mathcal{O}^{\times}$, the equation $\gamma = \gamma_2 \gamma_1$ has $|\mathcal{O}^{\times}| = 24$ pairs of solutions $(\gamma_1, \gamma_2)$ in $\mathcal{O}^{\times}$.
\end{proof}

Additionally, we have the following spectral expansion of the convolutions $K_{R}*K_{\rho}$.

\begin{lemma}\label{SpecExpPointPairInvariants}
We have the spectral expansion $$ K_{R}*K_{\rho}(z,w) = \frac{12}{\mathrm{vol}(\mathcal{O}^{\times} \backslash S^2)} + 12\sum_{\phi \in \mathcal{B}} h_{R}(\ell_{\phi}) h_{\rho}(\ell_{\phi}) \phi(z) \overline{\phi(w)}, $$ which converges absolutely and uniformly.
\end{lemma}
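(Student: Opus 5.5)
The plan is to expand $K_R*K_\rho(\cdot,w)$ for fixed $w$ in the orthonormal basis of Lemma \ref{SpectralRes}, identify its coefficients using Lemma \ref{IntBallEqualInnerProd}, and then control convergence with the local Weyl law.

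\textbf{Step 1: the expansion of a single kernel.} First I would expand $K_\rho(\cdot,w)$ itself. For fixed $w$, $K_\rho(\cdot,w)$ is bounded, so it lies in $L^2(\mathcal{O}^{\times}\backslash S^2)$. By Lemma \ref{IntBallEqualInnerProd}, its coefficient against $\phi\in\mathcal{B}$ is
\[
\langle K_\rho(\cdot,w),\phi\rangle=\overline{\langle \phi, K_\rho(\cdot,w)\rangle}=h_\rho(\ell_\phi)\overline{\phi(w)},
\]
using that $h_\rho$ is real because $k_\rho$ is real and zonal. The constant term is $\langle K_\rho(\cdot,w),f_0\rangle f_0=h_\rho(0)/\mathrm{vol}(\mathcal{O}^{\times}\backslash S^2)$ with $h_\rho(0)=1$. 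The normalization needs care here: $K_\rho$ carries the factor $\tfrac12$, and the convolution of two $K$'s picks up a factor $12$ from the computation \eqref{CompKtok}. I would settle these constants by redoing \eqref{CompKtok}-type unfolding, writing the convolution over $\mathcal{O}^{\times}\backslash S^2$ as $\int K_R(z,t)K_\rho(t,w)\,\mathrm{d}\sigma(t)$, up to whatever normalization the paper intends. The target is the factor $12$ in front of both the constant and the spectral sum.

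\textbf{Step 2: the expansion of the convolution.} For fixed $z,w$, write $K_R*K_\rho(z,w)=c\,\langle K_R(z,\cdot),\overline{K_\rho(\cdot,w)}\rangle$ and apply Parseval (Lemma \ref{SpectralRes}). Each coefficient is the product of the two coefficients computed in Step 1, which gives
\[
\frac{c}{\mathrm{vol}(\mathcal{O}^{\times}\backslash S^2)}+c\sum_{\phi\in\mathcal{B}} h_R(\ell_\phi)h_\rho(\ell_\phi)\phi(z)\overline{\phi(w)} .
\]
This identity holds pointwise, not merely in $L^2$, because Parseval is applied to two genuine $L^2$ functions for each fixed pair $(z,w)$.

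\textbf{Step 3: absolute and uniform convergence, the main obstacle.} I would first try Cauchy--Schwarz in $\phi$:
\[
\sum_\phi |h_R h_\rho||\phi(z)\phi(w)|\le \Big(\sum_\phi |h_R(\ell_\phi)|^2|\phi(z)|^2\Big)^{1/2}\Big(\sum_\phi |h_\rho(\ell_\phi)|^2|\phi(w)|^2\Big)^{1/2}.
\]
By Parseval, each factor equals $\|K_R(\cdot,z)\|^2$ minus the constant term, which is finite and independent of $z$. This only shows the sum is bounded, though, not that it converges uniformly. For uniformity I would bound the tails via dyadic blocks $T\le\ell_\phi\le 2T$, using the local Weyl law (Proposition \ref{LocalWeylLaw}) together with a decay estimate for $h_R(\ell)$. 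The decay would come from the Mehler--Heine/Hilb asymptotics of the integral of Legendre polynomials over a cap: $h_R(\ell)\propto (P_{\ell-1}(\cos R)-P_{\ell+1}(\cos R))/((2\ell+1)\sin^2(R/2))$, which gives $h_R(\ell)\ll_R \ell^{-3/2}$. Then the product $h_Rh_\rho\ll \ell^{-3}$ times a local Weyl block of size $T^2$ gives a tail $\sum_T T^{-1}$, which converges. The remaining work is verifying this decay bound, since $\rho>0$ is fixed; if $\rho=0$ is allowed, that case needs to be treated separately or excluded.
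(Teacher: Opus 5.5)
Your proof is correct, but it is organized differently from the paper's. The paper fixes $w$ and unfolds via \eqref{CompKtok} to $K_R*K_\rho(\cdot,w)=6\sum_{\gamma\in\mathcal{O}^\times}k_R*k_\rho(\gamma\,\cdot,w)$. It then computes each coefficient $\langle\phi,K_R*K_\rho(\cdot,w)\rangle=12h_R(\ell_\phi)h_\rho(\ell_\phi)\phi(w)$ by applying Lemma \ref{LemmaIntPointPairFunction} twice on the full sphere, and infers absolute and uniform convergence from the continuity of $K_R*K_\rho$ alone.

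You instead write the convolution as an inner product of two single kernels and apply Parseval. This gives the identity at every point $(z,w)$, and absolute convergence follows from Bessel and Cauchy--Schwarz. You then prove uniformity by an explicit tail estimate: on each block $T\le\ell_\phi\le 2T$, Proposition \ref{LocalWeylLaw} and $h_R(\ell)h_\rho(\ell)\ll_{R,\rho}\ell^{-3}$ give a contribution $\ll T^{-1}$, uniformly in $z,w$. The decay you list as remaining work is already Lemma \ref{boundsSHCTransform} when $R\le\pi-\varepsilon$. Your Legendre formula covers $R$ near $\pi$, and $h_\pi(\ell)=0$ for $\ell\ge 1$.

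What your route buys is an actual proof of the uniform convergence, which the paper's one-line appeal to continuity does not supply by itself. Your Cauchy--Schwarz bound in Step 3 would also suffice via Dini's theorem. By your own Parseval identity, $\sum_{\phi}h_R(\ell_\phi)^2|\phi(z)|^2=\tfrac{1}{12}K_R*K_R(z,z)-\mathrm{vol}(\mathcal{O}^\times\backslash S^2)^{-1}$ is continuous in $z$. So the monotone partial sums converge uniformly, and Cauchy--Schwarz on the tails finishes the argument. This is presumably what the paper's remark intends.

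Three bookkeeping points need fixing.

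\textbf{The constant.} You get $c=12$ precisely because the paper's convolution is taken over all of $S^2$ (first line of \eqref{CompKtok}). The integrand $t\mapsto K_R(z,t)K_\rho(t,w)$ is $\mathcal{O}^\times$-invariant, and $\mathcal{O}^\times$ acts through $\mathcal{O}^\times/\{\pm1\}$, of order $12$. Hence $K_R*K_\rho(z,w)=12\langle K_\rho(\cdot,w),K_R(\cdot,z)\rangle$. Integrating over $\mathcal{O}^\times\backslash S^2$, as your wording suggests, would give $c=1$.

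\textbf{Symmetry of the kernel.} Step 1 computes the coefficients of $K_R(\cdot,z)$, while Step 2 uses $K_R(z,\cdot)$. You therefore need $K_R(z,t)=K_R(t,z)$, which follows from $k_R(\gamma z,t)=k_R(\gamma^{-1}t,z)$ after reindexing $\gamma\mapsto\gamma^{-1}$. With the pairing ordered as above, Parseval produces $h_R(\ell_\phi)h_\rho(\ell_\phi)\phi(z)\overline{\phi(w)}$ exactly.

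\textbf{Dependence on $z$.} $\|K_R(\cdot,z)\|^2$ is not independent of $z$: points with nontrivial stabilizer in $\mathcal{O}^\times/\{\pm1\}$ pick up extra terms. It is only uniformly bounded, but that is all you use.
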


\begin{proof}
Since $\mathcal{B}$ is an orthonormal basis of $L^2(\mathcal{O}^{\times} \backslash S^2)$ and $K_{R}*K_{\rho}$ is defined on $\mathcal{O}^{\times} \backslash S^2$, we just need to compute the coefficient corresponding to $f_0 = 1/\sqrt{\mathrm{vol}(\mathcal{O}^{\times} \backslash S^2)}$ and to $\phi \in \mathcal{B}$. Since $\phi \in \mathcal{B}$ is $\mathcal{O}^{\times}$-invariant, by \eqref{CompKtok} and Lemma \ref{LemmaIntPointPairFunction} we have that 
\begin{align*}
    \langle \phi, K_{R}*K_{\rho}(\cdot, w)\rangle &= 6 \int_{\mathcal{O}^{\times} \backslash S^2} \sum_{\gamma \in \mathcal{O}^{\times}} k_{R}*k_{\rho}(\gamma z,w) \phi(\gamma z) \: \mathrm{d}\sigma(z) = 12 \int_{S^2} k_{R}*k_{\rho}(z,w) \phi(z) \: \mathrm{d}\sigma(z) \\
    &= 12 \int_{S^2} \int_{S^2} k_{R}(z,t) k_{\rho}(t,w) \phi(z) \: \mathrm{d}\sigma(t) \: \mathrm{d}\sigma(z) \\
    &= 12 h_{R}(\ell_{\phi}) \int_{S^2} k_{\rho}(t,w) \phi(t) \: \mathrm{d}\sigma(t) = 12 h_{R}(\ell_{\phi}) h_{\rho}(\ell_{\phi}) \phi(w).
\end{align*}
Similarly, one can show that $\langle f_0, K_{R}*K_{\rho}(\cdot, w)\rangle = 12$, and so we obtain the desired spectral expansion. The spectral expansion converges absolutely and uniformly since the convolution $K_{R}*K_{\rho}$ is continuous in both variables $z$ and $w$.
\end{proof}

We now give asymptotics for $h_{R}$, which we will need later.

\begin{lemma}[{\cite[Lemmas 2.12, 2.13]{HumphriesRadziwill}}]\label{boundsSHCTransform}
Suppose that $R\leq \pi-\varepsilon$ for some fixed $\varepsilon>0$. For $m\in \mathbb{N}$, we have $$ h_{R}(m) \ll \begin{dcases*}
    1 & for $m\leq \frac{1}{R}$, \\
    \frac{1}{R^{3/2} m^{3/2}} & for $m\geq \frac{1}{R}$.
\end{dcases*} $$
\end{lemma}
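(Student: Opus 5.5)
We need to prove bounds on $h_R(m)$, the spherical transform of the normalized indicator of a cap. The plan is to compute $h_R$ explicitly and then apply asymptotics of Legendre polynomials.

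\textbf{Reduction to a one-variable integral.} By definition and rotation invariance, using polar coordinates about the north pole $p$ (so that $x\cdot p=\cos\theta$ and $\mathrm{d}\sigma = \sin\theta\,\mathrm{d}\theta\,\mathrm{d}\varphi$), we get
$$ h_R(m) = \frac{2\pi}{4\pi\sin^2(R/2)}\int_0^R P_m(\cos\theta)\sin\theta\,\mathrm{d}\theta = \frac{1}{2\sin^2(R/2)}\int_{\cos R}^1 P_m(t)\,\mathrm{d}t. $$

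\textbf{Small $m$.} For $m\le 1/R$ we use the bound $|P_m(t)|\le 1$ on $[-1,1]$. Since the integration range has length $1-\cos R=2\sin^2(R/2)$, this gives $|h_R(m)|\le 1$ immediately.

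\textbf{Large $m$.} Here we need cancellation. We use the classical identity $(2m+1)P_m = P_{m+1}'-P_{m-1}'$, which integrates to
$$ \int_{\cos R}^1 P_m(t)\,\mathrm{d}t = \frac{P_{m-1}(\cos R)-P_{m+1}(\cos R)}{2m+1}, $$
using $P_{m\pm1}(1)=1$. We then need a bound of the form $|P_n(\cos\theta)|\ll (n\sin\theta)^{-1/2}$, valid for all $\theta\in(0,\pi)$ and uniform in $n\ge1$. This is Bernstein's inequality, $|P_n(\cos\theta)|\le \sqrt{2/(\pi n\sin\theta)}$. Since $R\le\pi-\varepsilon$, we have $\sin R\gg_\varepsilon R$. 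Combining,
$$ h_R(m)\ll \frac{1}{R^2}\cdot\frac{1}{m}\cdot\frac{1}{(mR)^{1/2}} = \frac{1}{R^{5/2}m^{3/2}}. $$

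\textbf{Main obstacle.} This bound is off by a factor of $R^{-1}$ from the target $R^{-3/2}m^{-3/2}$, so the crude estimate must be sharpened by exploiting cancellation in $P_{m-1}-P_{m+1}$. Using Bonnet's recursion, this difference equals $\frac{2m+1}{m(m+1)}(1-t^2)P_m'(t)$, up to sign, at $t=\cos R$; equivalently one can write it as $(2m+1)(1-t^2)P_m'(t)/(m(m+1))$ via the standard identity $(1-t^2)P_m' = \frac{m(m+1)}{2m+1}(P_{m-1}-P_{m+1})$. Hence
$$ h_R(m) = \frac{(1-\cos^2 R)P_m'(\cos R)}{2\sin^2(R/2)\,m(m+1)} \asymp \frac{\sin R\,\bigl|\tfrac{\mathrm{d}}{\mathrm{d}\theta}P_m(\cos\theta)\bigr|_{\theta=R}}{R^2m^2}. $$
The derivative bound $\bigl|\tfrac{\mathrm{d}}{\mathrm{d}\theta}P_m(\cos\theta)\bigr|\ll m^{1/2}(\sin\theta)^{-1/2}$ follows from the Hilb/Szegő asymptotic for $m\theta\ge1$. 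In terms of associated Legendre functions, $P_m^1(\cos\theta)\ll m^{1/2}\theta^{-1/2}$, uniformly for $\theta\in[1/m,\pi-\varepsilon]$. Substituting gives $h_R(m)\ll R\cdot m^{1/2}R^{-1/2}/(R^2m^2)=R^{-3/2}m^{-3/2}$, as claimed.

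The step I expect to be delicate is securing this uniform derivative bound for $P_m^1$ across the whole range $\theta\in[1/m,\pi-\varepsilon]$. I would cite Szegő's uniform estimate for Jacobi polynomials (with $\alpha=\beta=1$ after differentiation) rather than prove it from scratch.
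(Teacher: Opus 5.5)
Your proof is correct. The paper gives no proof of this lemma; it is quoted from \cite{HumphriesRadziwill}. So there is no in-text argument to compare with, and what you have written is a valid self-contained replacement for the citation.

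\textbf{A shorter derivation of the key formula.} The detour through $P_{m\pm1}$, Bernstein's inequality and the lossy intermediate bound $R^{-5/2}m^{-3/2}$ is unnecessary. Integrating the Legendre equation $\frac{\mathrm{d}}{\mathrm{d}t}\bigl[(1-t^2)P_m'(t)\bigr]=-m(m+1)P_m(t)$ over $[\cos R,1]$ gives at once
\[
h_R(m)=\frac{\sin^2 R\,P_m'(\cos R)}{2\sin^2(R/2)\,m(m+1)}=\frac{2\cos^2(R/2)}{m(m+1)}\,P_m'(\cos R),
\]
which is your formula. Hence $|h_R(m)|\ll |P_m^1(\cos R)|/(m^2R)$ for every $0<R\le\pi$. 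The hypothesis $R\le\pi-\varepsilon$ enters only through the final estimate on $P_m^1$.

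\textbf{The external estimate.} It is what you say. Use $P_m'=\frac{m+1}{2}P_{m-1}^{(1,1)}$ together with Szeg\H{o}'s bound $P_n^{(1,1)}(\cos\theta)\ll n^{-1/2}\theta^{-3/2}$ for $c/n\le\theta\le\pi/2$. On $[\pi/2,\pi-\varepsilon]$ apply the symmetry $\theta\mapsto\pi-\theta$. Together these give $|P_m^1(\cos\theta)|\ll_\varepsilon m^{1/2}\theta^{-1/2}$ for $1/m\le\theta\le\pi-\varepsilon$, which is exactly the range $m\ge 1/R$. Citing this is legitimate.

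\textbf{Why the derivative is the right object.} Suppose you instead insert an asymptotic for $P_{m\pm1}(\cos R)$ into $P_{m-1}(\cos R)-P_{m+1}(\cos R)$. The two error terms do not cancel. A Darboux-type error $O\bigl((m\sin\theta)^{-3/2}\bigr)$ then costs a factor $(mR^2)^{-1}$ against the target, so the bound fails when $mR^2<1$. That route closes only with an error term carrying the correct power of $\theta$. One example is Hilb's formula $P_m(\cos\theta)=\sqrt{\theta/\sin\theta}\,J_0\bigl((m+\tfrac12)\theta\bigr)+O(\theta^{1/2}m^{-3/2})$, combined with $J_1(x)\ll x^{-1/2}$. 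Converting the difference into a single derivative, as you do, avoids this bookkeeping entirely.
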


\subsection{1-Wasserstein distance and the Berry--Esseen inequality}

To be able to bound the $1$-Wasserstein distance between the relevant measures in our problem, we will need the Berry--Esseen inequality, which gives upper bounds for the $1$-Wasserstein distance in terms of the Fourier transforms of the involved measures.

This inequality was originally proved for the real line by Esseen in \cite[Ch.~II, Thm.~2b]{Esseen-Wasserstein}, but the main ideas to prove it had already appeared in the earlier paper \cite{Berry-Wasserstein} by Berry. The analogous inequality for the $n$-sphere was later shown in \cite[Thm.~1]{GrabnerTichy-Wasserstein} by Grabner and Tichy. In the last few years, such an inequality was proved for the $n$-torus $\mathbb{T}^{n} = (\mathbb{R}/\mathbb{Z})^n$ by Bobkov and Ledoux in \cite[Prop.~2]{BobkovLedoux-Wasserstein} and independently by Borda in \cite[Prop.~3]{Borda-Wasserstein}. Recently, it was shown in a greater generality for compact Lie groups, compact homogeneous spaces, and compact Riemannian manifolds in \cite{BordaCuenin-Wasserstein} by Borda and Cuenin, and then extended to finite-volume (not necessarily compact) hyperbolic surfaces in \cite[Thms.~1.5, 1.10]{Humphries2025} by Humphries. The version we require for our purposes is the following, which is a consequence of \cite[Thm.~7]{BordaCuenin-Wasserstein} and \cite[Thm.~1]{GrabnerTichy-Wasserstein}.

\begin{proposition}\label{BerryEsseenSphere}
Let $\sigma'$ be a Borel probability measure on $\mathcal{O}^{\times} \backslash S^2$. Then, for any integer $T\geq 1$, $$ \mathscr{W}_{1}\left( \sigma', \frac{1}{\sigma(\mathcal{O}^{\times} \backslash S^2)} \sigma \right) \ll \frac{1}{T} + \left( \sum_{\substack{\phi \in \mathcal{B} \\ \ell_{\phi} \leq T }} \frac{1}{\ell_{\phi}(\ell_{\phi}+1)} \left| \int_{\mathcal{O}^{\times} \backslash S^2} \phi(x) \: \mathrm{d}\sigma'(x) \right|^{2} \right)^{\frac{1}{2}}. $$
\end{proposition}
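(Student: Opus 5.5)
The plan is to derive Proposition \ref{BerryEsseenSphere} from the general smoothing inequality of Borda--Cuenin \cite[Thm.~7]{BordaCuenin-Wasserstein}, specialized to the compact homogeneous space $S^2$. We then transfer the result to the quotient $\mathcal{O}^{\times} \backslash S^2$ by lifting measures and test functions. The underlying mechanism is Kantorovich--Rubinstein duality: we have $\mathscr{W}_1(\mu_1,\mu_2) = \sup_{f} \int f \, \mathrm{d}(\mu_1-\mu_2)$, where $f$ ranges over $1$-Lipschitz functions on $\mathcal{O}^{\times}\backslash S^2$ with the quotient metric. Each such $f$ lifts to an $\mathcal{O}^{\times}$-invariant $1$-Lipschitz function $\tilde f$ on $S^2$.

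We first smooth. Convolve $\tilde f$ with a zonal kernel $k_T$, i.e.\ a point-pair invariant, whose spherical transform is supported on $\ell \leq T$ and satisfies $\int_{S^2} k_T(p,x)\, d(p,x)\, \mathrm{d}\sigma(x) \ll 1/T$. A suitable choice is a normalized square of a Fejér/Jackson-type sum of Legendre polynomials, which is the construction in \cite{GrabnerTichy-Wasserstein}. Since $k_T$ is a point-pair invariant, $k_T * \tilde f$ stays $\mathcal{O}^{\times}$-invariant, and the Lipschitz bound gives $\|f - k_T*f\|_\infty \ll 1/T$. This is the source of the $1/T$ term.

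Next we expand the smoothed function. By Lemma \ref{SpectralRes} and Lemma \ref{LemmaIntPointPairFunction}, $k_T*f = \langle f,f_0\rangle f_0 + \sum_{\ell_\phi\leq T} \widehat{k_T}(\ell_\phi)\langle f,\phi\rangle \phi$. The constant term cancels between the two probability measures. Integrating against $\sigma'-\sigma/\sigma(\mathcal{O}^{\times}\backslash S^2)$ and noting $\int\phi\,\mathrm{d}\sigma = 0$ for $\phi \in \mathcal{B}$, Cauchy--Schwarz with weights $\lambda_\phi = \ell_\phi(\ell_\phi+1)$ bounds the main term by
\[
\Bigl(\sum_{\ell_\phi\leq T} \lambda_\phi |\langle f,\phi\rangle|^2\Bigr)^{1/2}\Bigl(\sum_{\ell_\phi\leq T}\lambda_\phi^{-1}\Bigl|\int\phi\,\mathrm{d}\sigma'\Bigr|^2\Bigr)^{1/2},
\]
using $|\widehat{k_T}|\leq 1$. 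By Green's identity, the first factor is at most $\|\nabla f\|_{L^2}\ll 1$, since $f$ is $1$-Lipschitz, hence in $H^1$ with gradient bounded by $1$, and the quotient has finite volume. This yields exactly the second term.

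The main obstacle is the smoothing kernel: we need a band-limited zonal kernel whose first moment of distance decays like $1/T$, not merely $\log T/T$. The plain Fejér kernel on $S^2$ has a heavy tail, so I would take a higher power such as the fourth power of a Legendre sum, for which the moment estimate is standard. A second issue is that $\sigma'$ need not be absolutely continuous. I would handle this by working with continuous $f$ throughout, so that $\int (k_T*f)\,\mathrm{d}\sigma'$ makes sense termwise because the expansion is a finite sum. Both points are precisely what \cite[Thm.~7]{BordaCuenin-Wasserstein} and \cite[Thm.~1]{GrabnerTichy-Wasserstein} supply, so in practice I would cite them for the kernel and carry out the quotient reduction above explicitly.
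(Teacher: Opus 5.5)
Your argument is correct. It also goes further than the paper: the paper gives no proof and only remarks that the inequality follows from \cite[Thm.~7]{BordaCuenin-Wasserstein} and \cite[Thm.~1]{GrabnerTichy-Wasserstein}.

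You instead rederive the smoothing inequality from scratch. The steps are:
\begin{enumerate}
\item Kantorovich--Rubinstein duality.
\item A zonal kernel of degree $\le T$ with first distance moment $\ll 1/T$. The fourth power of $P_N^{(1,0)}$ with $N\asymp T$ works, by $|P_N^{(1,0)}(\cos\theta)|\ll \min(N, N^{-1/2}\theta^{-3/2})$ on $[0,\pi/2]$. As you note, the square only gives $\log T/T$.
\item The finite, hence pointwise, spectral expansion of $k_T*f$. This takes care of singular $\sigma'$.
\item Weighted Cauchy--Schwarz.
\item The bound $\sum_{\phi}\ell_\phi(\ell_\phi+1)|\langle f,\phi\rangle|^2\le\|\nabla f\|_{L^2}^2$. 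This is really Bessel's inequality for the system $\nabla\phi/\sqrt{\ell_\phi(\ell_\phi+1)}$, which Green's identity shows is orthonormal.
\end{enumerate}

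The real gain of your route is the quotient step. $\mathcal{O}^{\times}\backslash S^2$ is the quotient of $S^2$ by a rotation group of order $12$. It is an orbifold with cone points, not a manifold or homogeneous space, so the cited theorems do not apply verbatim, and the paper says nothing about this. Your lifting of test functions settles it. So does your check that $\langle k_T*\tilde f,\phi\rangle=\widehat{k_T}(\ell_\phi)\langle f,\phi\rangle$ in the quotient inner product, where the factor $1/12$ cancels. That check also confirms the sum runs over $\mathcal{B}$ with the paper's normalization.

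An equivalent repair, closer to the paper's citation, is to apply the $S^2$ inequality to the $\mathcal{O}^{\times}$-invariant lift of $\sigma'$. This uses two facts: the projection $S^2\to\mathcal{O}^{\times}\backslash S^2$ is $1$-Lipschitz, so it pushes couplings forward without increasing cost; and an invariant measure only sees $H_\ell^{\mathcal{O}^{\times}}\subset H_\ell$.

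One small thing to state explicitly: $k_T\ge 0$ and $\int_{S^2} k_T(p,x)\,\mathrm{d}\sigma(x)=1$. Both the uniform bound $\|f-k_T*f\|_\infty\ll 1/T$ and $|\widehat{k_T}(\ell)|\le 1$ rest on these two properties.
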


\section{Proofs}\label{Proofs}

In this section we show the main results of this paper, namely Theorems \ref{MainThm-Wass}, \ref{MainThm-AllCenters}, \ref{MainThm-SphCapDisc}, and \ref{MainThm-AlmostAllCenters}. We start with Theorem \ref{MainThm-Wass}.

\begin{proof}[Proof of Theorem \ref{MainThm-Wass}.]
Fix $T\geq 1$. By Corollary \ref{WatsonIchinoClassical}, Proposition \ref{LfuncsData}, Theorem \ref{AdjointBound}, GLH, and Proposition \ref{BerryEsseenSphere}, we have 
\begin{align}
    \mathscr{W}_{1}\left(\sigma_{\psi}, \frac{1}{\sigma(\mathcal{O}^{\times} \backslash S^2)} \sigma \right) &\ll \frac{1}{T} + \left( \sum_{\substack{\phi \in \mathcal{B} \\ \ell_{\phi} \leq T }} \frac{1}{\ell_{\phi}(\ell_{\phi}+1)} \left| \int_{\mathcal{O}^{\times} \backslash S^2} \phi(x) \: \mathrm{d}\sigma_{\psi}(x) \right|^{2} \right)^{\frac{1}{2}} \label{MainThmWass-eq1} \\
    &= \frac{1}{T} + \left( \sum_{\substack{\phi \in \mathcal{B} \\ \ell_{\phi} \leq T }} \frac{1}{\ell_{\phi}(\ell_{\phi}+1)} |\langle \phi, |\psi|^2\rangle |^{2} \right)^{\frac{1}{2}} \nonumber \\
    &\ll_{\varepsilon} \frac{1}{T} + \left( \ell_{\psi}^{-\frac{1}{2} + \varepsilon} \sum_{\substack{\phi \in \mathcal{B} \\ \ell_{\phi} \leq \min\{T,2\ell_{\psi}\}}} \frac{1}{\ell_{\phi}^3 (1+2\ell_{\psi} - \ell_{\phi})^{1/2}} \right)^{\frac{1}{2}}. \nonumber
\end{align}
We choose $T = \left\lceil \ell_{\psi}^{\frac{1}{2}} \right\rceil$. We divide the sum on the right-hand side of \eqref{MainThmWass-eq1} into dyadic ranges of the form $a_m \leq \ell_{\phi} \leq 2a_m$, where $a_m := 2^m$ and $0\leq m\leq M := \lfloor \log_2(T) \rfloor$. For each dyadic range, since $\ell_{\phi} \asymp a_m$, then $1+2\ell_{\psi}-a_m \asymp \ell_{\psi}$, so that
\begin{equation}
    \sum_{\substack{\phi \in \mathcal{B} \\ a_m \leq \ell_{\phi} \leq 2a_m}} \frac{1}{\ell_{\phi}^3 (1+2\ell_{\psi} - \ell_{\phi})^{1/2}} \asymp \frac{1}{a_{m}^{3} \ell_{\psi}^{1/2}} \sum_{\substack{\phi \in \mathcal{B} \\ a_m \leq \ell_{\phi} \leq 2a_m}} 1 \ll \frac{1}{a_m \ell_{\psi}^{1/2}}, \label{MainThmWass-eq2}
\end{equation}
where in the last inequality we used the Weyl law. Since the sequence $\{a_m\}$ is increasing and $M \ll_{\varepsilon} \ell_{\psi}^{\varepsilon}$, by \eqref{MainThmWass-eq1} and \eqref{MainThmWass-eq2} it follows that 
\begin{align*}
    \mathscr{W}_{1}\left(\sigma_{\psi}, \frac{1}{\sigma(\mathcal{O}^{\times} \backslash S^2)} \sigma \right) &\ll_{\varepsilon} \ell_{\psi}^{-\frac{1}{2}} + \left( \ell_{\psi}^{-\frac{1}{2}+\varepsilon} \sum_{m=0}^{M} \frac{1}{a_{m} \ell_{\psi}^{1/2}} \right)^{\frac{1}{2}} \\
    &\ll \ell_{\psi}^{-\frac{1}{2}} + \left( \ell_{\psi}^{-\frac{1}{2}+\varepsilon} \frac{M}{a_{0}\ell_{\psi}^{1/2}} \right)^{\frac{1}{2}} \ll_{\varepsilon} \ell_{\psi}^{-\frac{1}{2}+\varepsilon}. \qedhere
\end{align*}
\end{proof}

We continue with Theorem \ref{MainThm-AllCenters}; for the idea of the proof, we follow \cite[Thm.~1.5]{HumphriesRadziwill}.

\vspace{0.3cm}

\noindent \textit{Proof of Theorem \ref{MainThm-AllCenters}.} We will show that there exist functions $g_{R,w}(\ell_{\psi})$ and $h_{R,w}(\ell_{\psi})$ such that $$ g_{R,w}(\ell_{\psi}) \leq \frac{1}{\mathrm{vol}(B_R)} \int_{B_R (w)} |\psi(z)|^2 \: \mathrm{d}\sigma(z) - \frac{1}{\mathrm{vol}(\mathcal{O}^{\times} \backslash S^2)} \leq h_{R,w}(\ell_{\psi})$$ and that $|g_{R,w}(\ell_{\psi})|, |h_{R,w}(\ell_{\psi})| = o_{w,\delta}(1)$. This claim implies the result. We will only focus on the lower bound; the upper bound works in a similar way. Let $\varepsilon>0$ be sufficiently small and fix $0<\delta < \frac{1}{3}$. From Lemmas \ref{SpectralRes}, \ref{IntBallEqualInnerProd}, \ref{ConvPointPairInvariants}, and \ref{SpecExpPointPairInvariants}, for $0<\rho < R$ we have 
\begin{align}
    &\frac{1}{\mathrm{vol}(B_R)} \int_{B_R (w)} |\psi(z)|^2 \: \mathrm{d}\sigma(z) \label{MainThmAllCenters-eq0} \\
    &\geq \frac{\mathrm{vol}(B_{R-\rho})}{12\mathrm{vol}(B_R)} \langle |\psi|^2, K_{R-\rho} * K_{\rho} (\cdot, w) \rangle \nonumber \\
    &= \frac{\mathrm{vol}(B_{R-\rho})}{\mathrm{vol}(B_{R})\mathrm{vol}(\mathcal{O}^{\times} \backslash S^2)} + \frac{\mathrm{vol}(B_{R-\rho})}{\mathrm{vol}(B_R)} \sum_{\phi \in \mathcal{B}} h_{R-\rho}(\ell_{\phi}) h_{\rho}(\ell_{\phi}) \langle |\psi|^2, \phi\rangle \overline{\phi(w)}. \nonumber
\end{align}
We impose the condition $\ell_{\psi}^{-1} \leq \rho \leq \frac{R}{2}$. We will show that 
\begin{equation}
    \left|\frac{\mathrm{vol}(B_{R-\rho})}{\mathrm{vol}(B_R)} \sum_{\phi \in \mathcal{B}} h_{R-\rho}(\ell_{\phi}) h_{\rho}(\ell_{\phi}) \langle |\psi|^2, \phi\rangle \overline{\phi(w)} \right| \ll_{\varepsilon, \delta} \ell_{\psi}^{-\frac{1}{2} + \frac{3}{2}\delta + \varepsilon} + R^{-\frac{3}{2}} \ell_{\psi}^{- \frac{1}{2} + \varepsilon} + R^{-\frac{3}{2}} \rho^{-\frac{3}{2}} \ell_{\psi}^{-2+\varepsilon}. \label{MainThmAllCenters-eq1}
\end{equation}
Denote by $I$ the left-hand side of \eqref{MainThmAllCenters-eq1}. By Corollary \ref{WatsonIchinoClassical}, we know that the terms in the sum defining $I$ are nonzero only when $\ell_{\phi} \leq 2\ell_{\psi}$ (and additionally $\ell_{\phi}$ must be even, but we will not use this in the proof). We divide this sum into four ranges: $\ell_{\phi} \leq R^{-1}$, $R^{-1} \leq \ell_{\phi} \leq \rho^{-1}$, $\rho^{-1} \leq \ell_{\phi} \leq \ell_{\psi}$, and $\ell_{\psi} \leq \ell_{\phi} \leq 2\ell_{\phi}$, so that by using the triangle inequality, the facts that $\mathrm{vol}(B_{r}) \asymp r^2$ for $r$ sufficiently small and $R-\rho \asymp R$, and Lemma \ref{boundsSHCTransform}, we have
\begin{align}
    |I| &\ll \sum_{\substack{\phi \in \mathcal{B} \\ \ell_{\phi} \leq R^{-1}}} |\langle |\psi|^2, \phi\rangle | \: |\phi(w)| + \frac{1}{R^{\frac{3}{2}}} \sum_{\substack{\phi \in \mathcal{B} \\ R^{-1} \leq \ell_{\phi} \leq \rho^{-1}}} \frac{1}{\ell_{\phi}^{3/2}} |\langle |\psi|^2, \phi\rangle | \: |\phi(w)| \label{MainThmAllCenters-eq2} \\
    &\quad + \frac{1}{R^{\frac{3}{2}} \rho^{\frac{3}{2}}} \sum_{\substack{\phi \in \mathcal{B} \\ R^{-1} \leq \ell_{\phi} \leq \ell_{\psi}}} \frac{1}{\ell_{\phi}^3} |\langle |\psi|^2, \phi\rangle | \: |\phi(w)| + \frac{1}{R^{\frac{3}{2}} \rho^{\frac{3}{2}}} \sum_{\substack{\phi \in \mathcal{B} \\ \ell_{\psi} \leq \ell_{\phi} \leq 2\ell_{\psi}}} \frac{1}{\ell_{\phi}^3} |\langle |\psi|^2, \phi\rangle | \: |\phi(w)| \nonumber
\end{align}

We first deal with the range $\ell_{\phi} \leq R^{-1}$. We subdivide this range into dyadic ranges of the form $a_{m} \leq \ell_{\phi} \leq 2a_{m}$, where $a_{m} := 2^{m}$ and $0\leq m \leq M_1$, with $M_1 := \lfloor \log_{2}(R^{-1}) \rfloor$. For each dyadic range, since $\ell_{\phi} \asymp a_{m}$ and $1+2\ell_{\psi} - a_m \asymp \ell_{\psi}$, by using the Cauchy--Schwarz inequality, the local Weyl law, the Weyl law, Corollary \ref{WatsonIchinoClassical}, Lemma \ref{LfuncsData}, Theorem \ref{AdjointBound}, and GLH, we have 
\begin{align}
    \sum_{\substack{\phi \in \mathcal{B} \\ a_{m} \leq \ell_{\phi} \leq 2a_{m}}} |\langle |\psi|^2, \phi\rangle | \: |\phi(w)| &\ll \left( \sum_{\substack{\phi \in \mathcal{B} \\ a_{m} \leq \ell_{\phi} \leq 2a_{m}}} |\langle |\psi|^2, \phi \rangle|^2 \right)^{\frac{1}{2}} \left( \sum_{\substack{\phi \in \mathcal{B} \\ a_{m} \leq \ell_{\phi} \leq 2a_{m}}} |\phi(w)|^2 \right)^{\frac{1}{2}} \label{MainThmAllCenters-eq3} \\
    &\ll_{\varepsilon} a_{m} \left( \ell_{\psi}^{-\frac{1}{2} + 2\varepsilon} \sum_{\substack{\phi \in \mathcal{B} \\ a_{m} \leq \ell_{\phi} \leq 2a_{m}}} \frac{1}{\ell_{\phi} (1+2\ell_{\psi} - \ell_{\phi})^{\frac{1}{2}}} \right)^{\frac{1}{2}} \nonumber \\
    &\asymp_{\varepsilon} \frac{a_{m}^{\frac{1}{2}}}{\ell_{\psi}^{\frac{1}{4}-\varepsilon}} \left( \sum_{\substack{\phi \in \mathcal{B} \\ a_{m} \leq \ell_{\phi} \leq 2a_{m} }} 1 \right)^{\frac{1}{2}} \ll_{\varepsilon} a_{m}^{\frac{3}{2}} \ell_{\psi}^{-\frac{1}{2} + \varepsilon}. \nonumber
\end{align}
Since $M_1 \ll_{\varepsilon} \ell_{\psi}^{\varepsilon}$, it follows that 
\begin{equation}
    \sum_{\substack{\phi \in \mathcal{B} \\ \ell_{\phi} \leq R^{-1}}} |\langle |\psi|^2, \phi\rangle | \: |\phi(w)| \leq \sum_{m=0}^{M_1} \sum_{\substack{\phi \in \mathcal{B} \\ a_{m} \leq \ell_{\phi} \leq 2a_{m}}} |\langle |\psi|^2, \phi\rangle | \: |\phi(w)| \ll_{\varepsilon} \ell_{\psi}^{\varepsilon} a_{M_1}^{\frac{3}{2}} \ell_{\psi}^{-\frac{1}{2} + \varepsilon} \ll_{\varepsilon} \ell_{\psi}^{-\frac{1}{2} + \frac{3}{2}\delta + 2\varepsilon}. \label{MainThmAllCenters-eq4}
\end{equation}

We deal with the range $R^{-1} \leq \ell_{\phi} \leq \rho^{-1}$ in a similar way. We subdivide this range into dyadic ranges of the form $b_{m} \leq \ell_{\phi} \leq 2b_{m}$, where $b_{m} := 2^{m}R^{-1}$ and $0\leq m \leq M_2 := \lfloor \log_{2}(\rho^{-1} R) \rfloor$. Doing the same analysis as in \eqref{MainThmAllCenters-eq3} and \eqref{MainThmAllCenters-eq4}, we obtain 
\begin{equation}
    \sum_{\substack{\phi \in \mathcal{B} \\ R^{-1} \leq \ell_{\phi} \leq \rho^{-1}}} \frac{1}{\ell_{\phi}^{3/2}} |\langle |\psi|^2, \phi\rangle | \: |\phi(w)| \ll_{\varepsilon} \ell_{\psi}^{- \frac{1}{2} + 2\varepsilon}. \label{MainThmAllCenters-eq5}
\end{equation}

Similarly with the range $\rho^{-1} \leq \ell_{\phi} \leq \ell_{\psi}$, we subdivide the range into dyadic ranges of the form $c_m \leq \ell_{\phi} \leq 2c_m$, where $c_m := 2^{m} \rho^{-1}$ and $0\leq m\leq M_3 := \lfloor \log_{2}(\ell_{\psi} \rho ) \rfloor$. The same procedure yields 
\begin{equation}
    \sum_{\substack{\phi \in \mathcal{B} \\ \rho^{-1} \leq \ell_{\phi} \leq \ell_{\psi}}} \frac{1}{\ell_{\phi}^{3}} |\langle |\psi|^2, \phi\rangle | \: |\phi(w)| \ll_{\varepsilon, \delta} \ell_{\psi}^{-\frac{1}{2} + 2\varepsilon} \rho^{\frac{3}{2}}. \label{MainThmAllCenters-eq6}
\end{equation}

Finally, for the range $\ell_{\psi} \leq \ell_{\phi} \leq 2\ell_{\psi}$, we subdivide this range into dyadic ranges of the form $2\ell_{\psi} - 2d_{m} \leq \ell_{\phi} \leq 2\ell_{\psi} - d_{m}$, where $d_{m} := 2^{m}$ and $0\leq m \leq M_4 := \lfloor \log_{2}(\ell_{\psi}) \rfloor$. From following a similar analysis as before we obtain
\begin{equation}
    \sum_{\substack{\phi \in \mathcal{B} \\ \ell_{\psi} \leq \ell_{\phi} \leq 2\ell_{\psi}}} \frac{1}{\ell_{\phi}^3} |\langle |\psi|^2, \phi\rangle | \: |\phi(w)| \ll_{\varepsilon} \ell_{\psi}^{-2+2\varepsilon}. \label{MainThmAllCenters-eq7}
\end{equation}
Combining \eqref{MainThmAllCenters-eq2}, \eqref{MainThmAllCenters-eq4}, \eqref{MainThmAllCenters-eq5}, \eqref{MainThmAllCenters-eq6}, and \eqref{MainThmAllCenters-eq7} we obtain \eqref{MainThmAllCenters-eq1}. 

Now, using that $\mathrm{vol}(B_{R-\rho}) = R^2 + \mathcal{O}(R\rho)$, from \eqref{MainThmAllCenters-eq0} and \eqref{MainThmAllCenters-eq1} we have 
\begin{align*}
    &\frac{1}{\mathrm{vol}(B_R)} \int_{B_{R}(w)} |\psi(z)|^2 \: \mathrm{d}\sigma(z) \\
    &\geq \frac{1}{\mathrm{vol}(\mathcal{O}^{\times} \backslash S^2)} + \mathcal{O}_{\varepsilon, \delta}\left( \frac{\rho}{R} + \ell_{\psi}^{-\frac{1}{2} + \frac{3}{2}\delta + \varepsilon} + R^{-\frac{3}{2}} \ell_{\psi}^{- \frac{1}{2} + \varepsilon} + R^{-\frac{3}{2}} \rho^{-\frac{3}{2}} \ell_{\psi}^{-2+\varepsilon} \right).
\end{align*}
Taking $\rho = R^{-\frac{1}{5}} \ell_{\psi}^{-\frac{4}{5}}$ (which satisfies that $\ell_{\psi}^{-1} \leq \rho \leq R/2$) and using that $R \gg \ell_{\psi}^{-\delta}$, we have that $$ \frac{\rho}{R} \ll \ell_{\psi}^{-\frac{2}{5}(2-3\delta)} \: \text{ and } \: R^{-\frac{3}{2}} \rho^{-\frac{3}{2}} \ell_{\psi}^{-2} \ll \ell_{\psi}^{-\frac{2}{5}(2-3\delta)},$$ from where it follows that 
\[
\pushQED{\qed} 
\frac{1}{\mathrm{vol}(B_R)} \int_{B_{R}(w)} |\psi(z)|^2 \: \mathrm{d}\sigma(z) \geq \frac{1}{\mathrm{vol}(\mathcal{O}^{\times} \backslash S^2)} + \mathcal{O}_{\varepsilon, \delta}\left( \ell_{\psi}^{-\frac{1}{2}(1-3\delta) + \varepsilon} \right). \qedhere
\popQED 
\]

The proof of Theorem \ref{MainThm-SphCapDisc} follows a similar idea as the proof of Theorem \ref{MainThm-AllCenters}.

\vspace{0.3cm}

\noindent \textit{Proof of Theorem \ref{MainThm-SphCapDisc}.} As in the proof of Theorem \ref{MainThm-AllCenters}, we will give a lower bound for $\sigma_{\psi}(B_{R}(w)) - \sigma(B_{R}(w))/\sigma(\mathcal{O}^{\times} \backslash S^2)$ that will have the order $\ell_{\psi}^{-\frac{1}{2}+\varepsilon}$. The corresponding upper bound (with the same order of growth) can be shown similarly.

Fix $0<R\leq \pi$ and $w\in \mathcal{O}^{\times} \backslash S^2$. We know that $\sigma(B_r(w)) \asymp r^2$ and that $\sigma(B_{R-\rho}(w)) = \sigma(B_{R}(w)) + \mathcal{O}(R\rho)$. From these observations, the triangle inequality, and \eqref{MainThmAllCenters-eq0}, for $0<\rho<R/2$ we have that
\begin{equation}
    \sigma_{\psi}(B_{R}(w)) \geq \frac{\sigma(B_{R}(w))}{\sigma(\mathcal{O}^{\times} \backslash S^2)} + \mathcal{O}\left( R\rho + R^2 \sum_{\phi \in \mathcal{B}} |h_{R-\rho}(\ell_{\phi}) h_{\rho}(\ell_{\phi})| \: |\langle |\psi|^2, \phi \rangle| \: |\phi(w)| \right). \label{ProofMainThmSphCapDisc-eq1}
\end{equation}

We only consider the case $R\geq 2\ell_{\psi}^{-1}$; the other case works similarly. Assuming the condition $\ell_{\psi}^{-1} \leq \rho \leq R/2$, from the proof of \eqref{MainThmAllCenters-eq1} we have that
\begin{equation}
    R^2 \sum_{\phi \in \mathcal{B}} |h_{R-\rho}(\ell_{\phi}) h_{\rho}(\ell_{\phi})| \: |\langle |\psi|^2, \phi \rangle| \: |\phi(w)| \ll_{\varepsilon} R^{\frac{1}{2}} \ell_{\psi}^{-\frac{1}{2}+\varepsilon} + R^{\frac{1}{2}} \rho^{-\frac{3}{2}} \ell_{\psi}^{-2+\varepsilon}. \label{ProofMainThmSphCapDisc-eq2}
\end{equation}
for all $\varepsilon>0$. Again, this can be shown by dividing the left-hand side sum into four ranges ($\ell_{\phi} \leq R^{-1}$, $R^{-1} \leq \ell_{\phi} \leq \rho^{-1}$, $\rho^{-1} \leq \ell_{\phi} \leq \ell_{\psi}$, and $\ell_{\psi} \leq \ell_{\phi} \leq 2\ell_{\psi}$) and analyzing each range as in the proof of Theorem \ref{MainThm-AllCenters}. 

We choose $$\rho = \begin{dcases*}
    \ell_{\psi}^{-\frac{1}{2}} & if $R \geq 2\ell_{\psi}^{-\frac{1}{2}}$, \\
    R/2 & otherwise.
\end{dcases*} $$ If $R\geq 2\ell_{\psi}^{-\frac{1}{2}}$, inserting this in \eqref{ProofMainThmSphCapDisc-eq1} and \eqref{ProofMainThmSphCapDisc-eq2}, and using that $R\ll 1$, it follows that $$\sigma_{\psi}(B_{R}(w)) \geq \frac{\sigma(B_{R}(w))}{\sigma(\mathcal{O}^{\times} \backslash S^2)} + \mathcal{O}_{\varepsilon} \left( \ell_{\psi}^{-\frac{1}{2}} + \ell_{\psi}^{-\frac{1}{2}+\varepsilon} + \left( \ell_{\psi}^{-\frac{1}{2}} \right)^{-\frac{3}{2}} \ell_{\psi}^{-2+\varepsilon} \right) = \frac{\sigma(B_{R}(w))}{\sigma(\mathcal{O}^{\times} \backslash S^2)} + \mathcal{O}_{\varepsilon} \left( \ell_{\psi}^{-\frac{1}{2} + \varepsilon} \right). $$ Otherwise, using that $2\ell_{\psi}^{-1} \leq R \leq 2\ell_{\psi}^{-\frac{1}{2}}$, we obtain
\[
\pushQED{\qed} 
\sigma_{\psi}(B_{R}(w)) \geq \frac{\sigma(B_{R}(w))}{\sigma(\mathcal{O}^{\times} \backslash S^2)} + \mathcal{O}_{\varepsilon} \left( R^2 + R^{\frac{1}{2}} \ell_{\psi}^{-\frac{1}{2} + \varepsilon} + R^{-1} \ell_{\psi}^{-2+\varepsilon} \right) = \frac{\sigma(B_{R}(w))}{\sigma(\mathcal{O}^{\times} \backslash S^2)} + \mathcal{O}_{\varepsilon} \left( \ell_{\psi}^{-\frac{3}{4} + \varepsilon} \right). \qedhere
\popQED 
\]

Finally, we proceed to prove Theorem \ref{MainThm-AlmostAllCenters}; we follow \cite[\S 5.1]{HumphriesPhD}. The result will follow from the next proposition.

\begin{proposition}\label{BoundsVariance}
Let $\psi$ be a spherical harmonic of Laplacian eigenvalue $\lambda_{\psi}^2 = \ell_{\psi}(\ell_{\psi}+1)$ for some integer $\ell_{\psi}\geq 0$, eigenfunction of all the Hecke operators, invariant under $\mathcal{O}^{\times}$, and normalized such that $\langle \psi, \psi\rangle = 1$. For $R>0$, let $$\mathrm{Var}(\psi; R) := \int_{\mathcal{O}^{\times} \backslash S^2} \left( \frac{1}{\mathrm{vol}(B_R)} \int_{B_R(w)} |\psi(z)|^2 \: \mathrm{d}\sigma(z) - \frac{1}{\mathrm{vol}(\mathcal{O}^{\times} \backslash S^2)} \right)^2 \mathrm{d}\sigma(w).$$ Assume the generalized Lindelöf hypothesis, and suppose that $R \gg \ell_{\psi}^{-\delta}$ for some $\delta >0$. Then, for $0<\delta < 1$, $$ \mathrm{Var}(\psi; R) \ll_{\varepsilon, \delta} \ell_{\psi}^{-(1-\delta)+\varepsilon}$$ as $\ell_{\psi}\rightarrow \infty$.
\end{proposition}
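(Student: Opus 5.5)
The plan is to expand the smoothed ball average spectrally and apply Parseval in the $w$-variable. By Lemma \ref{IntBallEqualInnerProd}, for each $w$ we have
$$\frac{1}{\mathrm{vol}(B_R)}\int_{B_R(w)}|\psi(z)|^2\,\mathrm{d}\sigma(z) = \langle |\psi|^2, K_R(\cdot,w)\rangle .$$
Expanding $|\psi|^2$ via Lemma \ref{SpectralRes} (the constant term contributes exactly $\frac{1}{\mathrm{vol}(\mathcal{O}^{\times}\backslash S^2)}$, since $\langle|\psi|^2,f_0\rangle f_0 = 1/\mathrm{vol}$ and $h_R(0)=1$), we get
$$\frac{1}{\mathrm{vol}(B_R)}\int_{B_R(w)}|\psi|^2\,\mathrm{d}\sigma - \frac{1}{\mathrm{vol}(\mathcal{O}^{\times}\backslash S^2)} = \sum_{\phi\in\mathcal{B}} h_R(\ell_\phi)\langle|\psi|^2,\phi\rangle\,\phi(w),$$
as an $L^2$-identity in $w$. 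Integrating the square over $\mathcal{O}^{\times}\backslash S^2$ and using orthonormality of $\mathcal{B}$ (Parseval) gives the exact formula
$$\mathrm{Var}(\psi;R)=\sum_{\phi\in\mathcal{B}} |h_R(\ell_\phi)|^2\,|\langle|\psi|^2,\phi\rangle|^2 .$$
Unlike the pointwise problem in Theorem \ref{MainThm-AllCenters}, no smoothing by $k_\rho$ and no local Weyl law are needed; this is why every $\delta<1$ is allowed.

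Next, insert Corollary \ref{WatsonIchinoClassical}, Lemma \ref{LfuncsData}, Theorem \ref{AdjointBound} and GLH. Only even $0<\ell_\phi\le 2\ell_\psi$ contribute, and
$$|\langle|\psi|^2,\phi\rangle|^2\ll_\varepsilon \frac{\ell_\psi^{\varepsilon}}{\ell_\phi\,(1+2\ell_\psi-\ell_\phi)^{1/2}\,\ell_\psi^{1/2}}.$$
Here GLH bounds the central triple product value by $\ell_\psi^{\varepsilon}$, because the analytic conductor is polynomial in $\ell_\psi$. Then split the $\phi$-sum into dyadic ranges and apply Lemma \ref{boundsSHCTransform} (taking $R\le\pi-\varepsilon$; large $R$ is harmless) together with the Weyl law, Proposition \ref{WeylLaw}.

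The ranges are handled as follows.
\begin{itemize}
\item For $\ell_\phi\le R^{-1}$, use $h_R\ll1$. A dyadic block $\ell_\phi\asymp a$ contributes $\ll a^2\cdot a^{-1}\ell_\psi^{-1+\varepsilon}=a\,\ell_\psi^{-1+\varepsilon}$. Summing over blocks gives $R^{-1}\ell_\psi^{-1+\varepsilon}\ll \ell_\psi^{-(1-\delta)+\varepsilon}$.
\item For $R^{-1}\le\ell_\phi\le\ell_\psi$, use $h_R(\ell_\phi)^2\ll R^{-3}\ell_\phi^{-3}$. A block $\ell_\phi\asymp b$ contributes $\ll R^{-3}b^{-2}\ell_\psi^{-1+\varepsilon}$, which is dominated by $b=R^{-1}$, giving again $R^{-1}\ell_\psi^{-1+\varepsilon}$.
\item For $\ell_\psi\le\ell_\phi\le2\ell_\psi$, use $h_R^2\ll R^{-3}\ell_\psi^{-3}$ and split into blocks where $2\ell_\psi-\ell_\phi\asymp d$, each containing $\ll \ell_\psi d$ forms. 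A block contributes $\ll R^{-3}\ell_\psi^{-3}\cdot \ell_\psi d\cdot \ell_\psi^{-1}d^{-1/2}\ell_\psi^{-1/2+\varepsilon}$. Summing over $d\le\ell_\psi$ gives $R^{-3}\ell_\psi^{-3+\varepsilon}\ll\ell_\psi^{-3+3\delta+\varepsilon}$, which is smaller than $\ell_\psi^{-1+\delta}$ since $\delta<1$.
\end{itemize}
The number of dyadic blocks is $\ll\log\ell_\psi$, which is absorbed into $\ell_\psi^{\varepsilon}$. Adding the three ranges gives $\mathrm{Var}(\psi;R)\ll_{\varepsilon,\delta}\ell_\psi^{-(1-\delta)+\varepsilon}$.

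The main point needing care is the first step. One must justify exchanging the $w$-integral with the spectral sum (it is Parseval applied to the $L^2$-function $w\mapsto\langle|\psi|^2,K_R(\cdot,w)\rangle$) and confirm that no multiplicative constant like the $12$ in Lemma \ref{SpecExpPointPairInvariants} disturbs the cancellation of the constant term. Lemma \ref{IntBallEqualInnerProd} gives the coefficient $h_R(\ell_\phi)\langle|\psi|^2,\phi\rangle$ directly, so this should be clean. The remaining risk is bookkeeping near $\ell_\phi\approx2\ell_\psi$, where the factor $(1+2\ell_\psi-\ell_\phi)^{-1/2}$ needs the dyadic treatment in $d$ described above.
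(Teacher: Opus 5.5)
Your proposal is correct and follows the paper's proof essentially step for step: the Parseval variance formula (Proposition \ref{VarianceFormula}), then the Watson--Ichino/GLH/adjoint bound, then the same three ranges $\ell_\phi\le R^{-1}$, $R^{-1}\le\ell_\phi\le\ell_\psi$, $\ell_\psi\le\ell_\phi\le 2\ell_\psi$, each handled with Lemma \ref{boundsSHCTransform}, dyadic blocks and the Weyl law. Your bookkeeping keeps $R$ explicit, so the first two ranges each give $R^{-1}\ell_\psi^{-1+\varepsilon}$; this is slightly cleaner than the paper's, which replaces $R^{-3}$ by $\ell_\psi^{3\delta}$ before summing and then states an intermediate bound that tacitly needs $R\asymp\ell_\psi^{-\delta}$.
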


\noindent \textit{Proof of Theorem \ref{MainThm-AlmostAllCenters}.} By Chebyshev's inequality and Proposition \ref{BoundsVariance}, we have 
\begin{align*}
    &\mathrm{vol}\left( \left\{ w\in \mathcal{O}^{\times} \backslash S^2 : \left| \frac{1}{\mathrm{vol}(B_R)} \int_{B_R(w)} |\psi(z)|^2 \: \mathrm{d}\sigma(z) - \frac{1}{\mathrm{vol}(\mathcal{O}^{\times} \backslash S^2)} \right| > c \right\} \right) \\
    &\leq \frac{1}{c^2} \mathrm{Var}(\psi; R) \ll_{\varepsilon, \delta} \frac{\ell_{\psi}^{-(1-\delta)+\varepsilon}}{\left(\ell_{\psi}^{-(1-\delta)/2 + \varepsilon} \right)^2} = \ell_{\psi}^{-\varepsilon},
\end{align*}
which implies the result. \hfill \qed

\vspace{0.1cm}
Then Theorem \ref{MainThm-AlmostAllCenters} follows from Proposition \ref{BoundsVariance}. To prove Proposition \ref{BoundsVariance}, we first give a spectral expansion of $\mathrm{Var}(\psi,R)$.

\begin{proposition}\label{VarianceFormula}
Let $\psi$ be an $L^2$-normalized spherical harmonic of Laplacian eigenvalue $\lambda_{\psi}^2 = \ell_{\psi}(\ell_{\psi}+1)$ for some integer $\ell_{\psi}>0$. Then, for $R>0$, $$\mathrm{Var}(\psi;R) = \sum_{\phi \in \mathcal{B}} |h_R(\ell_{\phi})|^2 \left| \langle |\psi|^2, \phi \rangle \right|^2.$$
\end{proposition}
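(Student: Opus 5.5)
The plan is to view the inner average as a function of $w$ on $\mathcal{O}^{\times}\backslash S^2$, expand it spectrally, and then apply Parseval. Set
\[
F(w) := \frac{1}{\mathrm{vol}(B_R)} \int_{B_R(w)} |\psi(z)|^2 \, \mathrm{d}\sigma(z).
\]
By Lemma \ref{IntBallEqualInnerProd} applied to the spectral expansion of $|\psi|^2$, this is $F(w) = \langle |\psi|^2, K_R(\cdot,w)\rangle$. The function $F$ is continuous and $\mathcal{O}^{\times}$-invariant in $w$, so it lies in $L^2(\mathcal{O}^{\times}\backslash S^2)$ and Lemma \ref{SpectralRes} applies.

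The first step is to compute the spectral coefficients of $F$. Write $|\psi|^2 = \langle |\psi|^2, f_0\rangle f_0 + \sum_{\phi\in\mathcal{B}} \langle |\psi|^2,\phi\rangle \phi$ in $L^2$. Integrating against the bounded kernel $K_R(\cdot,w)$ is a continuous linear functional on $L^2$, so we may integrate term by term. Lemma \ref{IntBallEqualInnerProd} then gives
\[
F(w) = \langle |\psi|^2, f_0\rangle h_R(0) f_0 + \sum_{\phi\in\mathcal{B}} \langle |\psi|^2,\phi\rangle h_R(\ell_\phi)\phi(w).
\]
Since $k_R$ is a probability kernel and $P_0 = 1$, we have $h_R(0)=1$. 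Because $\psi$ is $L^2$-normalized, $\langle |\psi|^2, f_0\rangle f_0 = 1/\mathrm{vol}(\mathcal{O}^{\times}\backslash S^2)$. So the constant term is exactly the subtracted mean, and $F - 1/\mathrm{vol}(\mathcal{O}^{\times}\backslash S^2)$ has spectral coefficients $h_R(\ell_\phi)\langle|\psi|^2,\phi\rangle$ against $\phi\in\mathcal{B}$. A cleaner way to justify these coefficients is to use Fubini directly. One writes $\langle F,\phi\rangle = \iint |\psi(z)|^2 K_R(z,w)\overline{\phi(w)}\,\mathrm{d}\sigma(w)\,\mathrm{d}\sigma(z)$. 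The inner integral equals $h_R(\ell_\phi)\overline{\phi(z)}$ by Lemma \ref{IntBallEqualInnerProd}, using the symmetry $K_R(z,w)=K_R(w,z)$ (invariance of $d$ under the rotation group, and $\mathcal{O}^\times$ being a group) and the fact that $h_R$ is real.

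The second step is to apply Parseval's identity from Lemma \ref{SpectralRes} with $g_1=g_2=F-1/\mathrm{vol}(\mathcal{O}^{\times}\backslash S^2)$. This function is real-valued, so its square integrates to $\langle g_1,g_1\rangle$, and Parseval gives
\[
\mathrm{Var}(\psi;R) = \sum_{\phi\in\mathcal{B}} |h_R(\ell_\phi)|^2 |\langle|\psi|^2,\phi\rangle|^2.
\]
Convergence is automatic because $F\in L^2$. In the Hecke case the sum is in fact finite, since $\ell_\phi\le 2\ell_\psi$ by Corollary \ref{WatsonIchinoClassical}.

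The only point requiring care is the normalization of the kernel. $K_R$ carries the factor $\tfrac12$ and sums over all $24$ units, and $\pm1$ act trivially on $S^2$. Consequently $\langle\phi,K_R(\cdot,w)\rangle = \int_{S^2}\phi\, k_R(\cdot,w)$ exactly, as the proof of Lemma \ref{IntBallEqualInnerProd} shows. I would double-check this factor when verifying symmetry and the constant-term computation. Beyond that, the argument is routine.
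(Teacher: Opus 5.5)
Your proposal is correct and follows the paper's proof. Both expand $\langle |\psi|^2, K_R(\cdot,w)\rangle$ via Lemma \ref{SpectralRes} and Lemma \ref{IntBallEqualInnerProd}, identify the constant term as $1/\mathrm{vol}(\mathcal{O}^{\times}\backslash S^2)$ so that the centered average has coefficients $h_R(\ell_\phi)\langle|\psi|^2,\phi\rangle$, and then square and integrate using orthonormality of $\mathcal{B}$. Your Fubini computation of $\langle F,\phi\rangle$ followed by Parseval is a slightly more careful justification of the paper's termwise interchange of the double sum with the integral, not a different route.
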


\begin{proof}
By Lemmas \ref{SpectralRes} and \ref{IntBallEqualInnerProd}, we have 
\begin{align*}
    \langle |\psi|^2, K_R(\cdot, w) \rangle &= \langle |\psi|^2, f_0\rangle \langle f_0, K_R(\cdot, w)\rangle + \sum_{\phi \in \mathcal{B}} \langle |\psi|^2, \phi\rangle \langle \phi, K_R(\cdot, w)\rangle \\
    &= \frac{1}{\mathrm{vol}(\mathcal{O}^{\times} \backslash S^2 )} + \sum_{\phi \in \mathcal{B}} h_R(\ell_{\phi}) \phi(w) \langle |\psi|^2 , \phi \rangle.
\end{align*}
This equation implies, again by Lemma \ref{IntBallEqualInnerProd}, that $$ \frac{1}{\mathrm{vol}(B_R)} \int_{B_R(w)} |\psi(z)|^2 \: \mathrm{d}\sigma(z) - \frac{1}{\mathrm{vol}(\mathcal{O}^{\times} \backslash S^2)} = \sum_{\phi \in \mathcal{B}} h_R(\ell_{\phi}) \phi(w) \langle |\psi|^2 , \phi \rangle.$$ Since $\mathcal{B}$ is an orthonormal basis of $L^2(\mathcal{O}^{\times} \backslash S^2)$, squaring and integrating over $w$ we have
\begin{align*}
    \mathrm{Var}(\psi;R) &= \int_{\mathcal{O}^{\times} \backslash S^2} \left| \sum_{\phi \in \mathcal{B}} h_R(\ell_{\phi}) \phi(w) \langle |\psi|^2 , \phi \rangle \right|^2 \mathrm{d}\sigma(w) \\
    &= \int_{\mathcal{O}^{\times} \backslash S^2} \sum_{\phi_1, \phi_2 \in \mathcal{B}} h_R(\ell_{\phi_1}) \phi_1(w) \langle |\psi|^2 , \phi_1 \rangle \overline{h_R(\ell_{\phi_2}) \phi_2(w) \langle |\psi|^2 , \phi_2 \rangle} \: \mathrm{d}\sigma(w) \\
    &= \sum_{\phi_1, \phi_2 \in \mathcal{B}} h_R(\ell_{\phi_1}) \overline{h_R(\ell_{\phi_2})}  \langle |\psi|^2 , \phi_1 \rangle \overline{\langle |\psi|^2 , \phi_2 \rangle} \int_{\mathcal{O}^{\times} \backslash S^2} \phi_1(w) \overline{\phi_2(w)} \: \mathrm{d}\sigma(w) \\
    &= \sum_{\phi \in \mathcal{B}} |h_R(\ell_{\phi})|^2 \left| \langle |\psi|^2, \phi \rangle \right|^2. \qedhere
\end{align*}
\end{proof}

\begin{proof}[Proof of Proposition \ref{BoundsVariance}.]
By Corollary \ref{WatsonIchinoClassical}, Lemma \ref{LfuncsData}, Theorem \ref{AdjointBound}, GLH, and Proposition \ref{VarianceFormula}, we have that $$\mathrm{Var}(\psi;R) \ll_{\varepsilon} \ell_{\psi}^{-\frac{1}{2}+\varepsilon} \sum_{\substack{\phi \in \mathcal{B} \\ \ell_{\phi} \leq 2\ell_{\psi}}} \frac{|h_R(\ell_{\phi})|^2}{\ell_{\phi} (1+2\ell_{\psi} - \ell_{\phi})^{1/2}}.$$ We now separate the double sum into three ranges: $\ell_{\phi} \leq R^{-1}$, $R^{-1} \leq \ell_{\phi} \leq \ell_{\psi}$, and $\ell_{\psi} \leq \ell_{\phi} \leq 2\ell_{\psi}$. By Lemma \ref{boundsSHCTransform}, we obtain
\begin{align}
    \mathrm{Var}(\psi;R) &\ll_{\varepsilon} \ell_{\psi}^{-\frac{1}{2}+3\varepsilon} \sum_{\substack{\phi \in \mathcal{B} \\ \ell_{\phi} \leq R^{-1}}} \frac{1}{\ell_{\phi} (1+2\ell_{\psi} - \ell_{\phi})^{1/2}} + \ell_{\psi}^{3\delta-\frac{1}{2}+3\varepsilon} \sum_{\substack{\phi \in \mathcal{B} \\ R^{-1} \leq \ell_{\phi} \leq \ell_{\psi}}} \frac{1}{\ell_{\phi}^{4} (1+2\ell_{\psi} - \ell_{\phi})^{1/2}} \label{BoundsVariance-small-eq1} \\
    &\quad + \ell_{\psi}^{3\delta-\frac{1}{2}+3\varepsilon} \sum_{\substack{\phi \in \mathcal{B} \\ \ell_{\psi} \leq \ell_{\phi} \leq 2\ell_{\psi}}} \frac{1}{\ell_{\phi}^{4} (1+2\ell_{\psi} - \ell_{\phi})^{1/2}}. \nonumber
\end{align}

For the first sum on the right-hand side of \eqref{BoundsVariance-small-eq1}, we have that $1+2\ell_{\psi} - \ell_{\psi} \asymp \ell_{\psi}$ since $\ell_{\phi} \leq R^{-1} \ll \ell_{\psi}$. Hence, by the Weyl law, we obtain 
\begin{equation}
    \sum_{\substack{\phi \in \mathcal{B} \\ \ell_{\phi} \leq R^{-1}}} \frac{1}{\ell_{\phi} (1+2\ell_{\psi} - \ell_{\phi})^{1/2}} \ll \ell_{\psi}^{-\frac{1}{2}} \sum_{\ell \leq R^{-1}} 1 \ll R^{-1} \ell_{\psi}^{-\frac{1}{2}} \ll \ell_{\psi}^{\delta - \frac{1}{2}}. \label{BoundsVariance-small-eq2}
\end{equation}

For the second and third sums on the right-hand side of \eqref{BoundsVariance-small-eq1}, we follow the same idea as with the last two ranges in the proof of Theorem \ref{MainThm-AllCenters}: subdivide into dyadic ranges and use the Weyl law. Using that $0<\delta<1$, we obtain that both sums are $\ll_{\varepsilon, \delta} \ell_{\psi}^{-2\delta - \frac{1}{2}+\varepsilon}$. The result follows from combining these bounds together with \eqref{BoundsVariance-small-eq1} and \eqref{BoundsVariance-small-eq2}.
\end{proof}

\bibliographystyle{alpha}
\bibliography{references}

@article{ZelditchQE,
    author = {Zelditch, Steven},
    title = {Uniform distribution of eigenfunctions on compact hyperbolic surfaces},
    journal = {Duke Math. J.},
    year = {1987},
    volume = {55},
    number = {4}
}

@phdthesis{WatsonPhD,
    author = {Watson, Thomas C.},
    title = {Rankin {T}riple {P}roducts and {Q}uantum {C}haos},
    school = {Princeton {U}niversity},
    year = {2002}
}

@article{LindenstraussQUE,
    author = {Lindenstrauss, Elon},
    title = {Invariant {M}easures and {A}rithmetic {Q}uantum {U}nique {E}rgodicity},
    journal = {Annals of Mathematics},
    year = {2006},
    volume = {163},
    number = {2},
    pages = {165--219}
}

@article{SoundQUE,
    author = {Soundararajan, Kannan},
    title = {Quantum {U}nique {E}rgodicity for {$\mathrm{SL}_2(\mathbb{Z})\backslash \mathbb{H}$}},
    journal = {Annals of Mathematics},
    year = {2010},
    volume = {172},
    number = {2},
    pages = {1529--1538}
}

@article{ZelditchQES2,
    author = {Zelditch, Steven},
    title = {Quantum {E}rgodicity on the {S}phere},
    journal = {Commun. Math. Phys.},
    year = {1992},
    volume = {146},
    pages = {61--71}
}

@article{BSP,
    author = {Böcherer, Siegfried and Sarnak, Peter and Schulze-Pillot, Rainer},
    title = {Arithmetic and {E}quidistribution of {M}easures on the {S}phere},
    journal = {Commun. Math. Phys.},
    volume = {242},
    year = {2003},
    pages = {67--80}
}

@article{IchinoFormula,
    author = {Ichino, Atsushi},
    title = {Trilinear forms and the central values of triple product {$L$}-functions},
    journal = {Duke Math. J.},
    year = {2008},
    volume = {145},
    number = {2},
    pages = {281--307}
}

@article{Selberg1956, 
    title = {Harmonic Analysis and Discontinuous Groups in Weakly Symmetric Riemannian Spaces With Applications to Dirichlet Series},
    volume = {20},
    number = {1--3}, 
    journal = {J. Indian Math. Soc.}, 
    author = {Selberg, A.}, 
    year = {1956}, 
    pages = {47–-87}
}

@article{HumphriesRadziwill,
    author = {Humphries, Peter and Radziwiłł, Maksym},
    title = {Optimal Small Scale Equidistribution of Lattice Points on the Sphere, Heegner Points, and Closed Geodesics},
    journal = {Communications on Pure and Applied Mathematics},
    volume = {75},
    number = {9},
    pages = {1936--1996},
    doi = {https://doi.org/10.1002/cpa.22076},
    year = {2022}
}

@book{HallLieStuff,
    author = {Hall, Brian C.},
    title = {Lie {G}roups, {L}ie {A}lgebras, and {R}epresentations: {A}n {E}lementary {I}ntroduction},
    publisher = {Springer},
    year = {2015}
}

@book{AutRepGetz,
    author = {Getz, Jayce R. and Hahn, Heekyoung},
    title = {An {I}ntroduction to {A}utomorphic {R}epresentations},
    publisher = {Springer},
    year = {2024}
}

@book{QuantumTheoryforMath,
    author = {Hall, Brian C.},
    title = {Quantum {T}heory for {M}athematicians},
    publisher = {Springer},
    year = {2013}
}

@book{MorimotoSphere,
    author = {Morimoto, Mitsuo},
    title = {Analytic {F}unctionals on the {S}phere},
    publisher = {American Mathematical Society},
    series = {Translations of {M}athematical {M}onographs},
    volume = {178},
    year = {1998}
}

@article{CruzanProdsSpherHarms,
    author = {Cruzan, Orval R.},
    title = {Translational addition theorems for spherical vector wave functions},
    journal = {Quart. Appl. Math.},
    year = {1962},
    volume = {20},
    pages = {33--40}
}

@book{Voight,
    author = {Voight, John},
    title = {Quaternion Algebras},
    publisher = {Springer},
    year = {2021}
}

@article{AdjointBound,
    author = {Hoffstein, Jeffrey and Lockhart, Paul},
    title = {Coefficients of {M}aass {F}orms and the {S}iegel {Z}ero},
    journal = {Annals of Mathematics},
    year = {1994},
    volume = {140},
    number = {1}
}

@book{IwaniecKowalski,
    author = {Iwaniec, H. and Kowalski, E.},
    title = {Analytic {N}umber {T}heory},
    publisher = {American Mathematical Society},
    year = {2004}
}

@inproceedings{NTBTate,
    author = {Tate, J.},
    title = {Number {T}heoretic {B}ackground},
    booktitle = {Automorphic forms, representations, and {$L$}--functions, part {II}},
    editor = {Borel, A. and Casselman, W.},
    year = {1979},
    volume = {33},
    pages = {3--26}
}

@incollection{IntroLanglands,
    author = {Cogdell, J. W.},
    title = {Langlands {C}onjectures for {$\mathrm{GL}_n$}},
    booktitle = {An {I}ntroduction to the {L}anglands {P}rogram},
    editor = {Bernstein, J. and Gelbart, S.},
    publisher = {Springer},
    year = {2004},
    pages = {229--250}
}

@book{localLanglandsGL2,
    author = {Bushnell, C. and Henniart, G.},
    title = {The {L}ocal {L}anglands {C}onjecture for {$\mathrm{GL}(2)$}},
    publisher = {Springer},
    year = {2006}
}

@inproceedings{KnappLLCArchimedean,
    author = {Knapp, A. W.},
    title = {Local {L}anglands {C}orrespondence: {T}he {A}rchimedean {C}ase},
    booktitle = {Motives, {P}art {II}},
    year = {1994},
    editor = {Jannsen, Uwe and Kleiman, Steven and Serre, Jean-Pierre},
    publisher = {American Mathematical Society}
}

@book{GelbartAutFormsAdeleGps,
    author = {Gelbart, Stephen S.},
    title = {Automorphic {F}orms on {A}dele {G}roups},
    publisher = {Princeton University Press},
    year = {1975}
}

@incollection{BroConOes,
    author = {Conrad, Brian},
    title = {Reductive group schemes},
    booktitle = {Autour des sch{\'e}mas en groupes, {V}ol. {I}},
    publisher = {Soci{\'e}t{\'e} math{\'e}matique de {F}rance},
    year = {2014}
}

@article{HumphriesKhan,
    author = {Humphries, Peter and Khan, Rizwanur},
    title = {On the {R}andom {W}ave {C}onjecture for {D}ihedral {M}aa{\textbeta} {F}orms},
    journal = {Geom. Funct. Anal.},
    year = {2020},
    volume = {30},
    pages = {34--125}
}

@article{ShnirelmanQE,
    author = {Shnirel'man, A. I.},
    title = {Ergodic properties of eigenfunctions},
    journal = {Uspekhi Math. Nauk},
    year = {1974},
    volume = {29},
    number = {6},
    pages = {181--182}
}

@inbook{Shnirelman1993,
    author = {Shnirel'man, A. I.},
    title = {Addendum On the Asymptotic Properties of Eigenfunctions in the Regions of Chaotic Motion},
    booktitle = {KAM Theory and Semiclassical Approximations to Eigenfunctions},
    year = {1993},
    publisher = {Springer Berlin Heidelberg},
    pages = {313--337}
}

@article{deVerdiere85,
    author = {Colin de Verdière, Y.},
    title = {Ergodicit{\'e} et fonctions propres du laplacien},
    journal = {Comm. Math. Physics},
    year = {1985},
    volume = {102},
    pages = {497--502}
}

@article{RudnickSarnakQUE,
    author = {Rudnick, Ze{\'e}v and Sarnak, Peter},
    title = {{The behaviour of eigenstates of arithmetic hyperbolic manifolds}},
    volume = {161},
    journal = {Communications in Mathematical Physics},
    number = {1},
    pages = {195--213},
    year = {1994},
}

@article{HumphriesPhD,
    author = {Humphries, Peter},
    title = {Equidistribution in shrinking sets and {$L^4$}-norm bounds for automorphic forms},
    journal = {Mathematische Annalen},
    year = {2018},
    volume = {371},
    pages = {1497--1543}
}

@article{JakobsonQUEEisenstein,
     author = {Jakobson, Dmitry},
     title = {Quantum unique ergodicity for {Eisenstein} series on {$\mathrm{PSL}_2(\mathbb{Z}) \backslash \mathrm{PSL}_2(\mathbb{R})$}},
     journal = {Annales de l'Institut Fourier},
     pages = {1477--1504},
     volume = {44},
     number = {5},
     year = {1994}
}

@article{LuoSarnakQUE,
    author = {Luo, Wenzhi and Sarnak, Peter},
    title = {Quantum ergodicity of {E}igenfunctions on {$\mathrm{PSL}_{2}(\mathbb{Z}) \backslash \mathbb{H}^2$}},
    journal = {Publications math{\'e}matiques de l'I.H.{\'E}.S.},
    year = {1995},
    volume = {81},
    pages = {207--237}
}

@article{SarnakProgressQUE,
    author = {Sarnak, Peter},
    title = {Recent progress on the {Q}uantum {U}nique {E}rgodicity conjecture},
    journal = {Bull. of the American Math. Society},
    year = {2011},
    volume = {8},
    number = {2},
    pages = {211--228}
}

@book{ConwaySmith,
    author = {Conway, John H. and Smith, Derek A.},
    title = {On {Q}uaternions and {O}ctonions: {T}heir {G}eometry, {A}rithmetic, and {S}ymmetry},
    publisher = {A K Peters},
    year = {2003}
}

@book{Iwaniec,
    author = {Iwaniec, Henryk},
    title = {Topics in Classical Automorphic Forms},
    publisher = {American Mathematical Society},
    year = {1997}
}

@inproceedings{EichlerHeckeOp,
    author = {Eichler, M.},
    title = {The {B}asis {P}roblem for {M}odular {F}orms and the {T}races of the {H}ecke {O}perators},
    booktitle = {Modular Functions of One Variable I},
    editor = {Kuyk, Willem},
    year = {1972},
    pages = {75--152},
    publisher = {Springer}
}

@book{MontgomeryVaughan,
    author = {Montgomery, Hugh L. and Vaughan, Robert C.},
    title = {Multiplicative {N}umber {T}heory: {I}. {C}lassical {T}heory},
    publisher = {Cambridge University Press},
    year = {2006}
}

@article{HejhalRackner,
    author = {Hejhal, Dennis A. and Rackner, Barry N.},
    title = {On the {T}opography of {M}aass {W}aveforms for {PSL(2,Z)}},
    journal = {Experimental Mathematics},
    volume = {1},
    number = {4},
    pages = {275--305},
    year = {1992},
    publisher = {Taylor \& Francis},
    doi = {10.1080/10586458.1992.10504562}
}

@book{FollandHarmonic,
    author = {Folland, Gerald B.},
    title = {A Course in Abstract Harmonic Analysis},
    publisher = {CRC Press},
    edition = {{$2^{\circ}$}},
    year = {2015}
}

@misc{garza2025geodesicrestrictionproblemarithmetic,
    title = {The {G}eodesic {R}estriction {P}roblem for {A}rithmetic {S}pherical {H}armonics}, 
    author = {Sanchez Garza, Maximiliano},
    year = {2025},
    eprint = {2509.24874},
    archivePrefix = {arXiv},
    primaryClass = {math.NT},
    url = {\url{https://arxiv.org/abs/2509.24874}}, 
}

@article{LocalComponents,
    author = {Loeffler, David and Weinstein, Jared},
    title = {On the computation of local components of a newform},
    journal = {Math. Comp.},
    volume = {81},
    pages = {1179--1200},
    year = {2012}
}

@book{WeilAdelesAlgGroups,
    author = {Weil, Andr{\'e}},
    title = {Adeles and {A}lgebraic {G}roups},
    publisher = {Birkh{\"a}user},
    year = {1982},
    note = {With appendices by Michel Demazure and Takashi Ono}
}

@misc{Humphries2025,
    title = {Quantitative {E}quidistribution on {H}yperbolic {S}urfaces and {A}rithmetic {A}pplications}, 
    author = {Humphries, Peter},
    year = {2025},
    eprint = {2512.15664},
    archivePrefix = {arXiv},
    primaryClass = {math.NT},
    url = {\url{https://arxiv.org/abs/2512.15664}}, 
}

@book{SpheHar,
    author = {Atkinson, Kendall and Han, Weimin},
    title = {Spherical Harmonics and Approximations on the Unit Sphere: An Introduction},
    publisher = {Springer},
    year = {2012}
}

@book{VillaniOptTransport,
    title = {Topics in {O}ptimal {T}ransportation},
    author = {Villani, C{\'e}dric},
    publisher = {American Mathematical Society},
    year = {2003},
    series = {Graduate {S}tudies in {M}athematics 58}
}

@article{YoungQUEShrinking,
    author = {Young, Matthew P.},
    title = {The quantum unique ergodicity conjecture for thin sets},
    journal = {Advances in {M}athematics},
    year = {2016},
    volume = {286},
    pages = {958--1016}
}

@article{GrabnerTichy-Wasserstein,
    author = {Grabner, Peter J. and Tichy, Robert F.},
    journal = {Mathematics of Computation},
    number = {201},
    pages = {327--336},
    publisher = {American Mathematical Society},
    title = {Spherical {D}esigns, {D}iscrepancy and {N}umerical {I}ntegration},
    volume = {60},
    year = {1993}
}

@misc{BordaCuenin-Wasserstein,
    title = {Smoothing inequalities for transport metrics in compact spaces}, 
    author = {Borda, Bence and Cuenin, Jean-Claude},
    year = {2025},
    eprint = {2510.21380},
    archivePrefix = {arXiv},
    primaryClass = {math.CA},
    url = {https://arxiv.org/abs/2510.21380}, 
}

@article{Berry-Wasserstein,
    author = {Berry, Andrew C.},
    title = {The accuracy of the {G}aussian approximation to the sum of independent variates},
    journal = {Trans. Amer. Math. Soc.},
    year = {1941},
    volume = {49},
    pages = {122--136}
}

@article{Esseen-Wasserstein,
    author = {Esseen, Carl-Gustav},
    title = {Fourier analysis of distribution functions. {A} mathematical study of the {L}aplace--{G}aussian law},
    journal = {Acta Math.},
    year = {1945},
    volume = {77},
    pages = {1--125}
}

@article{BobkovLedoux-Wasserstein,
    author = {Bobkov, Sergey G. and Ledoux, Michel},
    title = {A {S}imple {F}ourier {A}nalytic {P}roof of the {AKT} {O}ptimal {M}atching {T}heorem},
    journal = {The Annals of Applied Probability},
    year = {2021},
    volume = {31},
    number = {6},
    pages = {2567--2584}
}

@article{Borda-Wasserstein,
    author = {Borda, Bence},
    title = {Equidistribution of {R}andom {W}alks on {C}ompact {G}roups {II}. {T}he {W}asserstein {M}etric},
    journal = {Journal of Fourier Analysis and Applications},
    year = {2021},
    volume = {27},
    number = {13}
}

@article{MartinDimensionsNew,
    author = {Martin, Greg},
    title = {Dimensions of the spaces of cusp forms and newforms on {$\Gamma_0(N)$} and {$\Gamma_1(N)$}},
    journal = {J. Number Theory},
    year = {2005},
    volume = {112},
    pages = {298--331}
}

@article{SpectrumLaplacian,
    author = {Schlichtkrull, Henrik and Trapa, Peter and Vogan, Jr., David A.},
    title = {Laplacians on {S}pheres},
    journal = {Sao Paulo J. Math. Sci.},
    year = {2018},
    volume = {12},
    number = {2},
    pages = {295--358}
}

@book{SteinWeiss,
    author = {Stein, Elias M. and Weiss, Guido},
    title = {Introduction to Fourier Analysis on Euclidean Spaces},
    publisher = {Princeton University Press},
    year = {1971}
}

@article{MinakshisundaramPleijelLocalWeylLaw, 
    title = {Some {P}roperties of the {E}igenfunctions of {T}he {L}aplace-{O}perator on {R}iemannian {M}anifolds}, 
    volume = {1}, 
    number = {3}, 
    journal = {Canadian Journal of Mathematics}, 
    author = {Minakshisundaram, S. and Pleijel, {\AA}.}, 
    year = {1949}, 
    pages = {242-–256}
}

@article{LubPhiSarI,
    author = {Lubotzky, A. and Phillips, R. and Sarnak, P.},
    title = {Hecke operators and distributing points on {$S^2$}. {I}},
    journal = {Communications on Pure and Applied Mathematics},
    volume = {39},
    number = {S1},
    pages = {S149--S186},
    year = {1986}
}

@article{VanderKam,
    author = {VanderKam, Jeffrey M.},
    title = {{$L^\infty$} {N}orms and {Q}uantum {E}rgodicity on the {S}phere},
    journal = {International Mathematics Research Notices},
    volume = {7},
    year = {1997},
    pages = {329--347}
}

@article{BLLQE,
    author = {Brooks, Shimon and Le Masson, Etienne and Lindenstrauss, Elon},
    title = {Quantum Ergodicity and Averaging Operators on the Sphere},
    journal = {International Mathematics Research Notices},
    volume = {2016},
    number = {19},
    pages = {6034--6064},
    year = {2015}
}

@article{HumphriesThorner,
    author = {Humphries, Peter and Thorner, Jesse},
    title = {New Variants of Arithmetic Quantum Ergodicity},
    journal = {Commun. Math. Phys.},
    year = {2025},
    volume = {406},
    number = {59}
}

@misc{WoodburyLocalConst,
    author = {Woodbury, Michael},
    title = {Explicit trilinear forms and subconvexity of the tiple product {$L$}-function},
    url = {\url{https://www.mi.uni-koeln.de/~woodbury/research/trilinear.pdf}},
    year = {2012}
}

\end{document}